\newcommand{\intav}[1]{\mathchoice {\mathop{\vrule width 6pt height 3 pt depth  -2.5pt
\kern -8pt \intop}\nolimits_{\kern -6pt#1}} {\mathop{\vrule width
5pt height 3  pt depth -2.6pt \kern -6pt \intop}\nolimits_{#1}}
{\mathop{\vrule width 5pt height 3 pt depth -2.6pt \kern -6pt
\intop}\nolimits_{#1}} {\mathop{\vrule width 5pt height 3 pt depth
-2.6pt \kern -6pt \intop}\nolimits_{#1}}}
\newtheorem{proposition}{Proposition}[section]
\newtheorem{lemma}{Lemma}[section]
\newtheorem{theorem}{Theorem}[section]
\newtheorem{remark}{Remark}[section]
\newtheorem{corollary}{Corollary}[section]
\newtheorem{definition}{Definition}[section]
\newtheorem{statement}{Statement}[section]
\numberwithin{equation}{section}
\title{\bf Lipschitz regularity of almost-minimizers for vectorial Alt-Caffarelli functionals in Orlicz spaces}
\author{Pedro Fellype Pontes\footnote{\noindent Zhejiang Normal University. School of Mathematical Sciences, Jinhua 321004 - People’s Republic of China. \noindent \texttt{E-mail address: fellype.pontes@gmail.com}}, \,\,\,\,\, Jo\~{a}o Vitor da Silva\footnote{\noindent Universidade Estadual de Campinas. Departamento de Matem\'{a}tica. Campinas, SP-Brazil 13083-859. \noindent \texttt{E-mail address: jdasilva@unicamp.br}}, \\  $\&$  \\Minbo Yang\footnote{\noindent  Corresponding author. Zhejiang Normal University.
School of Mathematical Sciences,
Jinhua 321004 - People's Republic of China.
 \noindent \texttt{E-mail address: mbyang@zjnu.edu.cn}}}
\date{Jan 2026}
\begin{document}

\maketitle

\begin{abstract}
For a fixed constant $\lambda > 0$ and a bounded Lipschitz domain $\Omega \subset \mathbb{R}^n$ with $n \geq 2$, we establish that almost-minimizers (functions satisfying a sort of variational inequality) of the Alt-Caffarelli  type functional 
\[
\mathcal{J}_G({\bf v};\Omega) \coloneqq \int_\Omega  \left(\sum_{i=1}^mG\big(|\nabla v_i(x)|\big) + \lambda \chi_{\{|{\bf v}|>0\}}(x)\right) dx ,
\]
where ${\bf v} = (v_1, \dots, v_m)$ and $m \in \mathbb{N}$, exhibit optimal Lipschitz continuity on compact subsets of $\Omega$, where $G$ is an $\mathcal{N}$-function satisfying specific growth conditions. Furthermore, we obtain universal gradient estimates for non-negative almost-minimizers in the interior of non-coincidence sets. %{\color{blue}Furthermore, under the additional convexity assumption on $G$, we address the problem of boundary Lipschitz regularity for $v$ by adopting a fundamentally different analytical approach.} Notably, this method also provides an alternative proof of the optimal local Lipschitz regularity in the domain's interior. 
Our work extends the recent regularity results for weakly coupled vectorial almost-minimizers for the $p$-Laplacian addressed in \cite{BFS24}, and even the scalar case treated in \cite{daSSV}, \cite{DiPFFV24} and \cite{PelegTeix24}, thereby providing new insights and approaches applicable to a variety of non-linear one or two-phase free boundary problems with non-standard growth.
\end{abstract}

\medskip
\noindent \textbf{Keywords}: Lipschitz regularity estimates; Almost-minimizers; Alt-Caffarelli functionals in Orlicz spaces
\vspace{0.2cm}
	
\noindent \textbf{AMS Subject Classification: Primary  35B65; 35J60; 35R35; Secondary 35J20; 49N60.  
}

\section{Introduction}\label{Sec1}

Let $\Omega \in \mathbb{R}^n$ be a bounded Lipschitz domain, with $n \ge 2$, $m \in \mathbb{N}$, $\lambda>0$ a fixed constant, and $G: [0,+\infty) \to [0,+\infty)$ an $\mathcal{N}$-function, we will deal with almost-minimizers of the Alt-Caffarelli type functional 
\begin{equation}\label{DefFunctional}
    \mathcal{J}_G({\bf v};\Omega) \coloneqq \int_\Omega  \left(\sum_{i=1}^mG\big(|\nabla v_i(x)|\big) + \lambda \chi_{\{|{\bf v}|>0\}}(x)\right) dx, 
\end{equation}
over the class
    $$\mathcal{K} \coloneqq \Big\{ {\bf v} \in W^{1,G}(\Omega;\mathbb{R}^m) \; : \; {\bf v} = {\bf g} \; \mbox{on} \; \partial \Omega \; \mbox{and} \; v_i \ge 0 \Big\}. $$
In this case, we consider ${\bf v} = (v_1, \dots, v_m)$, $|{\bf v}| = \sqrt{(v_1)^2 + \cdots + (v_m)^2}$, and ${\bf g} = (g_1, \dots, g_m)$, $0 \le g_i \in W^{1,G}(\Omega)$. 

Furthermore, we would like to point out that we understand as (local) \underline{$(\kappa,\beta)$-almost-minimizer} for $\mathcal{J}_G$ in $\Omega$, with constant $\kappa\le\kappa_0$ and exponent $0<\beta<1$, a vectorial function ${\bf u} = (u_1, \dots, u_m)$ satisfying
\begin{equation}\label{VarIneqAlmMin}
     \mathcal{J}_G({\bf u};B_r(x_0)) \le \big(1+\kappa r^\beta\big)  \mathcal{J}_G({\bf v};B_r(x_0)),
\end{equation}
for any ball $B_r(x_0)$, with $0<r<1$, such that $\overline{B_r(x_0)} \subset \Omega$, and any ${\bf v} \in W^{1,G}(B_r(x_0);\mathbb{R}^m)$ such that ${\bf u} = {\bf v}$ on $\partial B_r(x_0)$. 

It is important to clarify the restriction to the case $0 < \beta < 1$ in the definition of $(\kappa,\beta)$-almost-minimizer. Since we are interested in establishing local regularity estimates, we consider balls $B_r(x_0) \subset \Omega$ with radii $0 < r < 1$. Within such a setting, any function that is a $(\kappa,\beta_1)$-almost-minimizer for some $\beta_1 \ge 1$ automatically satisfies the $(\kappa,\beta)$-almost-minimizing condition for all exponents $0 < \beta < 1$. Consequently, the class of $(\kappa,\beta)$-almost-minimizers with $0 < \beta < 1$ naturally includes all cases with higher exponents, provided we restrict our attention to sufficiently small exponents.

We emphasize that, from a geometric perspective, the function $\psi(r)=\kappa r^{\beta}$ quantifies the deviation from minimality of almost-minimizers of~\eqref{VarIneqAlmMin} relative to weak solutions of the Euler--Lagrange equation associated with a vectorial $g$-Laplacian operator (that is, variational minimizers). This viewpoint parallels classical ideas in the regularity theory of oriented boundaries in $\mathbb{R}^n$ that are almost minimal for the area functional, namely sets satisfying
\[
\mathrm{Per}(\partial E \cap A, B_r(x))
\le
\mathrm{Per}(\partial F \cap A, B_r(x))
+ \mathtt{``small\ error"}(n,r),
\qquad (F \Delta E \Subset A),
\]
as developed by Tamanini in the 1980s. We refer the interested reader to~\cite{T} for an enlightening survey on this topic.

\bigskip

To present the main results of this paper, we need two definitions. For more details, we refer the reader to Section \ref{Sec2}.

\begin{definition}[{\bf $\mathcal{N}$-function}]\label{Def-N-function}
    A continuous function $G : [0,+\infty) \rightarrow [0,+\infty)$ is an $\mathcal{N}$-function if:
	\begin{itemize}
		\item[$(i)$] $G$ is convex;
		\item[$(ii)$] $G(t) = 0$ if, and only if, $t = 0$;
		\item[$(iii)$] $\displaystyle\lim_{t\rightarrow0}\frac{G(t)}{t}=0$ and $\displaystyle\lim_{t\rightarrow+\infty}\frac{G(t)}{t}= +\infty$.
	\end{itemize}
\end{definition} 

\begin{definition}[{\bf Non-degenerate classes - \cite{BM}}]\label{defclasses} 
Let $G: [0,+\infty) \to [0,+\infty)$ be an $\mathcal{N}-$function such that
\begin{equation}\label{Lieberman-Cond01}
G^{\prime}(t)=g(t) \quad \text{for a function} \quad g \in C^0([0,+\infty))\cap C^1((0,+\infty)).
\end{equation}
and for $0 < \delta \leq  g_0$ fixed parameters,
\begin{equation}\label{Ga}
		0<\delta\le \dfrac{g^{\prime}(s)s}{g(s)} \le g_0.
\end{equation}
Furthermore, for any $t>0$ and $\varpi>0$ suppose the following control
    \begin{equation}\label{Qg}
        \int_{t}^{t+\varpi} |\mathcal{Q}^{\prime}_g(s)| \, ds  \le \dfrac{\mathrm{C}_0}{t^{\nu}} \cdot \varpi^{\nu},
    \end{equation}
where $\mathcal{Q}_g(s)=\frac{g^{\prime}(s)s}{g(s)}$ and $\mathrm{C}_0>0$ depends only on $\delta,g_0$, and $\nu$. In this context, we may define, for any $\varepsilon_0>0$, the following non-degenerate classes:

$$
\left\{\begin{array}{l}
     \mathcal{G}(\delta,g_0) \coloneqq \big\{
G :[0,\infty) \to [0,\infty)\; :\; G \ \text{is an $\mathcal{N}$-function satisfying \eqref{Lieberman-Cond01} and \eqref{Ga}}\big\};  \\
     \mathcal{G}_{\nu}(\delta,g_0) \coloneqq \big\{
G :[0,\infty) \to [0,\infty)\; :\; G\in \mathcal{G}(\delta,g_0)  \ \text{and satisfies \eqref{Qg}}\big\}; \\
    \mathcal{G}(\delta,g_0,\rho_0) \coloneqq \big\{
G \in \mathcal{G}(\delta,g_0)\; :\; G(1)\ge \rho_0\big\}; \\
    \mathcal{G}_{\nu}(\delta,g_0,\rho_0) \coloneqq \big\{
G \in \mathcal{G}_{\nu}(\delta,g_0)\; :\; G(1) \ge \rho_0\big\}.
\end{array}\right.
$$
\end{definition}

\subsection{Main regularity results}

Our first result establishes a universal H\"{o}lder continuous estimate (across the free boundary) for $(\kappa,\beta)$-almost-minimizers to \eqref{VarIneqAlmMin}.

\begin{theorem}[{\bf Local H\"{o}lder regularity of almost-minimizers}]\label{Holdereg}
    Assume that $G\in \mathcal{G}(\delta,g_0)$. Consider ${\bf u}=(u_1,\dots, u_m)$ a $(\kappa,\beta)$-almost-minimizer of $\mathcal{J}_G$ in $\Omega$, with some positive constant $\kappa \le \kappa_0$ and exponent $0<\beta<1$. Then, ${\bf u} \in C_{loc}^{0,\alpha}(\Omega; \mathbb{R}^m)$, for any $0<\alpha \le \frac{\delta}{g_0}$. More precisely, for any $\Omega^{\prime} \Subset \Omega$, there exists a constant $\mathrm{C}=\mathrm{C}(n,m,\beta,\kappa_0,\delta,g_0)>0$ such that
        $$
        \| {\bf u}\|_{C^{0,\alpha}(\Omega^{\prime};\mathbb{R}^m)} \le \mathrm{C}G^{-1}\left( \sum_{i=1}^m \xi_1\Big(|\nabla u_i|_{L_G(\Omega)}\Big)+\lambda\right).
        $$
\end{theorem}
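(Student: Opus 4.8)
The plan is to derive the Hölder estimate from a Morrey-type decay estimate for the Orlicz energy, obtained by a comparison argument followed by iteration. First I would fix a ball $B_r(x_0) \Subset \Omega$ with $0 < r < 1$ and, for each component $u_i$, introduce the $G$-harmonic replacement $h_i$ in $B_r(x_0)$, i.e. the minimizer of $w \mapsto \int_{B_r(x_0)} G(|\nabla w|)\,dx$ with boundary datum $u_i$ on $\partial B_r(x_0)$, and set ${\bf h} = (h_1,\dots,h_m)$. The standard decay estimates for $G$-harmonic functions (from the Lieberman theory under \eqref{Ga}, with exponents governed by $\delta$ and $g_0$) give, for $0 < \rho < r$,
\[
\int_{B_\rho(x_0)} G(|\nabla h_i|)\,dx \;\le\; \mathrm{C}\left(\frac{\rho}{r}\right)^{n}\int_{B_r(x_0)} G(|\nabla h_i|)\,dx,
\]
together with the energy-minimality inequality $\int_{B_r(x_0)} G(|\nabla h_i|)\,dx \le \int_{B_r(x_0)} G(|\nabla u_i|)\,dx$.

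Next I would use the almost-minimality \eqref{VarIneqAlmMin}, testing with the competitor ${\bf v}$ equal to ${\bf h}$ inside $B_r(x_0)$ (note ${\bf h}$ may fail $h_i \ge 0$, so one actually compares against ${\bf h}^+$ componentwise, which only decreases the Orlicz energy by convexity and monotonicity of $G$, and does not increase the measure term). This yields
\[
\sum_{i=1}^m \int_{B_r(x_0)} G(|\nabla u_i|)\,dx \;\le\; (1+\kappa r^\beta)\sum_{i=1}^m \int_{B_r(x_0)} G(|\nabla h_i|)\,dx + (1+\kappa r^\beta)\,\lambda\,\omega_n r^n,
\]
and combining with the energy-minimality of $h_i$ one extracts a Caccioppoli-type comparison $\sum_i \int_{B_r} G(|\nabla(u_i - h_i)|)\,dx \le \mathrm{C}\big(\kappa r^\beta \sum_i \int_{B_r} G(|\nabla u_i|)\,dx + \lambda r^n\big)$, using a quasi-triangle inequality for $G$ (consequence of the $\Delta_2$ condition implied by $g_0 < \infty$) together with \eqref{ine}. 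Setting $\Phi(\rho) \coloneqq \sum_i \int_{B_\rho(x_0)} G(|\nabla u_i|)\,dx$, the two displays above combine into the iteration inequality
\[
\Phi(\rho) \;\le\; \mathrm{C}\left[\left(\frac{\rho}{r}\right)^n + \kappa r^\beta\right]\Phi(r) + \mathrm{C}\,\lambda\, r^n,
\]
valid for $0 < \rho < r < r_0$.

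Then a standard iteration lemma (e.g. Giaquinta's) applied to this inequality gives, for every $\sigma \in (0,n)$, the decay $\Phi(\rho) \le \mathrm{C}\,\rho^{n-\sigma}\big(\Phi(r_0)r_0^{\sigma - n} + \lambda\big)$ on $B_{r_0}(x_0)$; choosing $\sigma$ so that $n - \sigma$ matches the desired Hölder exponent (the admissible range $0 < \alpha \le \delta/g_0$ is dictated by the $G$-harmonic decay rate, which under \eqref{Ga} behaves like that of the $p$-Laplacian with $p = 1 + \delta$ near the relevant scale) and feeding this back through the Orlicz-Sobolev–Morrey embedding — concretely, bounding oscillations via $|u_i(x) - u_i(y)| \lesssim \int_{B} |\nabla u_i|$ and then $G$-Jensen to convert $\int G(|\nabla u_i|)$-decay into pointwise Hölder control, which is where the function $\xi_1$ and the term $G^{-1}(\cdots)$ on the right-hand side appear — one obtains the claimed estimate, with the passage from a fixed $B_{r_0}$ to arbitrary $\Omega' \Subset \Omega$ handled by a covering argument and absorbing $|\nabla u_i|_{L_G(\Omega)}$ via \eqref{VarIneqAlmMin} tested against a global competitor.

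The main obstacle I anticipate is the comparison step in Orlicz generality: unlike the power case $G(t) = t^p$, one cannot simply expand $|\nabla u_i|^p - |\nabla h_i|^p$ and use a clean algebraic inequality, so one needs the quantitative strict convexity of $G$ (the left inequality in \eqref{Ga}, giving a lower bound $G(a) - G(b) - g(b)(a-b) \gtrsim$ a shifted Orlicz modulus of $a - b$) to absorb the cross term $\int g(|\nabla h_i|)\nabla h_i \cdot \nabla(u_i - h_i) = 0$ and obtain the Caccioppoli inequality for $u_i - h_i$ in the correct ($G$-shifted) norm, then convert it back to the unshifted Orlicz energy at the cost of controllable constants depending only on $\delta, g_0$. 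The role of condition \eqref{Qg} with exponent $\nu$ does not seem essential for this first theorem (it will be needed later for the sharp Lipschitz bound), so I would keep the argument within $\mathcal{G}(\delta,g_0)$ as stated.
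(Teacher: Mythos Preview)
Your proposal is correct and follows essentially the same route as the paper: $g$-harmonic replacement on $B_r(x_0)$, comparison via almost-minimality, the decay $\int_{B_\rho}G(|\nabla h_i|)\le C(\rho/r)^n\int_{B_r}G(|\nabla h_i|)$, the iteration inequality $\Phi(\rho)\le C[(\rho/r)^n+\kappa r^\beta]\Phi(r)+C\lambda r^n$, and a Campanato-type embedding (the paper's Lemma~\ref{lemreg}). The paper handles your ``shifted Orlicz modulus'' obstacle by working with the map $\mathbf{V}_G(z)=(G'(|z|)/|z|)^{1/2}z$: Lemma~\ref{controlVg} gives $\int_{B_r}|\mathbf{V}_G(\nabla u_i)-\mathbf{V}_G(\nabla u_i^*)|^2\le C\kappa r^\beta\Phi(r)+C\lambda r^n$, and then one uses $G(|a|)\lesssim |\mathbf{V}_G(a)-\mathbf{V}_G(b)|^2+G(|b|)$ (via $|\mathbf{V}_G(a)|^2\simeq G(|a|)$) to split $\Phi(\tau r)$ directly. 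One caution: your phrase ``convert it back to the unshifted Orlicz energy at the cost of controllable constants'' is not literally available---for $\delta<1$ one cannot bound $G(|a-b|)$ by $G_{|b|}(|a-b|)$ with a universal constant---but this conversion is never needed, since the iteration runs with the $\mathbf{V}_G$-difference (equivalently the shifted modulus) in place of $G(|\nabla(u_i-h_i)|)$.
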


\medskip

Now, considering the set
    $$P_{\bf u} \coloneqq \bigcup_{i=1}^m \{u_i>0\},$$
we understand the free boundary of an $(\kappa,\beta)$-almost-minimizer ${\bf u}$ by
    $$\mathfrak{F}(\mathbf{u}) \coloneqq \partial P_{\bf u} \cap \Omega.$$
We address a H\"{o}lder gradient estimate (far away from the free boundary). For this purpose, it is necessary to strengthen our assumptions on the ellipticity constants $\delta$ and $g_0$. The statement of the result is given below.

\begin{theorem}[{\bf $C^{1, \alpha}$ of almost-minimizers}]\label{gradHold}
     Suppose $G\in \mathcal{G}(\delta,g_0)$, and let ${\bf u} = (u_1, \dots, u_m)$ be a $(\kappa,\beta)$-almost-minimizer of $\mathcal{J}_G$ in $\Omega$, with some positive constant $\kappa \le \kappa_0$ and exponent $0<\beta<1$. Additionally, suppose that $g_0=\delta+\varepsilon$, for some given $\varepsilon = \varepsilon(n,m,\beta,\delta,g_0)\ge0$. Then ${\bf u}$ exhibits local $C^{1,\gamma}$-regularity in $P_{\bf u}$. More precisely, for any open set $\Omega^{\prime} \Subset P_{\bf u}$, there exists an exponent $\gamma = \gamma(\delta, g_0, n, \beta) > 0$ and a constant $\mathrm{C} = \mathrm{C}(\Omega^{\prime}, \delta, g_0, n, \kappa_0, \beta) > 0$ such that
        $$\|{\bf u}\|_{C^{1,\gamma}(\Omega^{\prime};\mathbb{R}^m)} \le \mathrm{C}\left( \sum_{i=1}^m\int_\Omega G\big(\vert\nabla u_i\vert\big) \ dx + \lambda\right).$$
\end{theorem}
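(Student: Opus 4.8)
The plan is to bootstrap from the already-established Hölder regularity (Theorem 1.1) to full gradient Hölder regularity via a comparison/freezing argument combined with a Campanato-type decay estimate. The overall strategy mirrors the classical De Giorgi–Campanato scheme for almost-minimizers: compare $\bf u$ on small balls with the $\mathcal{J}_G$-minimizer (equivalently, the $G$-harmonic replacement componentwise) sharing the same boundary data, use known interior $C^{1,\alpha}$ estimates for the limiting problem, and absorb the almost-minimality defect $\kappa r^\beta$ into the iteration.

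\textbf{Step 1 (Reduction to the interior, non-degeneracy of the ambient).} Fix $\Omega' \Subset P_{\bf u}$. Since $P_{\bf u} = \bigcup_i \{u_i > 0\}$ is open and, by Theorem 1.1, each $u_i$ is continuous, on a slightly larger $\Omega'' \Subset P_{\bf u}$ we have $|{\bf u}| \ge c_0 > 0$, hence the indicator term $\chi_{\{|{\bf u}|>0\}}$ is locally constant ($\equiv 1$) and can be dropped from the functional on balls $B_r(x_0) \subset \Omega''$ once $r$ is small enough that any competitor close to $\bf u$ also stays positive. So locally $\bf u$ is a $(\kappa,\beta)$-almost-minimizer of the purely elliptic functional ${\bf v} \mapsto \sum_i \int G(|\nabla v_i|)$, which decouples: each $u_i$ is a scalar $(\kappa,\beta)$-almost-minimizer of $\int G(|\nabla v|)\,dx$.

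\textbf{Step 2 (Caccioppoli and comparison with the $G$-harmonic replacement).} On $B_r = B_r(x_0) \subset \Omega''$, let $h_i$ solve the minimization of $\int_{B_r} G(|\nabla v|)$ with $v = u_i$ on $\partial B_r$; then $h_i$ is a weak solution of the $G$-Laplace equation $\operatorname{div}(g(|\nabla h_i|)\tfrac{\nabla h_i}{|\nabla h_i|}) = 0$, for which the Lieberman-type interior estimates (valid precisely because $G \in \mathcal{G}(\delta,g_0)$, and quantitatively controlled when $g_0 = \delta + \varepsilon$) give $h_i \in C^{1,\alpha_0}_{loc}$ together with the Campanato decay
\[
\int_{B_\rho} |\nabla h_i - (\nabla h_i)_{B_\rho}|^2\,dx \le \mathrm{C}\Big(\tfrac{\rho}{r}\Big)^{n+2\alpha_0} \int_{B_r} |\nabla h_i - (\nabla h_i)_{B_r}|^2\,dx
\]
(in the appropriate $G$-weighted or, after the usual $p$-comparability, Lebesgue formulation). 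The key quantitative comparison estimate is
\[
\int_{B_r} G(|\nabla u_i - \nabla h_i|)\,dx \;\le\; \mathrm{C}\,\kappa r^\beta \,\mathcal{J}_G({\bf u};B_r),
\]
obtained by testing the almost-minimality inequality \eqref{VarIneqAlmMin} against the competitor that equals $h_i$ in the $i$-th slot and $u_j$ elsewhere, then exploiting the strong convexity of $G$ (a consequence of the lower bound $\delta$ in \eqref{Ga}, giving a quantitative uniform convexity modulus) to pass from the energy gap to the gradient-difference integral.

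\textbf{Step 3 (Excess decay iteration).} Define the excess $\Phi(x_0,r) \coloneqq \fint_{B_r(x_0)} |\nabla u_i - (\nabla u_i)_{B_r(x_0)}|^2\,dx$ (after reducing $G$-growth quantities to $L^2$ ones via the ellipticity range $g_0 = \delta + \varepsilon$, $\varepsilon$ small — this is where the extra hypothesis on the ellipticity constants enters decisively, ensuring the frozen operator is a small perturbation of the Laplacian or of a fixed $p$-Laplacian). Combining the triangle inequality, the decay for $h_i$ from Step 2, and the comparison estimate, one gets
\[
\Phi(x_0,\theta r) \le \mathrm{C}\theta^{2\alpha_0}\Phi(x_0,r) + \mathrm{C}\theta^{-n}\kappa r^\beta\big(\Phi(x_0,r) + 1\big).
\]
Choosing $\theta$ small to beat $\mathrm{C}\theta^{2\alpha_0} \le \tfrac12 \theta^{2\gamma}$ with $\gamma < \min\{\alpha_0,\beta/2\}$, and then $r$ small, a standard iteration lemma yields $\Phi(x_0,r) \le \mathrm{C}r^{2\gamma}$ uniformly for $x_0 \in \Omega'$, which by Campanato's characterization gives $\nabla u_i \in C^{0,\gamma}(\Omega')$ with the asserted estimate (the global energy $\sum_i \int_\Omega G(|\nabla u_i|) + \lambda$ controlling the starting excess).

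\textbf{Main obstacle.} The crux is Step 2's quantitative comparison and the reduction in Step 3 of Orlicz-growth energies to $L^2$-excess quantities in a way that is stable under the freezing of coefficients; handling the genuinely nonstandard, $t$-dependent ellipticity of the $G$-Laplacian — rather than a fixed power $p$ — requires the structural controls \eqref{Ga}--\eqref{Qg} on $\mathcal{Q}_g$ and the restriction $g_0 = \delta + \varepsilon$ to keep the perturbation small and the constants universal. Controlling the degenerate region $\{\nabla u_i \approx 0\}$, where the $G$-Laplacian loses uniform ellipticity, is the technically delicate point and is precisely why the theorem is stated on $\Omega' \Subset P_{\bf u}$ with the strengthened ellipticity assumption rather than globally.
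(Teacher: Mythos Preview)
Your overall architecture—compare with the $g$-harmonic replacement, use interior decay for the replacement, absorb the almost-minimality defect into a Campanato iteration—matches the paper's. However, two substantive points diverge and leave real gaps.

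First, the paper does \emph{not} work with the $L^2$ excess of $\nabla u_i$. It passes through the auxiliary map $\mathbf{V}_G(z) = (G'(|z|)/|z|)^{1/2} z$ (see \eqref{defVg}) and runs the entire Campanato argument on the $L^2$ excess of $\mathbf{V}_G(\nabla u_i)$. This is the device that turns Orlicz-growth quantities into genuine $L^2$ quantities without any smallness of $g_0-\delta$: the comparison estimate (Lemma~\ref{controlVg}) bounds $\int |\mathbf{V}_G(\nabla u_i) - \mathbf{V}_G(\nabla u_i^*)|^2$, the reference decay (Lemma~\ref{L22}(b), from \cite{DSV}) is stated for $\mathbf{V}_G(\nabla u_i^*)$, and at the end one uses that $\mathbf{V}_G$ is a bi-H\"older homeomorphism (Lemma~\ref{L22}(a)) to transfer H\"older continuity back to $\nabla u_i$. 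Your proposed reduction to plain $L^2$ excess via ``$g_0 = \delta + \varepsilon$ makes the operator a small perturbation of a fixed $p$-Laplacian'' is not how the paper proceeds; in particular your comparison bound $\int G(|\nabla u_i - \nabla h_i|) \le C\kappa r^\beta \mathcal{J}_G$ is not the inequality that falls out of convexity here, and the Lieberman decay for $g$-harmonic functions is available for $\mathbf{V}_G(\nabla h_i)$, not directly for $\nabla h_i$ in $L^2$.

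Second, and more importantly, you have misidentified the role of the hypothesis $g_0 = \delta + \varepsilon$. It is \emph{not} used to linearize the operator. In the paper's iteration the Campanato exponent one lands on is $\frac{\theta\sigma + \alpha - 1}{1+\theta}$ (with $\theta = \beta/(\sigma+n)$), where $\alpha$ is the H\"older exponent fed in from Theorem~\ref{Holdereg} via the bound \eqref{hf}, and is constrained by $\alpha \le \delta/g_0$. For the iteration to give any positive decay one needs $\alpha > 1 - \theta\sigma$, hence $\delta/g_0 > 1 - \theta\sigma$, and \emph{that} is what forces $g_0 - \delta$ small (or alternatively $\beta$ large, cf.\ Remark~\ref{obs1} and Corollary~\ref{corgradhold}). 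Your Step~3 never feeds the prior H\"older estimate into the iteration, so the mechanism by which the structural condition enters is absent from your outline.

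A minor point on Step~1: you cannot argue that competitors ``stay positive''. The correct observation is that on $B_r \subset P_{\bf u}$ one has $\chi_{\{|{\bf u}|>0\}} \equiv 1$ while $\chi_{\{|{\bf v}|>0\}} \le 1$ for any competitor, so the indicator contributes only an additive term of order $\lambda|B_r|$ on the right, which is harmless in the iteration.
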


However, it is worth noting in the previous theorem that if we require the exponent $\beta$ to be sufficiently large -- rather than $0 < \beta < 1$ -- then the condition $g_0 = \delta + \varepsilon$ can be removed from our assumptions. This relaxation highlights the interaction between the degree of regularity and the structural assumptions about the coefficients. For a detailed statement and comments of this refinement, we refer the reader to the Remark \ref{obs1} and Corollary \ref{corgradhold} below.

\medskip

As our last result, we obtain a universal Lipschitz estimate for $(\kappa,\beta)$-almost-minimizers of \eqref{VarIneqAlmMin}.

\begin{theorem}[{\bf Local Lipschitz regularity of almost-minimizers}]\label{MainThm03}  Suppose $G\in \mathcal{G}_\nu(\delta,g_0,\rho_0)$, with $g_0=\delta+\varepsilon$, for some given $\varepsilon = \varepsilon(n,m,\beta,\delta,g_0)\ge0$, and let ${\bf u} = (u_1, \dots, u_m)$ be a $(\kappa,\beta)$-almost-minimizer of $\mathcal{J}_G$ in $\Omega$, with some positive constant $\kappa \le \kappa_0$ and exponent $0<\beta<1$. Then, $u$ is locally Lipschitz continuous.
\end{theorem}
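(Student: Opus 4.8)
The plan is to deduce the Lipschitz bound from the two regularity results already at our disposal --- the interior $C^{0,\alpha}$ estimate of Theorem \ref{Holdereg} and, crucially, the interior $C^{1,\gamma}$ estimate inside $P_{\bf u}$ of Theorem \ref{gradHold} --- together with a \emph{linear growth} estimate for $|{\bf u}|$ off its zero set. Fixing $\Omega'\Subset\Omega$, the whole problem reduces to producing constants $C,r_0>0$ such that
\begin{equation}\label{eq:lingrowth}
\sup_{B_r(x_0)}|{\bf u}| \ \le\ Cr\qquad\text{for all } x_0\in\mathfrak{F}({\bf u})\cap\Omega'\ \text{and}\ 0<r<r_0.
\end{equation}
Indeed, once \eqref{eq:lingrowth} is known, fix $y\in P_{\bf u}\cap\Omega'$ and set $d:=\mathrm{dist}(y,\mathfrak{F}({\bf u}))$; if $d$ is bounded below one gets $|\nabla{\bf u}(y)|\le C$ from interior estimates in a ball of fixed size, and if $d$ is small one picks $x_0\in\mathfrak{F}({\bf u})$ with $|x_0-y|=d$ and passes to the rescaling ${\bf u}^{(d)}(x):=d^{-1}{\bf u}(y+dx)$. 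Since $\nabla{\bf u}^{(d)}(x)=\nabla{\bf u}(y+dx)$, the map ${\bf u}^{(d)}$ is a $(\kappa d^\beta,\beta)$-almost-minimizer of the \emph{same} functional $\mathcal{J}_G$, and $B_1$ lies in its positivity set; Theorem \ref{gradHold} then applies on $B_{1/2}$, its right-hand side $\sum_i\int G(|\nabla u_i^{(d)}|)+\lambda$ being bounded uniformly in $d$ by a Caccioppoli inequality combined with \eqref{eq:lingrowth} (which gives $G(|u_i|/d)\le G(2C)$ on $B_{2d}(x_0)$). Hence $|\nabla{\bf u}(y)|=|\nabla{\bf u}^{(d)}(0)|\le C$, and a standard interpolation between $P_{\bf u}$ and $\{{\bf u}=0\}$ upgrades this pointwise gradient bound to ${\bf u}\in C^{0,1}_{\mathrm{loc}}(\Omega)$. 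What makes this passage soft is that the scaling $x\mapsto y+dx$ leaves $\nabla$, hence $\mathcal{J}_G$, invariant, so no loss of homogeneity occurs here.

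The heart of the proof is therefore \eqref{eq:lingrowth}, which I would obtain from a dichotomy in the spirit of De Silva--Savin: there exist $C,r_0>0$ so that, for $x_0\in\mathfrak{F}({\bf u})\cap\Omega'$ and $0<r<r_0$,
\begin{equation}\label{eq:dicho}
\sup_{B_{r/2}(x_0)}|{\bf u}|\ \le\ \max\Big\{\tfrac12\sup_{B_r(x_0)}|{\bf u}|,\ Cr\Big\}.
\end{equation}
Iterating \eqref{eq:dicho} along dyadic radii keeps the scaled quantity $2^k r_0^{-1}\sup_{B_{2^{-k}r_0}(x_0)}|{\bf u}|$ nonincreasing once it falls below $2C$, and it starts bounded by $r_0^{-1}\|{\bf u}\|_{L^\infty(\Omega')}$; filling in intermediate radii yields \eqref{eq:lingrowth}. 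To prove \eqref{eq:dicho} one may assume $M:=\sup_{B_r(x_0)}|{\bf u}|>Cr$, the other case being trivial. Let ${\bf w}=(w_1,\dots,w_m)$ be the componentwise $G$-harmonic replacement of ${\bf u}$ in $B_r(x_0)$ --- each $w_i$ minimizing $\int_{B_r}G(|\nabla w_i|)$ subject to $w_i=u_i$ on $\partial B_r(x_0)$, so $0\le w_i\le M$ by comparison --- and combine \eqref{VarIneqAlmMin}, the minimality of ${\bf w}$, and $|\{|{\bf w}|>0\}\cap B_r|\le|B_r|$ to get
\[
\sum_{i=1}^m\int_{B_r(x_0)}\!\big(G(|\nabla u_i|)-G(|\nabla w_i|)\big)\,dx\ \le\ \kappa r^\beta\,\mathcal{J}_G({\bf w};B_r(x_0))+\lambda|B_r|.
\]
The uniform convexity encoded in the bound $\delta\le\mathcal{Q}_g$ of \eqref{Ga} makes the left side control $\sum_i\int_{B_r}G(|\nabla(u_i-w_i)|)$, while Caccioppoli bounds the right side by $Cr^n\big(G(M/r)+\lambda\big)$; using $M>Cr$ and the superlinearity of $G$ to absorb $\lambda r^n$ into $r^nG(M/r)$ gives $\sum_i\int_{B_r}G(|\nabla(u_i-w_i)|)\le \omega_0(\kappa r^\beta)\,r^nG(M/r)$ with $\omega_0(0^+)=0$. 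Feeding this into the (scale-invariant) Orlicz Sobolev--Poincar\'{e} and De Giorgi--Nash--Moser estimates available for $G\in\mathcal{G}_\nu(\delta,g_0,\rho_0)$, and normalizing by $M$, produces an averaged closeness of ${\bf u}$ and ${\bf w}$ on $B_{r/2}(x_0)$; since $u_i(x_0)=0$ this forces $w_i(x_0)\le\omega(\kappa r^\beta)M$, and Harnack for the nonnegative $G$-harmonic function $w_i$ upgrades this to $\sup_{B_{r/2}(x_0)}w_i\le C_H\,\omega(\kappa r^\beta)M$. Combining with the local boundedness of the almost-minimizer $u_i$ (the De Giorgi tool behind Theorem \ref{Holdereg}) and choosing $r_0$ so small that the resulting constant times $\omega(\kappa_0 r_0^\beta)$ is $\le\tfrac12$, one arrives at \eqref{eq:dicho}.

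The main obstacle is precisely this last step: converting the almost-minimality defect $\kappa r^\beta$ --- and the genuinely lower-order free-boundary term $\lambda r^n$ --- into an $L^\infty$-smallness of ${\bf u}-{\bf w}$ on $B_{r/2}(x_0)$ with a modulus $\omega$ that does \emph{not} deteriorate as $r\to0$. This is where the hypotheses enter in an essential way: (i) the two-sided bound $\delta\le\mathcal{Q}_g\le g_0$ of \eqref{Ga} supplies the quantitative convexity $G(a)-G(b)-g(|b|)\frac{b}{|b|}\cdot(a-b)\gtrsim G(|a-b|)$; (ii) the integral control \eqref{Qg} on $\mathcal{Q}_g'$ together with the normalization $G(1)\ge\rho_0$ ensure that the Harnack, H\"older and Sobolev constants for the $G$-operator are invariant under the rescalings $G\mapsto bG(a\,\cdot)$ that arise when zooming in, which is what keeps $\omega$ scale-independent and lets the dyadic iteration close; and (iii) the superlinearity built into the $\mathcal{N}$-function axioms is what allows $\lambda r^n$ to be swallowed by $r^nG(M/r)$ once $M\gtrsim r$. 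A further technical point is that almost-minimizers do not solve a PDE, so the monotonicity of spherical averages used for genuine minimizers must be replaced throughout by the energy-comparison inequality above, tolerating an $O(\kappa r^\beta)$ defect --- exactly as in \cite{BFS24} in the $p$-Laplacian case, of which the present argument is the Orlicz counterpart. Finally, the assumption $g_0=\delta+\varepsilon$ is used only to make Theorem \ref{gradHold} available in the concluding gradient step; as in Corollary \ref{corgradhold} and Remark \ref{obs1}, it can be dispensed with at the price of taking $\beta$ close to $1$, since the interior $C^{1,\gamma}$ estimate is then unconditional.
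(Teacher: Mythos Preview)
Your overall architecture matches the paper's: reduce to a linear-growth estimate at free boundary points (your \eqref{eq:lingrowth}/\eqref{eq:dicho}, the paper's Proposition \ref{MainProp02}), then feed this into the interior $C^{1,\gamma}$ bound of Theorem \ref{gradHold} via the dilation ${\bf u}_d(x)={\bf u}(y+dx)/d$, combined with Caccioppoli (Proposition \ref{CaccioIne}); this is exactly the paper's Corollary \ref{MainCorollary} plus the two cases $d\le r_0$ and $d>r_0$.

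Where you diverge from the paper is in the proof of the dichotomy \eqref{eq:dicho}. The paper does \emph{not} argue quantitatively via harmonic replacement at a fixed scale. Instead it argues by contradiction and compactness: if \eqref{eq:dicho} fails along sequences $({\bf u}^j,G_j,k_j)$, one rescales ${\bf v}^j(x)={\bf u}^j(2^{-k_j}x)/\mathcal{S}(k_j+1,{\bf u}^j)$, obtaining almost-minimizers of rescaled functionals $\widehat{\mathcal{J}}_{\widehat G_j}$ with $\widehat\lambda_j\to 0$. The hypothesis $G\in\mathcal{G}_\nu(\delta,g_0,\rho_0)$ is used \emph{precisely} for the compactness of the family $\{\widehat G_j\}$ (Theorem \ref{Thm-Compactness-G_j}), not merely for uniformity of constants as you suggest; together with Proposition \ref{Prop4.2} this yields a $g$-harmonic limit $v_i^\infty\ge 0$ with $v_i^\infty({\bf 0})=0$ and $\sup_{B_{1/2}}v_i^\infty=1$, contradicting Harnack (Theorem \ref{Harnack-Ineq}).

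Your direct route has a genuine gap at the step ``energy closeness $\Rightarrow$ pointwise closeness''. From the comparison you correctly obtain $\sum_i\int_{B_r}G(|\nabla(u_i-w_i)|)\lesssim(\kappa r^\beta+\lambda/G(M/r))\,r^nG(M/r)$, and Poincar\'e gives a modular bound on $u_i-w_i$. But invoking ``De Giorgi--Nash--Moser'' here is incorrect: $u_i-w_i$ satisfies no equation, so there is no sub/supersolution structure to iterate. Without an $L^\infty$ bound on $u_i-w_i$ you cannot deduce $w_i(x_0)\le\omega M$ from $u_i(x_0)=0$, nor pass from $\sup_{B_{r/2}}w_i$ to $\sup_{B_{r/2}}u_i$. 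The gap is repairable --- after rescaling by $M$, both $\tilde u_i$ and $\tilde w_i$ are uniformly $C^{0,\alpha}$ (Theorem \ref{Holdereg} for $\tilde u_i$, Lieberman for $\tilde w_i$, with constants depending only on $\delta,g_0$ and the bound $\widehat G_j(1)\le 1$ from Proposition \ref{prop:scaling}), and an interpolation of the form $\|f\|_{L^\infty}\lesssim[f]_{C^{0,\alpha}}^{\theta}\big(\int \tilde G(|f|)\big)^{1-\theta}$ then converts the modular bound into sup-smallness --- but this is not what you wrote, and it needs to be carried out uniformly over the moving Young functions $G_{M/r}$. A minor aside: the alternative to $g_0=\delta+\varepsilon$ is $\beta\ge 2(n/\sigma+1)$ (Remark \ref{obs1}), which is large, not ``$\beta$ close to $1$''.
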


In a similar way to Theorem \ref{gradHold}, it is important to emphasize that the structural restriction $g_0 = \delta + \varepsilon$ can be withdrawn, provided the exponent $\beta$ is taken sufficiently large. This observation reinforces the delicate balance between the regularity of almost-minimizers and the growth conditions imposed on the function $G$. In particular, increasing the value of $\beta$--which quantifies the proximity of the functional to being an exact minimizer--has a compensatory effect that allows us to weaken the assumptions on the growth ratio $g_0/\delta$. This reflects a deeper phenomenon: the closer a function is to being a legitimate minimizer (in the variational sense), the less stringent the structural conditions on the integrand must be to ensure regularity.

\medskip

As a consequence of our results, we obtain local Lipschitz bounds for minimizers of $\mathcal{J}_G$. These estimates can be regarded as the local vectorial counterpart of those established by Mart\'{i}nez--Wolanski \cite{MW08} and da Silva \textit{et al.} \cite{daSSV} in the scalar setting.

\begin{corollary}[{\bf Lipschitz regularity for minimizers}]\label{Corollary1.1}
    Let $\Omega$ be a $C^{1,\alpha}$ domain in $\mathbb{R}^n$, and let $G$ be an $\mathcal{N}$-function such that $G \in \mathcal{G}(\delta,g_0)$ with $\delta>1$. Suppose $\mathbf{u}:\Omega \to \mathbb{R}^m$ is a minimizer of $\mathcal{J}_G$ in $\Omega$ with boundary data $\mathbf{\Phi}\in C^{1,\alpha}(\Omega,\mathbb{R}^m)$. Then $\mathbf{u}$ is locally Lipschitz continuous.
\end{corollary}

In particular, we emphasize that minimizers of $\mathcal{J}_G$, which are locally Lipschitz continuous by Corollary~\ref{Corollary1.1}, satisfy the associated one-phase Bernoulli-type free boundary problem
\[
\left\{
\begin{array}{rcl}
\mathrm{div}\!\left(g(|\nabla \mathbf{u}|)\dfrac{\nabla \mathbf{u}}{|\nabla \mathbf{u}|}\right) = 0 
& \text{in} & 
\displaystyle P_{\mathbf{u}} \coloneqq \Bigl(\bigcup_{i=1}^{m} \{u_i>0\}\Bigr) \cap \Omega,\\[0.2cm]
\mathbf{u} = 0, \quad |\nabla \mathbf{u}| = \lambda^{\sharp} 
& \text{on} & 
\partial P_{\mathbf{u}} \cap \Omega,\\[0.2cm]
G'(t) = g(t) 
& \text{and} & 
g(\lambda^{\sharp})\lambda^{\sharp} - G(\lambda^{\sharp}) = \lambda.
\end{array}
\right.
\]
This result extends the classical Alt--Caffarelli theory to the vectorial Orlicz--Sobolev framework (cf. Martinez--Wolanski \cite{MW08} for the scalar case). Moreover, it plays a central role in the analysis of further related one-phase free boundary problems, which we plan to address in forthcoming works (for the vectorial setting), including:
\begin{enumerate}
    \item[\checkmark] Optimization problems with volume constraints in Orlicz spaces (cf. \cite{AAC86} and \cite{Mart2008}),
    \[
    \min_{\mathcal{K}_{\mu}} \int_{\Omega} G(|\nabla u|)\,dx,
    \quad 
    \mathcal{K}_{\mu} \coloneqq \Bigl\{u \in W^{1,G}(\Omega) \,:\, |\{u>0\}|=\mu,\ \ u=\phi_0 \ \text{on } \partial\Omega \Bigr\}.
    \]
    \item[\checkmark] Singular perturbation problems for operators with Orlicz-type growth (cf. \cite{MW2009}),
    {\scriptsize
    \[
    \mathscr{L}_gu^{\varepsilon}
    = \mathrm{div}\!\left(g(|\nabla u^{\varepsilon}|)\dfrac{\nabla u^{\varepsilon}}{|\nabla u^{\varepsilon}|}\right)
    = \zeta_{\varepsilon}(u^{\varepsilon}), 
    \qquad u^{\varepsilon} \ge 0,
    \]
    }
    with $u^{\varepsilon} \to u_0$ as $\varepsilon \to 0$, where $u_0$ satisfies in the weak sense
    {\scriptsize
    \[
    \left\{
    \begin{array}{ccl}
    \mathrm{div}\!\left(g(|\nabla u_0|)\dfrac{\nabla u_0}{|\nabla u_0|}\right) = 0 
    & \text{in} & \{u_0>0\}\cap \Omega,\\[0.1cm]
    u_0=0,\quad |\nabla u_0| = \lambda^{\ast}(g, \mathrm{M}) 
    & \text{on} & \partial\{u_0>0\}\cap \Omega,\\[0.1cm]
    G'(t) = g(t) 
    & \text{and} & g(\lambda^{\ast})\lambda^{\ast}-G(\lambda^{\ast}) = \lambda
    \end{array}
    \right.
    \]
    }
    and
    \[
    \zeta_{\varepsilon}(s) = \frac{1}{\varepsilon}\,\zeta\!\left(\frac{s}{\varepsilon}\right),
    \qquad 
    0 \le \zeta \in C_0^{\infty}(0,1), 
    \quad 
    \int_0^1 \zeta(t)\,dt = \mathrm{M}>0,
    \]
    is an approximation of the identity.
\end{enumerate}

\subsubsection*{Structure of the manuscript}

The paper is organized as follows. In Subsection \ref{SubSec1}, we provide a brief overview of the current state-of-the-art related to the problem under consideration. Section \ref{Sec2} is dedicated to recalling essential definitions and fundamental properties of Orlicz and Orlicz-Sobolev spaces, which serve as the functional framework for our analysis. In Section \ref{Sec3}, we establish H\"{o}lder continuity results for almost-minimizers, namely Theorems \ref{Holdereg} and \ref{gradHold}. Finally, Section \ref{Sec4} is devoted to the proof of Theorem \ref{MainThm03}.

\subsubsection*{Novelties, comparisons, and technical challenges of this article
}

Although our approach is strongly inspired by \cite{BFS24}, we emphasize that the non-homogeneous nature of our functional introduces several additional difficulties, compounding the nonlinear features already present in the vectorial $p$-Laplacian setting. By way of illustration, we highlight the following issues:

\begin{enumerate}
    \item[\checkmark] The lack of sharp embedding results and the inherent difficulty of handling modulars and norms (for instance, in the application of H\"older's inequality);
    \item[\checkmark] The absence of established H\"older continuity estimates for vectorial almost-minimizers in Orlicz spaces (see Lemmas~\ref{lemreg} and~\ref{controlVg} for new results);
     \item[\checkmark] We obtain the precise control of the decay of the ``excess functional'', which yields H\"older gradient estimates for almost-minimizers away from the free boundary (see Lemma \ref{controlVg});
     \item[\checkmark] To derive several sharp estimates (e.g., Theorems~\ref{Holdereg} and~\ref{gradHold}), we employ a $g$-harmonic approximation scheme, which replaces the solution with a standard $g$-harmonic function and enables the application of known regularity results (see Lemma \ref{controlVg});
    \item[\checkmark] One novelty of our work is the weak compactness obtained via Caccioppoli-type inequalities (see Proposition~\ref{CaccioIne});
    \item[\checkmark] Another novelty, in the vectorial setting, is the convergence analysis of families $(\mathbf{v}^j, G_j)$ within a suitable topological framework (see Proposition~\ref{Prop4.2}), which is pivotal for establishing linear growth at free boundary points (see Proposition~\ref{MainProp02});
    \item[\checkmark] We obtain the uniform control of almost-minimizers near free boundary points (see Proposition~\ref{MainProp02});
    \item[\checkmark] Our approach differs substantially from the dichotomy-type strategy—namely, whether the averaged energy of an almost-minimizer decreases in smaller balls or the almost-minimizer is arbitrarily close to a linear function—employed by Dipierro \textit{et al.}~\cite{DiPFFV24} and da Silva \textit{et al.}~\cite{daSSV} to obtain Lipschitz regularity in scalar settings. Instead, we develop a collection of potential-theoretic tools (including Caccioppoli estimates, H\"older-type characterizations of almost-minimizers, $g$-harmonic replacements, and control of excess functionals), which provide the key mechanisms for establishing the desired Lipschitz regularity.
\end{enumerate}

These points summarize several of the main challenges addressed in this manuscript, in addition to the intrinsic technical difficulties arising from general non-power-type, and thus non-homogeneous, growth behaviors.

\subsection{State-of-the-Art: Alt-Caffarelli functionals and their regularity theories}\label{SubSec1}

Regarding minimizers in the linear setting, in Alt-Caffarelli's seminal paper \cite{AltCaf}, concerning the regularity of minimizers to the one-phase Bernoulli energy functional is given by
$$
\displaystyle \mathcal{J}_2(u, \Omega) := \int_{\Omega} \left(|\nabla u|^2 + \chi_{\{u>0\}}\right)dx,
$$
 where the local Lipschitz regularity estimate is established. Moreover, it can be shown that minimizers $\displaystyle \mathcal{J}_2(u_0, \Omega) = \min_{\mathcal{K}} \mathcal{J}_2(u, \Omega)$ are solutions of the following free boundary problem
\begin{equation}\label{AltCaffProb}
  \left\{
\begin{array}{rclcl}
  \Delta u_0(x) & =& 0 & \text{in} & \{u_0>0\}\cap \Omega \\
  |\nabla u_0| & = & \sqrt{2} & \text{on} & \partial \{u_0>0\}\cap\Omega\\
  u(x) & =& g(x) & \text{on} & \partial \Omega.
\end{array}
\right.
\end{equation}
in an appropriate distributional sense.

The natural motivations to investigate such a class of free boundary problems of Bernoulli type  \eqref{AltCaffProb} come from the analysis of cavity and jet type problems, see e.g. \cite[Section 1.1]{CafSalBook}. Another relevant model also arises in combustion theory \cite{BCN90}, optimal design problems \cite{Tei10}, optimization problems with constrained volume \cite{AAC86} and \cite{Mart2008}, shape optimization problems \cite{BucBut05} and \cite{BucVel15} and phase transitions \cite{PetVald05}, and \cite{PetVald05-II}, just to mention a few scenarios. In the geometric setting, the variational formulation of \eqref{AltCaffProb} enforces a balance between the Dirichlet energy and the measure term in the functional $\mathcal{J}_2$, thereby characterizing the free boundary $\Sigma_u \coloneqq \partial \{u>0\}$ as a hypersurface with prescribed mean curvature in a variational sense. Indeed, domain variations yield a free boundary condition in which the difference between the Dirichlet energy density and the measure term determines the mean curvature of $\Sigma_u$. Consequently, the regularity and structure of the free boundary are closely connected to classical problems in geometric measure theory and the calculus of variations, such as minimal surface and capillarity-type problems; see \cite{Velichkov23} for further discussion.

Over the past decades, there has been extensive literature exploring this research topic, particularly in the context of both one and two-phase problems, most notably the celebrated viscosity approach to the associated free boundary developed by Caffarelli in the series of seminal works \cite{Caff87}, \cite{Caff88} and \cite{Caff89}. We must refer the reader to \cite{DephiSpoVel21}, \cite{DeSFS19}, and \cite{Velichkov23} for comprehensive modern essays on the subject.

For a quasilinear scenario, minimizers of the functional 
$$
\mathcal{J}_p(u, \Omega) := \int_{\Omega} \left(|\nabla u|^p +\lambda^{p}\chi_{\{u>0\}}\right)dx, \quad 1<p< \infty \quad \Rightarrow \quad 
\left\{\begin{array}{ccl}
    \Delta_p u = 0 & \text{in}&  \{u>0\}\cap \Omega \\
    u=0, \,\,\,\,|\nabla u| = \frac{\lambda}{(p-1)^{\frac{1}{p}}} & \text{on} & \partial \{u>0\}\cap \Omega
\end{array}
\right.
$$ 
have also been considered by Danielli and Petrosyan in \cite{DanPet05}, in which the regularity of the free boundary near flat points was addressed. Recently, Lipschitz estimates of the minimizers of the functional $\mathcal{J}_p$ have been discussed by Dipierro and Karakhanyan in \cite{DiPKar18}, where the authors also supplied the proof of the Lipschitz regularity when $p = 2$ without relying on monotonicity formulas.

We must also recall Martinez-Wolanski's work \cite{MW08}, which considers the optimization problem of minimizing
$$
\mathcal{J}_{\mathrm{G}, \lambda}(u,\Omega) := \int_\Omega \left(\mathrm{G}(|\nabla u|)+\lambda\chi_{\{u>0\}}\right)\,dx  \quad \Rightarrow \quad 
\left\{\begin{array}{ccl}
    \mathrm{div} \left(g(|\nabla u|)\dfrac{\nabla u}{|\nabla u|}\right) = 0 & \text{in}&  \{u>0\}\cap \Omega \\
    u=0,\,\,\,\,\, |\nabla u| = \lambda^{\ast} & \text{on} & \partial \{u>0\}\cap \Omega\\
    G^{\prime}(t) = g(t)& \text{and} & g(\lambda^{\ast})\lambda^{\ast}-G(\lambda^{\ast}) = \lambda
\end{array}
\right.
$$
in the class $W^{1,\mathrm{G}}(\Omega)$ with $u-\phi_0\in W_0^{1,\mathrm{G}}(\Omega)$, for a bounded function $\phi_0\ge 0$ and $\lambda>0$. The authors prove that solutions to the optimization problem are locally Lipschitz continuous. Moreover, such solutions satisfy the corresponding free boundary problem of Bernoulli-type, thereby extending Alt-Caffarelli's results for the scenario of the Orlicz-Sobolev framework. Additionally, they address Caffarelli’s classification scheme: flat and Lipschitz free boundaries are locally $C^{1,\alpha}$ for some $\alpha(\verb"universal") \in (0,  1)$. In the sequel, in \cite{BM}, the authors extend the previous Mart\'{i}nez-Wolanski results to the two-phase problems and demonstrate the existence of a universal tolerance, depending only on the degenerate ellipticity and other intrinsic parameters, for the density of the negative phase along the free boundary, under which uniform Lipschitz regularity is attained, their approach strongly rely on the Karakhanyan's ideas in \cite{Karakhanyan2008}, which previous has addressed a similar result (local Lipschitz estimates) for a two-phase problem driven by $p-$Laplacian under a smallness condition on the density for the negativity set (see also \cite{Braga2018} for regularity estimates and asymptotic behavior of the free boundary for certain classes of minima of inhomogeneous two-phase Alt-Caffarelli functionals in Orlicz spaces). 
 %In \cite{BLO20}, the authors extend these results to quasilinear singular/degenerate operators in the nonhomogeneous setting (i.e., with a nonzero right-hand side).

Furthermore, viscosity approaches for one-phase problems driven by the $p-$Laplace and $p(x)-$Laplace operators have been developed in \cite{LeiRic18} and \cite{FerLed23}, respectively. Additionally, in \cite{DaSRRV23}, the authors obtain the Lipschitz regularity of viscosity solutions of one-phase problems with non-homogeneous degeneracy (fully nonlinear operators with double phase signature), and some regularity properties of their free boundaries were addressed.

\medskip

Returning to the setting of almost-minimizers, we note that these functions satisfy only a variational inequality (see~\eqref{VarIneqAlmMin}), rather than an exact Euler--Lagrange equation. Consequently, the associated profile $u$ (the almost-minimizer) minimizes the underlying geometric functional only up to a controlled error of order $\kappa r^{\beta}$. In the terminology of geometric measure theory, this means that $u$ can be interpreted as a quasi-minimal (or almost-minimal) ``hypersurface'', with deviation from minimality in $B_r(x_0)$ of order $\kappa r^{\beta}$. 
%We refer the reader to Tamanini's survey~\cite{T} for the %corresponding theory of almost-minimal boundaries, namely sets $E %\subset \Omega$ satisfying
%\[
%\mathrm{Per}(\partial E \cap A, B_r(x))
%\le
%\mathrm{Per}(\partial F \cap A, B_r(x)) + \omega(r)\, r^{n-1},
%\]
%for every $A \Subset \Omega$, $x \in A$, and every competitor $F$ %such that $F \Delta E \Subset A$, where $\omega(r)=o(1)$ and $X %\mapsto \mathrm{Per}(\partial X, B_r(x))$ denotes the perimeter %functional relative to the ball $B_r(x)$.
%{\color{red} Now coming back to the setting of almost-minimizers, we note that these functions satisfy only a variational inequality, see \eqref{VarIneqAlmMin}, rather than an exact Euler–Lagrange equation. As a consequence, the associated free boundary (\textcolor{blue}{free boundary or graph of $u$?})$\Sigma_u$ ``minimizes'' the underlying geometric functional only up to a controlled error of order $r^\beta$. In the terminology of geometric measure theory, this means that $\Sigma_u$ is a quasi-minimal (or almost-minimal) hypersurface at scale $r$. We refer the reader to the Tamanini's survey \cite{T} for the corresponding results on almost minimal boundaries}.
Thus, one of the main obstacles to establishing regularity properties for~\eqref{VarIneqAlmMin} is the absence of a monotonicity formula, as is available for minimizers (cf.~\cite{AltCaf}). In this regard, it seems to be challenging to derive such techniques to the setting of almost-minimizers (cf. \cite{DET19} and \cite{DEST21} for related topics). In particular, in the quite recent article \cite{DeSS21}, De Silva and Savin developed a non-variational approach, based on Harnack-type inequality, for profiles that do not necessarily satisfy an infinitesimal equation.

We highlight that almost-minimizers of $\mathcal{J}_2$ were widely investigated recently by several authors. In effect, we must cite the David {\it et al.} work \cite{DET19}, whereby developing an original approach combining techniques from potential theory and geometric measure theory, the authors obtain uniform rectifiability of the free boundary and, in the one-phase scenario, the corresponding $C^{1, \alpha}$ almost everywhere regularity. Therefore, Alt-Caffarelli's classical results in \cite{AltCaf} were extended to the framework of almost-minimizers.  We refer the reader to \cite{DEST21} for generalizations concerning variable coefficients. %Moreover, the analysis of the semilinear scenario with variable coefficients
%$$
%\displaystyle \mathcal{J}_{\mathbb{A}}^{\gamma}(v; \Omega) = \int_{\Omega} (\langle \mathbb{A}(x)\nabla v, \nabla v\rangle + q_{+}(v^{+})^{\gamma} + q_{-}(v^{-})^{\gamma})dx, \quad \text{for} \quad 0\le \gamma \le 1 \quad \text{and} \quad q_{\pm} \ge 0,
%$$
%where $\mathbb{A}$ is a matrix with H\"{o}lder continuous coefficients was explored in \cite{QueiTav18}, thereby proving sharp gradient estimates.

Recently, Pelegrino-Teixeira in \cite{PelegTeix24} investigated regularity estimates of quasi-minima of the Alt-Caffarelli energy functional
$$
\displaystyle \mathcal{J}_2(u, \Omega) := \int_{\Omega} \left(|\nabla u|^2 + \chi_{\{u>0\}}\right)dx
$$
Given a functional $\mathcal{J}: \Xi(\Omega) \to \mathbb{R}$ (for a suitable functional space $\Xi(\Omega)$), and a real parameter $\mathrm{Q} \geq 1$, we say $u \in \Xi(\Omega)$ is a \emph{quasi-minimizer} (Q-minimum for short) of $\mathcal{J}$ if
$$
\mathcal{J}(u, \mathrm{K}) \leq  \mathrm{Q}\cdot \mathcal{J}(v, \mathrm{K}),
$$
for all $v \in \Xi(\Omega)$ such that $\mathrm{K} := \text{supp}(u-v)$ is a compact subset of $\Omega$.
In such a context, they proved universal H\"{o}lder continuity of quasi-minima (including sign-changing Q-minima, see \cite[Theorem 3]{PelegTeix24}) and optimal Lipschitz regularity (for nonnegative Q-minimum) along their free boundaries (see \cite[Theorem 4]{PelegTeix24}).

In \cite{DeSS20}, using non-variational techniques, De Silva and Savin provided a different approach from that of \cite{DET19} and \cite{DT15} to deal with almost-minimizers of $\mathcal{J}_2$ and their free boundaries. Precisely, based on ideas developed by them in \cite{DeSS21}, they showed that almost-minimizers of $\mathcal{J}_2$ are viscosity solutions in a more general sense. Once this was confirmed, the regularity of the free boundary for almost-minimizers follows by applying De Silva's techniques, first developed in \cite{DeS11}.

Recently, Dipierro {\it et al.} in \cite{DiPFFV24} obtained a Lipschitz continuity result to nonnegative almost-minimizers for the nonlinear framework, namely for $\mathcal{J}_p$ in $B_1$, in the case $\max\left\{ 1, \frac{2n}{n+2}\right\}< p < \infty$. Precisely, there exists a universal constant $\mathrm{C}=\mathrm{C}(p, n, \beta, \kappa) >0$ such that
$$
\|\nabla u\|_{L^\infty(B_{1/2})}\leq \mathrm{C} \left(1+\|u\|_{W^{1,p}(B_1)}\right)
$$
Additionally, $u$ is uniformly Lipschitz continuous in a neighborhood of the contact set $\{u = 0\}$. In effect, the author's approach was strongly inspired by the method introduced by De Silva and Savin in \cite{DeSS20}. In their setting, the main obstacles faced are concerned with the lack of linearity and the loss of exact descriptions of $p-$harmonic profiles in terms of mean value properties. To overcome such complications, the authors exploit some regularity estimates available in the classical literature.
\medskip

Subsequently, Da Silva {\it et al.} in the manuscript \cite{daSSV} established the optimal Lipschitz regularity for non-negative almost-minimizers of a one-phase Bernoulli-type functional exhibiting non-standard growth. The functional under consideration is the scalar version of $\mathcal{J}_G$ presented in \eqref{DefFunctional} where $\Omega \subset \mathbb{R}^n$ is a bounded domain and $G$ is an $\mathcal{N}$-function satisfying appropriate structural conditions. The central result asserts that such almost-minimizers exhibit optimal local Lipschitz continuity.

To address the difficulties arising from the non-standard growth conditions, the authors employ Campanato-type estimates within the Orlicz-Sobolev framework, which represents a novel approach for this class of functionals. The principal outcome can be stated as follows: for any $\Omega^{\prime} \Subset \Omega$,
\[
\|\nabla u\|_{L^{\infty}(\Omega^{\prime})} \leq \mathrm{C} \, G^{-1}\left(1 + \int_{\Omega} G(|\nabla u|) \, dx\right),
\]
where $\mathrm{C}>0$ is a constant depending only on universal parameters. This work extends classical regularity theory for the one-phase Bernoulli problem to a more general class of variational functionals with non-polynomial growth (cf. \cite{DiPFFV24}), thereby offering new insights and methodologies relevant to a broad spectrum of nonlinear problems exhibiting non-standard growth behavior.

\bigskip

For the vectorial scenarios of the one-phase problem, we must quote the following contributions:

\begin{enumerate}

%  \item Recently, Fotouhi \textit{et al.} in \cite{Fotouhi2021} consider the elliptic system
%\[
%\Delta \mathbf{u} = \lambda^{+}(x) |\mathbf{u}^{+}|^{q-1} \mathbf{u}^{+} - \lambda^{-}(x) |\mathbf{u}^{-}|^{q-1} \mathbf{u}^{-} \quad \text{in} \quad B_1,
%\]
%where $\lambda^{\pm} > 0$ are  H\"{o}lder continuous functions, and $\mathbf{u} : B_1 \subset \mathbb{R}^n \to \mathbb{R}^m$ with $n \geq 2$ and $m \geq 1$. Observe that $\mathbf{u}^{\pm} = (u_1^{\pm}, \dots, u_m^{\pm})$ denotes the vectors of positive and negative parts of each component. 

%Weak solutions correspond to the unique minimizers (up to the prescribed boundary data) of the energy functional
%\[
%\mathcal{J}_0(\mathbf{u}) = \int_{B_1} \left( |\nabla \mathbf{u}|^2 + \frac{2}{1 + q} \lambda^{+}(x) |\mathbf{u}^{+}|^{1+q} + \frac{2}{1 + q} \lambda^{-}(x) |\mathbf{u}^{-}|^{1+q} \right) dx.
%\]

%The behavior of solutions, as well as that of the free boundary near asymptotically flat points, is analyzed following the framework developed by Andersson \textit{et al.} in \cite{ASUW15}. In addition, the authors establish higher regularity results for the solutions. Finally, they formulate an epiperimetric inequality, which serves as a principal tool for investigating the regularity of the free boundary $\partial\{x : |\mathbf{u}(x)| > 0\}$. As a consequence, the free boundary is shown to possess $C^{1, \alpha}$ regularity at asymptotically flat points.

  \item For a system with free boundary, De Silva {\it et al.} in \cite{DeSJeonShah22} study the regularity properties of vector-valued almost-minimizers of the functional
$$
      \displaystyle \mathcal{J}_2({\textbf{u}};\mathrm{D}) = \int_{\mathrm{D}} \left(|\nabla \textbf{u}(x)|^2 + 2|\textbf{u}|\right)dx, %\quad (\text{for}\,\,\,1 < p < \infty),
$$ %\textcolor{blue}{quien es $p$}
which is strongly related to a version of the classical obstacle problem (see \cite{Figalli18}).

% \item Afterwards, De Silva {\it et al.} in \cite{DeSJeonShah23} investigated the properties of almost minimizers for the functional
%\[
%W^{1,2}(\mathrm{D}; \mathbb{R}^m) \ni \mathbf{v} \mapsto \mathcal{J}^{\mathrm{F}}_2(\mathbf{v}, \mathrm{D}) \coloneqq \int_{\mathrm{D}} \left( |\nabla \mathbf{v}|^2 + 2\mathrm{F}(x, \mathbf{v}) \right) \, dx,
%\]
%where $\mathrm{D}$ is a domain in $\mathbb{R}^n$ and 
%\[
%\mathrm{F}(x, \mathbf{v}) = \frac{1}{1 + q} \left( \lambda^+(x) |\mathbf{v}^+|^{q+1} + \lambda^-(x) |\mathbf{v}^-|^{q+1} \right),
%\]
%for some exponent $q \in (0, 1)$, where the coefficients $\lambda^{\pm}$ are  H\"{o}lder continuous and uniformly bounded such that $0 < \lambda_0 \leq \lambda^{\pm}(x) \leq \lambda_1 < \infty$. %Here, $\mathbf{v}^{\pm} = (v^{\pm}_1, \dots, v^{\pm}_m)$ denotes the vectors composed of the respective positive and negative parts of each coordinate.

%Minimizers of this functional satisfy an elliptic system with free boundaries,
%\[
%\Delta \mathbf{u} = \lambda^{+}(x) |u^{+}|^{q-1} u^{+} - \lambda^{-}(x) |u^{-}|^{q-1} u^{-} \quad \text{in} \quad B_1,
%\]
%which has been previously analyzed in \cite{Fotouhi2021}. 

%The authors establish regularity estimates for both almost minimizers and their associated free boundaries. In particular, \cite[Theorem 1]{DeSJeonShah23}  demonstrates the optimal interior regularity of almost minimizers, namely $C_{\text{loc}}^{1, \alpha/2}$ regularity, along with corresponding quantitative estimates. 

  \item For the weakly coupled vectorial $p-$Laplacian, given constant $\lambda > 0$ and a bounded Lipschitz domain $\mathrm{D} \subset  \mathbb{R}^n$ (for $n \ge  2$), Shahgholian {\it et al.} in \cite[Theorem 1.1]{BFS24} address local optimal Lipschitz estimates for almost-minimizers of
$$
\displaystyle \mathcal{J}_p({\textbf{v}};\mathrm{D}) = \int_{\mathrm{D}} \left( \sum_{i=1}^{m} |\nabla v_i(x)|^p + \lambda \chi_{\{|\bf{v}|>0\}}(x)\right)dx, \quad (\text{for}\,\,\,1 < p < \infty),
$$
where ${\textbf{v}} = (v_1, \cdots , v_m)$, and $m \in \mathbb{N}$.

\item Finally, De Silva \textit{et al.} in \cite{DeSJeonShah25} investigated superlinear systems that give rise to free boundary problems. Such systems arise, for instance, from the minimization of the energy functional
\[
\mathcal{J}^{(p)}_2(\mathbf{u}, \Omega) \coloneqq \int_{\Omega} \left( |\nabla \mathbf{u}|^2 + \frac{2}{p} |\mathbf{u}|^p \right) \, dx, \quad 0 < p < 1,
\]
although solutions can also be interpreted in the viscosity framework.

First, they establish the optimal regularity of minimizers by employing a variational approach. Subsequently, they apply a linearization technique to derive the $C^{1,\alpha}$ regularity of the ``flat'' portion of the free boundary through a viscosity method. Finally, for free boundaries arising from minimizers, they extend this regularity result to analyticity.

\end{enumerate}

Summarizing, we should emphasize that our estimates are closely related to a
series of seminal works, both in the scalar and vectorial scenarios, as explained in the table below:

\begin{table}[h]
		\centering
		\resizebox{\textwidth}{!}{
			\begin{tabular}{c|c|c|c}
				{\bf {\tiny{Alt-Caffarelli functional}}} & {\bf {\tiny{{Profile}}}}  & {\bf {\tiny{Optimal regularity}}} &  \textbf{{\tiny{References}}} \\
				\hline
				{\tiny{$\displaystyle\int_{\Omega} \left(|\nabla u|^2 + \chi_{\{u>0\}}\right)dx$}} & {\tiny{Minimizer}} &  {\tiny{$C_{\text{loc}}^{0, 1}$}} &  {\tiny{\cite{AltCaf}}} \\
				\hline
				{\tiny{$\displaystyle \int_{\Omega} \left(|\nabla u|^p +\lambda^{p}\chi_{\{u>0\}}\right)dx$}} & {\tiny{Minimizer}} &  {\tiny{$C_{\text{loc}}^{0, 1}$}} &  {\tiny{\cite{DanPet05}}} \\
				\hline
				{\tiny{$\displaystyle \int_\Omega \left(\mathrm{G}(|\nabla u|)+\lambda\chi_{\{u>0\}}\right)\,dx$}} & {\tiny{Minimizer}} & {\tiny{$C_{\text{loc}}^{0, 1}$}}  &  {\tiny{\cite{MW08}}}   \\
				\hline
                {\tiny{$\displaystyle \int_{\Omega} \left(|\nabla u|^2 + \chi_{\{u>0\}}\right)dx$}} & {\tiny{Almost-minimizer}}&  {\tiny{$C_{\text{loc}}^{0, 1}$}} & {\tiny{\cite{DeSS20}}} \\
				\hline
				{\tiny{$\displaystyle \int_{\Omega} \left(|\nabla u|^2 + \chi_{\{u>0\}}\right)dx$}} & {\tiny{Quasi-minimizer}}&  {\tiny{$C_{\text{loc}}^{0, 1}$ along $\mathfrak{F}(u)$}} & {\tiny{\cite{PelegTeix24}}} \\
				\hline
				{\tiny{$\displaystyle \int_{\Omega} \left(|\nabla u|^p +\lambda^{p}\chi_{\{u>0\}}\right)dx$}} & {\tiny{Almost-minimizer}}& {\tiny{$C_{\text{loc}}^{0, 1}$}} & {\tiny{\cite{DiPFFV24}}}\\
				\hline
				{\tiny{$\displaystyle \int_{\Omega} \left(G(|\nabla u|) + \chi_{\{u>0\}}\right) \, dx$}} & {\tiny{Almost-minimizer}} & {\tiny{$C_{\text{loc}}^{0, 1}$}} & {\tiny{\cite{daSSV}}}\\
				\hline
				{\tiny{$\displaystyle  \int_{\Omega} \sum_{i=1}^{m} |\nabla v_i(x)|^p + \lambda \chi_{\{|\bf{v}|>0\}}dx$}} & {\tiny{Almost-minimizer}} & {\tiny{$C_{\text{loc}}^{0, 1}$}} & {\tiny{\cite{BFS24}}} \\
		\end{tabular}}
	\end{table}
%\newpage 
Taking into account the above results, we will derive uniform Lipschitz estimates for a general class of almost-minimizers of Alt-Caffarelli functionals in Orlicz spaces.

%Even if our approach is strongly inspired by the work \cite{BFS24}, we highlight that the non-homogeneous nature of our functional entails several difficulties that add to the nonlinear character of the problem already present in the vectorial $p-$Laplacian setting. By way of illustration, the lack of sharp embedding results and the hard task of handling modulars and norms (for instance, when applying H\"older's inequality), the absence of established H\"{o}lder continuity estimates for vectorial almost-minimizers in Orlicz spaces (see Lemmas \ref{lemreg} and \ref{controlVg}), the weak compactness derived via the Caccioppoli inequality (see Proposition \ref{CaccioIne}), the convergence analysis for a family of $(\mathbf{v}^j, G_j)$ within an appropriate topological framework (see Proposition \ref{Prop4.2}), and the uniform control of almost-minimizers near free boundary points (see Proposition \ref{MainProp02}) represent several of the significant challenges addressed in this manuscript. These are tackled alongside the inherent technical complexity of handling general non-power-type behaviors, which are not necessarily homogeneous.

\section{On the Orlicz and Orlicz-Sovolev spaces}\label{Sec2}

Here we recall some properties of the Orlicz spaces, which can be found in \cite{A,RR}. We recall that the definition of $\mathcal{N}$-function is already stated in the Introduction (see Definition \ref{Def-N-function}). Furthermore, by the convexity of any $\mathcal{N}$-function $G$, for any $t \in(0,+\infty)$, we get
	\begin{equation}\label{ine}
		G(\alpha t) \le \alpha G(t) \ \mbox{for} \ \alpha \in [0,1] \quad \mbox{and} \quad G(\tilde{\alpha} t) > \tilde{\alpha} G(t) \ \mbox{for} \ \tilde{\alpha}>1.
	\end{equation} 
    
\begin{definition}[{\bf Orlicz space}] The Orlicz space associated to $\mathcal{N}$-function $G$ and an open $U\subset \mathbb{R}^n$ is defined by
	$$
	L_G(U;\mathbb{R}^m)= \left\{{\bf u} \in L^1_{loc}(U;\mathbb{R}^m) \;: \;
	\int_U G \left(\frac{\vert {\bf u}(x)\vert}{\gamma}\right)\, dx<+\infty, \ \mbox{for some}~ \gamma>0\right\},
	$$
and
	$$
	|{\bf u}|_{L_G(U;\mathbb{R}^m)}=\inf \left\{\gamma>0\;:\;\int_U
	G \left(\frac{\vert {\bf u}(x)\vert}{\gamma} \right)\, dx\leq 1 \right\},
	$$
defines a norm (the Luxemburg norm) on $L_G(U;\mathbb{R}^m)$ and turns this space into a Banach space. In the study of the Orlicz space $L_G(U;\mathbb{R}^m)$, we denote by $K_G(U;\mathbb{R}^m)$ the Orlicz class as the set below
	$$K_G(U;\mathbb{R}^m) \coloneqq \left\{{\bf u} \in L^1_{loc}(U;\mathbb{R}^m)\;: \; \int_U G \big({\vert {\bf u}(x)\vert}\big)\, dx<+\infty\right\}.$$
In this case, we have
	$$K_G(U;\mathbb{R}^m) \subset L_G(U;\mathbb{R}^m),$$
where $L_G(U;\mathbb{R}^m)$ is the smallest subspace containing $K_G(U;\mathbb{R}^m)$.
\end{definition}

\begin{definition}[{\bf $\Delta_2$ and $\nabla_2-$condition}]
An $\mathcal{N}$-function $G$ satisfies the  $\Delta_2$-condition (shortly, $G\in \Delta_2$), if there exist $k_G>0$ and $t_0\geq0$  such that
	\begin{equation}\label{defD2}
		G(2t)\leq  k_GG(t),~  t\geq t_0.
	\end{equation}
Similarly, we say that an $\mathcal{N}$-function $G$ satisfies the  $\nabla_2$-condition (shortly, $G\in \nabla_2$), if there exist $l_G>1$ and $t_0\geq0$  such that
	\begin{equation}\label{defn2}
		G(t)\leq  \dfrac{1}{2l_G}G(l_Gt),~  t\geq t_0.
	\end{equation}
\end{definition}
It is possible to prove that if $G \in \Delta_2$ then
	$$K_G(U;\mathbb{R}^m) = L_G(U;\mathbb{R}^m).$$

The corresponding Orlicz-Sobolev space is defined by
    $$W^{1, G}(U;\mathbb{R}^m) = \left\{ {\bf u} \in L_{G}(U;\mathbb{R}^m) \ :\ \frac{\partial u_j}{\partial x_{i}} \in L_{G}(U;\mathbb{R}^m), \quad \substack{i = 1, \dots, n\\ j=1,\dots, m}\right\},$$
endowed with the norm
    $$\Vert {\bf u} \Vert_{W^{1,G}(U;\mathbb{R}^m)} =   \vert\nabla {\bf u}\vert _{L_G(U;\mathbb{R}^{nm})} +  \vert {\bf u}\vert_{L^G(U;\mathbb{R}^m)}.$$
The space $W^{1,G}(U;\mathbb{R}^m)$ endowed with $\|\cdot\|_{1,G}$ is always a Banach space. Furthermore, this space is reflexive and separable if, and only if, $G\in\Delta_2\cap\nabla_2$. For simplicity, when $m=1$ we will denote $L_G(U;\mathbb{R})$ and $W^{1,G}(U;\mathbb{R})$ solely as $L_G(U)$ and $W^{1,G}(U)$, respectively. 

Additionally, we have Poincar\'{e}'s inequality (see for instance \cite[Corollary 7.4.1]{HH}), namely
    \begin{equation}\label{Poincare}
        \intav{U} G\left(\tau\dfrac{|v-(v)_U|}{\mbox{diam} ( U)}\right)\ dx \le \intav{U} G(|\nabla v|)\ dx,
    \end{equation}
for any  $v \in W^{1,G}(U)$,  and some $\tau \in (0,1]$, where $\displaystyle (v)_U \coloneqq \intav{U} v(x) \ dx$.

We now turn our attention to a particular class of $\mathcal{N}$-functions, which brings to light natural conditions introduced by Lieberman (see \cite{L} for more details) in studying regularity
estimates of degenerate/singular elliptic PDEs of the type $-\mathrm{div} \left(g(|\nabla u|)\dfrac{\nabla u}{|\nabla u|}\right) = \mathcal{B}(x,u,\nabla u)$. This class, denoted by $\mathcal{G}(\delta, g_0)$ and defined in Definition \ref{defclasses}, is commonly referred to as the \textit{Lieberman class}. It is known to exhibit several desirable analytical properties. In particular, Lieberman \cite{L}, Martínez and Wolanski \cite{MW08}, and Fukagai, Ito, and Narukawa \cite{FIN} have established several important results concerning this class, where we summarize in the following statement:

\begin{statement}[{\bf Properties in the class $\mathcal{G}(\delta,g_0)$}]\label{Statement} Suppose $G\in \mathcal{G}(\delta,g_0)$. The following properties hold:

\begin{description}
	\item[$(g_1)$] $\min\left\{s^{\delta},s^{g_0}\right\}g(t) \le g(st) \le \max\left\{s^{\delta},s^{g_0}\right\}g(t)$;
	
	\item[$(g_2)$] $G$ is a convex and $C^2$ function;
	
	\item[$(g_3)$] $\dfrac{sg(s)}{g_0+1} \le G(s) \le sg(s),$ for any $s\ge 0$;

    \item[$(G_1)$]  Defining $\xi_0(t)\coloneqq \min\{t^{\delta+1},t^{g_0+1}\}$ and $\xi_1(t)\coloneqq \max\{t^{\delta+1},t^{g_0+1}\}$, for any $s,t\ge0$ and $u\in L_G(U)$ hold
        $$\xi_0(s)\dfrac{G(t)}{g_0+1} \le G(st) \le (g_0+1)\xi_1(s)G(t), \quad \mbox{and} \quad \xi_0(|u|_{L_G}) \le \int_{U}G(|u|)\ dx \le \xi_1(|u|_{L_G});$$

    \item[$(G_2)$]  $G(s)+G(t) \le G(s+t) \le 2^{g_0}(g_0+1)\big(G(s) + G(t)\big)$ for any $s,t\ge0$;

    \item[$(G_3)$] $1<\delta+1 \le \dfrac{g(s)s}{G(s)} \le g_0+1$;
\end{description}
\end{statement}

Therefore, $G$ is an $\mathcal{N}$-function that satisfies the $\Delta_2$-condition. Furthermore, a crucial implication of \eqref{Ga} is that
\begin{equation}\label{UE}
	\min\{\delta,1\} \dfrac{g(|x|)}{|x|} |\xi|^2 \le \mathfrak{a}_{i,j} \xi_i \xi_j \le \max\{g_0,1\} \dfrac{g(|x|)}{|x|} |\xi|^2,
\end{equation}
for any $x,\xi \in \mathbb{R}^N$ and $i,j =1,...,n$, where $\mathfrak{a}_{i,j} = \frac{\partial \mathfrak{A}_i}{\partial x_j}$ with $\mathfrak{A}(x)=g(|x|)\frac{x}{|x|}$. This inequality means that the equation
\begin{equation}\label{G-harmonic}
	-\mbox{div} \left(g(|\nabla {\bf v}|)\dfrac{\nabla {\bf v}}{|\nabla {\bf v}|}\right) = 0, \quad \mbox{in} \ U,
\end{equation}
is uniformly elliptic for $\frac{g(|x|)}{|x|}$ bounded and bounded away from zero, for any domain $U\subset \mathbb{R}^n$. Moreover, if ${\bf v} \in W^{1,G}(U;\mathbb{R}^m)$ satisfies \eqref{G-harmonic}, we say that ${\bf v}$ is $g$-harmonic in $U$.

In addition to the previously established properties, we also derive the following crucial inequality: if ${\bf u} = (u_1, \dots, u_m)$ belongs to $L_G(U; \mathbb{R}^m)$, then there exists a constant $c=c(\delta,g_0)>0$ such that
    \begin{equation}\label{equivprop}
        \dfrac{1}{c}\intav{U} G(|\nabla {\bf u}|) \ dx \le  \intav{U} \sum_{i=1}^m G(|\nabla u_i|) \ dx \le c \intav{U} G(|\nabla {\bf u}|)\ dx.
    \end{equation}
\bigskip

\subsection{Auxiliary results}

Let us denote
\[
\Omega_{x_0,\varrho} \coloneqq \Omega\cap B_\varrho(x_0)\quad\text{and}\quad (u)_{x_0,\varrho} \coloneqq \frac{1}{|\Omega_{x_0,\varrho}|}\int_{\Omega_{x_0,\varrho}}u(x)\:dx = \intav{\Omega_{x_0,\varrho}} u(x)dx.
\]
For our purposes, the analog of the $L^p$ norm and Campanato seminorm (see, for instance, Giusti's book \cite{Giusti2003}) will be given, for $\lambda\geq0$, by
\begin{equation}\label{DefPhi}
  \Phi(u)\coloneqq \int_\Omega \mathrm{G}(|u|)\:dx,\quad \Phi_\lambda(u) \coloneqq \sup_{\substack{x_0\in\Omega \\ \varrho>0}}\varrho^{-\lambda}\int_{\Omega_{x_0,\varrho}}\mathrm{G}(|u-u_{x_0,\varrho}|)\:dx
\end{equation}
These quantities are not homogeneous and hence not a norm/seminorm, but they capture the essential properties that will be used to prove the desired continuity.

We will work with functions satisfying
\begin{equation}\label{eq.campanato}
\Phi(u)+\Phi_\lambda(u)\leq \mathrm{C}_{\Phi}
\end{equation}
for some constant $\mathrm{C}_{\Phi}$.

\begin{proposition}[{\cite[Proposition 4.2]{daSSV}}]
If $u$ satisfies \eqref{eq.campanato} and $\lambda>n$ then there exists $\mathrm{C}=\mathrm{C}(n,g_0,\lambda)>0$ such that
\[
\sup_{\substack{x,y\in\Omega \\ x\neq y}}\frac{|u(x)-u(y)|}{|x-y|^\gamma}\leq \mathrm{C}\mathrm{G}^{-1}\left(\frac{2^{g_0}}{|B_1|}\Phi_\lambda(u)\right)
\]
with $\gamma=\frac{\lambda-n}{g_0+1}$.
\end{proposition}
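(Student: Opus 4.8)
The plan is to run the classical Campanato--Morrey iteration, replacing at each step the exact homogeneity of $t\mapsto t^p$ by the two-sided quasi-homogeneity of $G$ recorded in property $(G_1)$ of Statement~\ref{Statement}. After a harmless rescaling I will assume $\operatorname{diam}\Omega\le1$, so that it suffices to control $|u(x)-u(y)|/|x-y|^\gamma$ for pairs with $r\coloneqq|x-y|$ small, and I will use throughout that $|\Omega_{x_0,\varrho}|\simeq\varrho^n$ for small $\varrho$ (exact, with constant $|B_1|$, when $\Omega$ is a ball, which is the relevant case, and more generally by the measure-density property of Lipschitz domains).

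First I would fix $x_0\in\Omega$, set $\varrho_k\coloneqq 2^{-k}\varrho$ with $\varrho\le1$, and compare consecutive averages. Convexity and monotonicity of $G$ give, via Jensen,
\[
G\big(|u_{x_0,\varrho_{k+1}}-u_{x_0,\varrho_k}|\big)\le\intav{\Omega_{x_0,\varrho_{k+1}}}G\big(|u-u_{x_0,\varrho_k}|\big)\,dx\le\frac{1}{|\Omega_{x_0,\varrho_{k+1}}|}\int_{\Omega_{x_0,\varrho_k}}G\big(|u-u_{x_0,\varrho_k}|\big)\,dx,
\]
and the definition of $\Phi_\lambda$ together with the measure-density bound makes the right-hand side $\le C_0(n)\,\Phi_\lambda(u)\,\varrho_k^{\,\lambda-n}$. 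The next --- and main --- step is to extract the radial scaling. Since $\lambda>n$, I write $\varrho_k^{\,\lambda-n}=(\varrho_k^{\,\gamma})^{g_0+1}$ with $\gamma=\frac{\lambda-n}{g_0+1}>0$, note $\varrho_k^{\,\gamma}\le1$, and set $b\coloneqq G^{-1}\!\big((g_0+1)C_0\Phi_\lambda(u)\big)$. Applying $(G_1)$ with $s=\varrho_k^{\,\gamma}\le1$ (so that $\xi_0(s)=s^{g_0+1}$),
\[
G\big(\varrho_k^{\,\gamma}b\big)\ge\frac{(\varrho_k^{\,\gamma})^{g_0+1}}{g_0+1}G(b)=C_0\Phi_\lambda(u)\,\varrho_k^{\,\lambda-n}\ge G\big(|u_{x_0,\varrho_{k+1}}-u_{x_0,\varrho_k}|\big),
\]
and monotonicity of $G$ then yields $|u_{x_0,\varrho_{k+1}}-u_{x_0,\varrho_k}|\le\varrho_k^{\,\gamma}\,G^{-1}\!\big((g_0+1)C_0\Phi_\lambda(u)\big)$.

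From here the argument is the usual telescoping. Since $\sum_{k\ge0}\varrho_k^{\,\gamma}=\varrho^{\,\gamma}(1-2^{-\gamma})^{-1}<\infty$, the sequence $\{u_{x_0,\varrho_k}\}_k$ is Cauchy, hence (by Lebesgue differentiation) converges to the precise representative $u(x_0)$ for a.e. $x_0$, with $|u(x_0)-u_{x_0,\varrho}|\le\frac{\varrho^{\,\gamma}}{1-2^{-\gamma}}G^{-1}\!\big((g_0+1)C_0\Phi_\lambda(u)\big)$ for small $\varrho$; this same bound shows that representative is continuous. To pass to the Hölder seminorm, given $x\ne y$ with $r=|x-y|$ small I would take $B=B_{2r}(x)$, observe $y\in B_r(x)\subset B$ and $B_r(x)\subset B_{2r}(y)$, estimate $|u_{B_r(x)}-u_B|$ and $|u_{B_r(x)}-u_{B_{2r}(y)}|$ by the same Jensen + measure-density + $(G_1)$ computation, and combine with the previous pointwise bound at $x$ and at $y$ to obtain $|u(x)-u(y)|\le C\,r^{\gamma}\,G^{-1}\!\big(C_0'\Phi_\lambda(u)\big)$. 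Dividing by $|x-y|^\gamma$ and taking the supremum gives the claim; the explicit constant $\tfrac{2^{g_0}}{|B_1|}$ appearing inside $G^{-1}$ in the statement comes from a careful bookkeeping of the factors in the two displays above through $(G_1)$ and the $\Delta_2$-property of $G$, the residual multiplicative factor being absorbed into $C=C(n,g_0,\lambda)$.

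The hard part is the extraction step. In the power case $G(t)=t^p$ it is the trivial identity $\tau\le(C_0\Phi_\lambda\varrho^{\lambda-n})^{1/p}$, but for a general $\mathcal N$-function one cannot invert $G$ through a product, so one is forced to use the two-sided control $\xi_0(s)G(t)\lesssim G(st)\lesssim\xi_1(s)G(t)$: it is exactly the lower branch with exponent $g_0+1$ that produces the exponent $\gamma=\frac{\lambda-n}{g_0+1}$ (rather than $\delta+1$), and selecting $\xi_0(s)=s^{g_0+1}$ forces $s=\varrho^\gamma\le1$, which is why only small radii are used --- harmless here, since the complementary range of radii is controlled by $\Phi(u)$ via \eqref{eq.campanato} (or simply eliminated by the normalization $\operatorname{diam}\Omega\le1$). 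Everything else --- Jensen, the dyadic telescoping, the Lebesgue-point convergence, and the three-ball comparison --- is standard once this scaling bookkeeping is arranged.
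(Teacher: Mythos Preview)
Your argument is correct and is exactly the Campanato iteration in Orlicz-modular form: Jensen on dyadic balls, the lower branch of $(G_1)$ with $s=\varrho_k^{\gamma}\le1$ (so $\xi_0(s)=s^{g_0+1}$) to extract the exponent $\gamma=(\lambda-n)/(g_0+1)$, telescoping to the Lebesgue representative, and the standard three-ball comparison for the cross-term. Each step is sound; in particular your identification of \emph{why} the exponent is $g_0+1$ rather than $\delta+1$ --- namely that one needs the \emph{lower} bound $G(st)\ge(g_0+1)^{-1}\xi_0(s)G(t)$ with $s\le1$ --- is the heart of the matter.

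Note, however, that the paper does not supply its own proof of this proposition: it is quoted as \cite[Proposition~4.2]{daSSV} and used as a black box, so there is no in-paper argument to compare against. Your reconstruction is the natural one and is, in all likelihood, what appears in the cited reference. The only cosmetic point is the specific constant $2^{g_0}/|B_1|$ inside $G^{-1}$: your computation produces a constant of the form $(g_0+1)C_0(n)$ rather than this exact one, but since multiplicative constants can be passed through $G^{-1}$ via $(G_1)$ at the cost of a factor depending only on $\delta,g_0$ (absorbed into the outer $\mathrm{C}$), this is harmless and your closing remark handles it correctly.
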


The next theorem is a Campanato-type result in the context of scalar Orlicz modular.
\begin{theorem}[{\cite[Theorem 4.3]{daSSV}}]\label{campanato}
Let $u$ be a measurable function satisfying
\[
\int_\Omega \mathrm{G}(|u|)\:dx+\sup_{\substack{x_0\in\Omega \\ \varrho>0}}\varrho^{-\lambda}\inf_{\xi\in\mathbb{R}}\int_{\Omega_{x_0,\varrho}}\mathrm{G}(|u(x)-\xi|)\:dx\leq \mathrm{C}_0
\]
for some positive constant $\mathrm{C}_0$ and $\lambda>n$. Then, $u\in C^{0, \gamma}(\Omega)$ with $\gamma=\frac{\lambda-n}{g_0+1}$ and
\[
\sup_{\substack{x,y\in\Omega \\ x\neq y}}\frac{|u(x)-u(y)|}{|x-y|^\gamma}\leq \mathrm{C}\mathrm{G}^{-1}\left(\sup_{\substack{x_0\in\Omega \\ \varrho>0}}\varrho^{-\lambda}\inf_{\xi\in \mathbb{R}}\int_{\Omega_{x_0,\varrho}}\mathrm{G}(|u(x)-\xi|)dx\right)
\]
\end{theorem}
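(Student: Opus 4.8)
The plan is to adapt the classical Campanato embedding theorem to the Orlicz modular setting, with $\mathrm{G}$ playing the role of the power $t^p$ and $\mathrm{G}^{-1}$ the role of $t^{1/p}$. The decisive structural input is that the two--sided growth control of $\mathrm{G}$ recorded in $(G_1)$ forces $\mathrm{G}^{-1}$ to obey a uniform sub-homogeneity estimate with exponent $1/(g_0+1)$, which is precisely what pins down the H\"{o}lder exponent $\gamma=\frac{\lambda-n}{g_0+1}$. As a preliminary reduction, one replaces the $\inf_\xi$--quantity by one built from averages: writing $(u)_{x_0,\varrho}$ for the mean of $u$ over $\Omega_{x_0,\varrho}$, the triangle inequality, Jensen's inequality (convexity of $\mathrm{G}$, property $(g_2)$), and the quasi-triangle inequality $(G_2)$ give
\[
\int_{\Omega_{x_0,\varrho}}\mathrm{G}\big(|u-(u)_{x_0,\varrho}|\big)\,dx\;\le\;2^{g_0+1}(g_0+1)\,\inf_{\xi\in\mathbb{R}}\int_{\Omega_{x_0,\varrho}}\mathrm{G}\big(|u-\xi|\big)\,dx.
\]
Setting $\Psi\coloneqq\sup_{x_0,\varrho}\varrho^{-\lambda}\int_{\Omega_{x_0,\varrho}}\mathrm{G}(|u-(u)_{x_0,\varrho}|)\,dx$, one then has $\Psi\le 2^{g_0+1}(g_0+1)\,\mathrm{C}_0$, and it suffices to carry the argument with $\Psi$ and to undo this comparison at the end.

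Next I estimate the oscillation of the averages across dyadic scales. Fixing $x_0\in\Omega$ and $0<\rho\le R$ small, the constant $|(u)_{x_0,\rho}-(u)_{x_0,R}|$ is bounded by the $\Omega_{x_0,\rho}$--average of $|u-(u)_{x_0,\rho}|+|u-(u)_{x_0,R}|$; applying $\mathrm{G}$, Jensen, $(G_2)$, and the measure density $|\Omega_{x_0,\varrho}|\ge c_\Omega\varrho^n$ (available since $\Omega$ is a bounded Lipschitz domain), together with $\Omega_{x_0,\rho}\subset\Omega_{x_0,R}$, yields
\[
\mathrm{G}\big(|(u)_{x_0,\rho}-(u)_{x_0,R}|\big)\;\le\;\mathrm{C}(n,g_0,c_\Omega)\,\Psi\,\frac{R^{\lambda}}{\rho^{n}}.
\]
Taking $\rho=\varrho_{k+1}=2^{-(k+1)}R_0$ and $R=\varrho_k=2^{-k}R_0$ gives $\mathrm{G}(|(u)_{x_0,\varrho_{k+1}}-(u)_{x_0,\varrho_k}|)\le\mathrm{C}\,\Psi R_0^{\lambda-n}2^{-k(\lambda-n)}$. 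I then invoke the key scaling of $\mathrm{G}^{-1}$, deduced from $(G_1)$: for every $\sigma\in(0,1]$ and $\tau\ge0$,
\[
\mathrm{G}^{-1}(\sigma\tau)\;\le\;(g_0+1)^{1/(g_0+1)}\,\sigma^{1/(g_0+1)}\,\mathrm{G}^{-1}(\tau),
\]
so that $|(u)_{x_0,\varrho_{k+1}}-(u)_{x_0,\varrho_k}|\le\mathrm{C}\,2^{-k\gamma}\mathrm{G}^{-1}(\mathrm{C}\Psi R_0^{\lambda-n})$ with $\gamma=\frac{\lambda-n}{g_0+1}>0$. Summing this geometric series shows $\{(u)_{x_0,\varrho_k}\}_k$ is Cauchy, uniformly in $x_0$; since $\int_\Omega\mathrm{G}(|u|)\,dx<\infty$ and $\mathrm{G}$ is superlinear we have $u\in L^1(\Omega)$, so by Lebesgue differentiation the limit coincides a.e.\ with $u$, giving a representative $\bar u$ with $\bar u(x_0)=\lim_{\varrho\to0}(u)_{x_0,\varrho}$ and $|\bar u(x_0)-(u)_{x_0,R_0}|\le\mathrm{C}\,\mathrm{G}^{-1}(\mathrm{C}\Psi R_0^{\lambda-n})$.

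For the two--point estimate, take $x\neq y$ with $d=|x-y|$ small and apply the previous bound with base radius $2d$ at $x$ and $d$ at $y$; the overlap term $|(u)_{x,2d}-(u)_{y,d}|$ is controlled by averaging over $B_d(y)\cap\Omega\subset B_{2d}(x)\cap\Omega$ exactly as in the oscillation step. Combining the three contributions yields $|\bar u(x)-\bar u(y)|\le\mathrm{C}\,\mathrm{G}^{-1}(\mathrm{C}\Psi\,d^{\lambda-n})$. A last application of the $\mathrm{G}^{-1}$--scaling with $\sigma=d^{\lambda-n}\le 1$ extracts $d^{\gamma}=|x-y|^{\gamma}$, while the companion estimate $\mathrm{G}^{-1}(C\tau)\le(C(g_0+1))^{1/(\delta+1)}\mathrm{G}^{-1}(\tau)$ for $C\ge1$ (again from $(G_1)$) converts $\mathrm{G}^{-1}(\mathrm{C}\Psi)$ into a constant times $\mathrm{G}^{-1}\big(\sup_{x_0,\varrho}\varrho^{-\lambda}\inf_{\xi}\int_{\Omega_{x_0,\varrho}}\mathrm{G}(|u-\xi|)\,dx\big)$, which is the asserted inequality, with $\mathrm{C}=\mathrm{C}(n,g_0,\lambda)$ (and a dependence on $c_\Omega$, i.e.\ on the Lipschitz character of $\Omega$).

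The main obstacle is the single place where the non-power nature of $\mathrm{G}$ bites: one cannot simply divide by $\varrho^{\lambda/p}$ as in the classical argument, so the modular must be carried through every averaging step and $\mathrm{G}^{-1}$ extracted with the correct, worst-case exponent $1/(g_0+1)$. The linchpin is therefore the uniform sub-homogeneity $\mathrm{G}^{-1}(\sigma\tau)\le\mathrm{C}_{g_0}\sigma^{1/(g_0+1)}\mathrm{G}^{-1}(\tau)$ for $\sigma\le1$ (and its counterpart for $\sigma\ge1$), together with checking that convexity and $\Delta_2$ make Jensen and the quasi-triangle inequality $(G_2)$ available with constants depending only on $\delta,g_0$; everything else is the standard dyadic Campanato iteration.
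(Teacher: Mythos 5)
Your proposal is correct, and indeed this is the natural dyadic Campanato iteration carried in the Orlicz modular. Note, though, that the paper being reviewed does not prove this theorem: it is imported verbatim from \cite[Theorem 4.3]{daSSV}, so there is no in-paper proof to compare against. That said, every structural ingredient you use is available and applied soundly: the $\inf_\xi$-to-mean comparison follows from convexity (Jensen twice) and the quasi-triangle bound in $(G_2)$; the oscillation-of-averages estimate at nested scales uses only the measure density of the bounded Lipschitz domain; and the linchpin — the sub-homogeneity $\mathrm{G}^{-1}(\sigma\tau)\le(g_0+1)^{1/(g_0+1)}\sigma^{1/(g_0+1)}\mathrm{G}^{-1}(\tau)$ for $\sigma\in(0,1]$, together with its companion $\mathrm{G}^{-1}(C\tau)\le((g_0+1)C)^{1/(\delta+1)}\mathrm{G}^{-1}(\tau)$ for $C\ge1$ — is exactly what $(G_1)$ gives after inverting $\mathrm{G}$. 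The geometric summation then pins down $\gamma=\frac{\lambda-n}{g_0+1}$, and the two-point step by overlap is standard. This is almost certainly the same argument as in \cite{daSSV}; I see no gap.
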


\bigskip

\begin{theorem}[{\cite[Theorem 4.1]{MascPapi96}}]\label{Harnack-Ineq}
Let $G \in \Delta_2 \cap \nabla_2$ and $u \in W^{1,G}(\Omega)$ be a local positive minimizer of $\mathcal{J}_G$. Then for every $R$ with $Q_R \subset \Omega$,
\[
\sup_{Q_R} u \leq \mathrm{c}({\verb"universal"}) \inf_{Q_R} u.
\]
\end{theorem}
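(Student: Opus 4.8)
The plan is to recognize this as a Harnack inequality for weak solutions of a uniformly elliptic quasilinear equation with Orlicz growth and to run a Moser iteration adapted to the Orlicz--Sobolev setting, following \cite{MascPapi96} (see also \cite{L}). First I would remove the free--boundary term: on a cube $Q_{2R}\Subset\Omega$ on which $u>0$, any admissible competitor $v$ with $v=u$ on $\partial Q_{2R}$ satisfies $\int_{Q_{2R}}\lambda\chi_{\{v>0\}}\le\lambda|Q_{2R}|=\int_{Q_{2R}}\lambda\chi_{\{u>0\}}$, so minimality of $\mathcal{J}_G$ forces $\int_{Q_{2R}}G(|\nabla u|)\le\int_{Q_{2R}}G(|\nabla v|)$; hence $u$ is a local minimizer of the Orlicz--Dirichlet energy and, by the first variation, a weak solution of \eqref{G-harmonic}, i.e. $g$-harmonic. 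By \eqref{UE} this operator is uniformly elliptic with respect to the weight $g(|\nabla u|)/|\nabla u|$, and because $G\in\Delta_2\cap\nabla_2$ both $G$ and its complementary function are doubling --- equivalently $G$ is comparable to a function trapped between two powers $t^{p_1},\,t^{p_2}$ with $1<p_1\le p_2$ --- which supplies the Orlicz--Sobolev embedding and all the growth bounds needed below.

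The core of the argument is Moser's iteration. Testing \eqref{G-harmonic} with $\eta^{p_2}u^{\gamma}$ for $\gamma\ne-1$ and a cutoff $\eta$, and using the structure of $g$ to absorb the $g'$-terms, I would derive a Caccioppoli-type estimate controlling the Orlicz energy of a suitable power $w=u^{\theta(\gamma)}$ on $Q_{\rho}$ by $w$ itself on a slightly larger cube. Inserting this into the Orlicz--Sobolev inequality yields a reverse-H\"older gain, and iterating over a geometric sequence of radii and exponents --- where the two-sided power comparison for $G$ keeps the step constants summable --- produces, for positive powers, $\sup_{Q_R}u\le \mathrm{C}\big(\intav{Q_{2R}}u^{p_0}\big)^{1/p_0}$ for some small $p_0>0$, and, for negative powers (legitimate since $u>0$), $\big(\intav{Q_{2R}}u^{-p_0}\big)^{-1/p_0}\ge \mathrm{C}^{-1}\inf_{Q_R}u$, with $\mathrm{C}$ depending only on $n$ and the $\Delta_2\cap\nabla_2$ (equivalently Lieberman) constants of $G$.

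It remains to bridge positive and negative moments. Testing \eqref{G-harmonic} with $\eta^{p_2}u^{-1}$ gives an energy bound for $\nabla\log u$ against the cutoff, so that $\log u\in\mathrm{BMO}(Q_{3R/2})$ with universal norm; the John--Nirenberg inequality then yields a (possibly smaller) $p_0>0$ with $\big(\intav{Q_{2R}}u^{p_0}\big)^{1/p_0}\le \mathrm{C}\big(\intav{Q_{2R}}u^{-p_0}\big)^{-1/p_0}$. Chaining the three displays gives $\sup_{Q_R}u\le \mathrm{C}\inf_{Q_R}u$ with the asserted universal constant; alternatively one simply invokes \cite[Theorem 4.1]{MascPapi96} verbatim.

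The main obstacle is the lack of homogeneity of $G$: unlike the $p$-Laplacian, testing with a power of $u$ does not reproduce a clean power structure, so the Caccioppoli estimates carry error terms that must be handled by splitting $Q_\rho$ according to the size of $|\nabla u|$ (and of $u$) and applying the two-sided controls $\xi_0,\xi_1$ from $(G_1)$ together with $(g_1)$--$(G_3)$ of Statement \ref{Statement}. The delicate point is to keep every constant in the iteration independent of the step so that the infinite product converges --- and this is precisely where the quantitative form of $G\in\Delta_2\cap\nabla_2$ (i.e. $\delta+1\le g(s)s/G(s)\le g_0+1$) is indispensable.
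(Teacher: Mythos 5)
The paper itself does not prove this statement: Theorem~\ref{Harnack-Ineq} is imported verbatim as a black-box citation of Mascolo--Papi \cite[Theorem 4.1]{MascPapi96}, and the authors simply invoke it at the end of the proof of Proposition~\ref{MainProp02}. Your sketch therefore cannot be compared against an in-paper argument; it is a reconstruction of a cited external result. That said, a few remarks are worth making. Your opening reduction, that a positive local minimizer of $\mathcal{J}_G$ on a cube where $u>0$ is automatically a local minimizer of $\int G(|\nabla u|)\,dx$ because the measure term $\lambda\chi_{\{v>0\}}$ is maximized precisely when $v=u$, is exactly the (implicit) observation that makes the citation legitimate, since \cite{MascPapi96} is stated for minimizers of the Orlicz--Dirichlet energy rather than for the Alt--Caffarelli functional. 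Beyond that, your route differs slightly in flavor from the reference: you pass to the Euler--Lagrange equation \eqref{G-harmonic} and run Moser iteration with power test functions and a BMO/John--Nirenberg bridge, whereas Mascolo--Papi argue directly at the level of minimizers in De~Giorgi style, exploiting only the $\Delta_2\cap\nabla_2$ structure and never differentiating the functional; the variational route avoids the technicalities of testing a fully nonlinear weak equation. Either path is viable. One small caveat in your write-up: inferring that $G$ is ``trapped between two powers'' from $\Delta_2\cap\nabla_2$ is weaker than the quantitative Lieberman bound $(G_3)$ you later rely on to make the iteration constants summable; under the stated hypotheses ($G\in\Delta_2\cap\nabla_2$ only) you should phrase the iteration in terms of the indices coming from the doubling constants $k_G,\ell_G$, as Mascolo--Papi do, rather than borrow $\delta,g_0$ from the Lieberman class, which is a stronger assumption the theorem as stated does not impose.
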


%Moreover, by standard covering arguments, we have also:

%\begin{theorem}[{\cite[Theorem 4.2]{MascPapi96}}]Assume $\Omega$ is a bounded connected open set of $\mathbb{R}^n$ and let $E \Subset \Omega$. Let $G \in \Delta_2 \cap \nabla_2$ and $u \in W^{1} L^G(\Omega)$ be a positive local minimizer of $\mathcal{J}_G$. Then there exists a constant $\mathrm{C}(E, m, \Omega)$ such that
%\[
%\sup_{x \in E} u(x) \leq \mathrm{C}(E, m, \Omega) \inf_{x \in E} u(x).
%\]
%\end{theorem}

The following result is well known in the scalar setting (see, for example, \cite[Lemma 2.5]{MW08} and \cite[Lemma 3.1]{BM}). However, as we could not find a corresponding version in the vectorial case, we included the proof here for completeness and to ensure clarity for the reader. 
\begin{lemma}\label{lemreg}
    Consider $G\in \mathcal{G}(\delta,g_0)$. Let ${\bf u} \in W_{loc}^{1,1}(U;\mathbb{R}^m) \cap L^1(U;\mathbb{R}^m)$, and $0<\alpha \le \frac{\delta}{g_0}\le1$. Suppose $U^{\prime} \Subset U$ such that
        $$
        \intav{B_r(x_0)} G\big(|\nabla {\bf u}(x)|\big)\ dx \le \mathfrak{L} r^{\alpha-1},
        $$
    for any $x_0 \in U^{\prime}$, and $0< r \le R_0 \le \mbox{dist}(U^{\prime}, \partial U)$. Then, ${\bf u} \in C^{0,\alpha}(U^{\prime};\mathbb{R}^m)$, with the following estimate
        $$[{\bf u} ]_{C^{0,\alpha}(U^{\prime};\mathbb{R}^m)} \le \mathrm{C} G^{-1}(\mathfrak{L}),$$
    with $\mathrm{C}=\mathrm{C}(\alpha,n,m,\delta,g_0)>0$.
\end{lemma}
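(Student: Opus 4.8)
The plan is to deduce a pointwise Campanato-type decay for the modular of $\nabla {\bf u}$ from the hypothesis, and then invoke the vectorial analog of Theorem \ref{campanato} (applied componentwise). First I would fix $x_0 \in U'$ and $0 < r \le R_0$, and compare the average of ${\bf u}$ on $B_r(x_0)$ with the average on $B_{r/2}(x_0)$ (or, more generally, run a standard dyadic telescoping argument on the radii $r_k = 2^{-k}r$). The building block is Poincaré's inequality \eqref{Poincare} in Orlicz form: applied on each ball $B_{r_k}(x_0)$, it controls $\fint_{B_{r_k}} G\big(\tau |u_i - (u_i)_{B_{r_k}}|/r_k\big)\,dx$ by $\fint_{B_{r_k}} G(|\nabla u_i|)\,dx \le \mathfrak{L}\, r_k^{\alpha - 1}$ (using \eqref{equivprop} to pass between $\sum_i G(|\nabla u_i|)$ and $G(|\nabla {\bf u}|)$). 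Comparing consecutive averages and using the doubling/$\Delta_2$ structure encoded in $(G_1)$–$(G_2)$ and $(g_1)$, one estimates $|(u_i)_{B_{r_{k+1}}} - (u_i)_{B_{r_k}}|$ and sums the geometric-type series; the exponent $\alpha \le \delta/g_0$ is exactly what makes the resulting series summable and yields the Hölder modulus with constant a multiple of $G^{-1}(\mathfrak{L})$.

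More precisely, I would aim to show that ${\bf u}$ satisfies a bound of the form
\[
\varrho^{-\lambda}\int_{B_\varrho(x_0)} G\big(|{\bf u}(x) - {\bf u}_{x_0,\varrho}|\big)\,dx \le \mathrm{C}\,\mathfrak{L}
\]
with $\lambda = n + \alpha(g_0+1)$, so that $\gamma = \frac{\lambda - n}{g_0+1} = \alpha$ in Theorem \ref{campanato}; one then reads off $[{\bf u}]_{C^{0,\alpha}} \le \mathrm{C}\,G^{-1}(\mathrm{C}\mathfrak{L}) \le \mathrm{C}\,G^{-1}(\mathfrak{L})$, where the last step absorbs the constant inside $G^{-1}$ using the growth bounds $(G_1)$ (since $G^{-1}(Ct) \le \max\{C^{1/(\delta+1)}, C^{1/(g_0+1)}\} G^{-1}(t)$, up to relabeling constants). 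To get the displayed Campanato bound: for $\varrho \le R_0$ choose $k$ with $r_k \le \varrho < r_{k-1}$; Poincaré on $B_\varrho$ gives $\int_{B_\varrho} G\big(\tau|u_i - (u_i)_{B_\varrho}|/\varrho\big)\,dx \le \int_{B_\varrho} G(|\nabla u_i|)\,dx \le \mathfrak{L}\,\varrho^{\alpha - 1}\,|B_\varrho|$, and then $(G_1)$ converts the factor $1/\varrho$ inside $G$ into a factor $\varrho^{-(g_0+1)}$ outside (on the relevant range), producing exactly the power $\varrho^{\alpha(g_0+1)}$ after rearranging, hence $\varrho^{-\lambda}\int_{B_\varrho} G(|u_i - (u_i)_{B_\varrho}|) \le \mathrm{C}\mathfrak{L}$. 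Summing over $i$ via \eqref{equivprop} and $(G_2)$ handles the vectorial norm $|{\bf u} - {\bf u}_{x_0,\varrho}|$.

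The main obstacle I anticipate is the careful bookkeeping of \emph{modular-versus-scaling} mismatches: unlike the power case, the factor $\tau/\varrho$ inside $G$ in \eqref{Poincare} cannot be pulled out linearly, so every step requires the two-sided bounds $\xi_0(s)\tfrac{G(t)}{g_0+1} \le G(st) \le (g_0+1)\xi_1(s)G(t)$ from $(G_1)$, and one must track whether the relevant argument is $\ge 1$ or $\le 1$ to know which of $s^{\delta+1}, s^{g_0+1}$ is active. This is precisely where the constraint $\alpha \le \delta/g_0$ enters — it is the threshold guaranteeing that the ``bad'' exponent in the telescoping estimate still produces a convergent (geometric-type) series and a genuinely Hölder, rather than merely continuous, bound. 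A secondary technical point is that ${\bf u}$ is only assumed $W^{1,1}_{loc}\cap L^1$, so I would first note that the modular hypothesis upgrades $\nabla {\bf u}$ to $L_G$ locally (again by $(G_1)$, the finiteness of $\fint G(|\nabla{\bf u}|)$ on small balls gives membership in the Orlicz class), which legitimizes the use of the Orlicz Poincaré inequality \eqref{Poincare} on those balls. Once the Campanato estimate \eqref{eq.campanato}-type bound is in hand, Theorem \ref{campanato} (applied to each component, or a trivial vectorial restatement thereof) closes the argument.
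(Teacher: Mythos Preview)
Your approach is essentially the paper's: pass to components via \eqref{equivprop}, apply the Orlicz Poincar\'e inequality \eqref{Poincare} on $B_\varrho(x_0)$, use $(G_1)$ to move the radius out of $G$, and then invoke Theorem~\ref{campanato} componentwise with $\lambda = n + \alpha(g_0+1)$; the dyadic telescoping sketch in your first paragraph is not needed once the Campanato bound is in hand.

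One correction worth flagging: when you extract $1/\varrho$ via $(G_1)$, the factor that appears is $\varrho^{\delta+1}$, not $\varrho^{g_0+1}$. Indeed, for $s=\varrho/\tau<1$ one has $\xi_1(s)=s^{\delta+1}$, so $G(|v|)\le (g_0+1)(\varrho/\tau)^{\delta+1}G(\tau|v|/\varrho)$, exactly as the paper writes (the $c_1/r^{\delta+1}$ on the left). Combining $\varrho^{\delta+1}$ with the hypothesis $\varrho^{\alpha-1}$ yields $\varrho^{\delta+\alpha}$, and the passage to $\varrho^{\alpha(g_0+1)}$ for $\varrho\le 1$ requires $\delta+\alpha\ge \alpha(g_0+1)$, i.e.\ $\alpha\le \delta/g_0$. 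With your stated exponent $g_0+1$ this constraint would be vacuous (one would only need $\alpha\le 1$), so your explanation of ``precisely where $\alpha\le\delta/g_0$ enters'' would not match the computation. This is a bookkeeping slip rather than a structural gap; with the correct exponent, your argument and the paper's coincide.
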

\begin{proof}
    First of all, by \eqref{equivprop} we get
        \begin{equation}\label{27}
            \intav{B_r(x_0)} \sum_{i=1}^m G\big(|\nabla u_i(x)|\big) \le \mathrm{C} \mathfrak{L}r^{\alpha-1},
        \end{equation}
    for some $\mathrm{C}>0$. Next, by Poincar\'{e}'s inequality (see \eqref{Poincare}) and $(G_1)$, for any $i \in \{1,\dots,m\}$ there exists a constant $c_1=c_1(g_0)>0$ such 
        \begin{eqnarray*}
          \dfrac{c_1}{r^{\delta+1}} \intav{B_r(x_0)} G(|u_i(x) - (u_i)_{x_0,r}|) \ dx &\le& \intav{B_r(x_0)} G\left( \tau\dfrac{|u_i(x) - (u_i)_{x_0,r}|}{2r}\right)\ dx \\
          &\le& \intav{B_r(x_0)} G\big(|\nabla u_i(x)|\big)\ dx, \quad x \in U^{\prime}, \ r \le R_0.
        \end{eqnarray*}
    Then, since $0<\alpha \le \frac{\delta}{g_0}$, \eqref{27} implies
        $$\intav{B_r(x_0)} \sum_{i=1}^m G(|u_i(x) - (u_i)_{x_0,r}|) \ dx \le \dfrac{r^{\delta+1}}{c_1} \intav{B_r(x_0)} \sum_{i=1}^m G(|\nabla u_i(x)|) \ dx \le C\mathfrak{L} r^{\alpha(g_0+1)}.$$
    Therefore, by invoking Theorem \ref{campanato} we get
        $$
        \sum_{i=1}^m\ [u_i]_{C^{0,\alpha}(U^{\prime})} \le CG^{-1}(\mathfrak{L}),$$
    and the result follows.
\end{proof}

%{\color{red}\begin{remark}
%    Note that in the previous result, we restricted on $\alpha$, namely $0 < \alpha \le \frac{\delta}{g_0} \le 1$. It is worth emphasizing that in the $p$-Laplacian case -- i.e., when $G(t) = t^p$ -- this condition is automatically satisfied, since $\delta = p = g_0$. This leads us to a natural question: Is it possible to remove this condition and still obtain the same result?
%\end{remark}}

Now, for a given $\mathcal{N}$-function $G\in \mathcal{G}(\delta,g_0)$, define the map $\mathbf{V}_G:\mathbb{R}^n \to \mathbb{R}^n$ by
    \begin{equation}\label{defVg}
        \mathbf{V}_G(z) \coloneqq \left(\dfrac{G^{\prime}(|z|)}{|z|}\right)^{\frac{1}{2}}z.
    \end{equation}
Furthermore, we have the following monotonicity property (see, for instance, \cite[Lemma 2.5]{BB} in a double-phase structure)
    \begin{equation}\label{SSMI}
        \mathrm{C}|\mathbf{V}_G(z_1) - \mathbf{V}_G(z_2)|^2 + \dfrac{G^{\prime}(|z_1|)}{|z_1|} \ z_1\cdot (z_2 - z_1) \le G(|z_2|) - G(|z_1|),
    \end{equation}
for some $\mathrm{C}>0$. In addition, by direct computations, using Jensen's inequality, for any $v,w \in L_G(B_r(x_0))$ and $L \in \mathbb{R}$ holds
    \begin{equation}\label{211}
        \intav{B_r(x_0)} \big|\mathbf{V}_G(\nabla v) - \big(\mathbf{V}_G(\nabla v)\big)_{x_0,r}\big|^2 dx \le 4 \intav{B_r(x_0)} \big|\mathbf{V}_G(\nabla v) - L\big|^2 dx,
    \end{equation}
and there exists a constant $\mathrm{C}=\mathrm{C}(g_0)>0$ such that
    \begin{equation}\label{212}
        \intav{B_r(x_0)} \big|\big(\mathbf{V}_G(\nabla v)\big)_{x_0,r} - \big(\mathbf{V}_G(\nabla w)\big)_{x_0,r}\big|^2 dx \le \mathrm{C} \intav{B_r(x_0)} \big|\mathbf{V}_G(\nabla v) - \mathbf{V}_G(\nabla w)\big|^2 dx.
    \end{equation}

The next lemma deals with the excess functional and will be used to prove the H\"{o}lder regularity of $(\kappa,\beta)$-almost-minimizers of $\mathcal{J}_G$.

\begin{lemma}[{see \cite[Lemma 2.10 and Theorem 6.4]{DSV}}]\label{L22}
    Let $G\in \mathcal{G}(\delta,g_0)$, and let $\mathbf{V}_G$ be the function defined in \eqref{defVg}.
        \begin{enumerate}[$(a)$]
            \item The map $\mathbf{V}_G$ is invertible, and there exists $\gamma>0$, which depends only on $\delta$ and $g_0$, such that both $\mathbf{V}_G$ and its inverse $\mathbf{V}_G^{-1}$ are $\gamma$-H\"{o}lder continuous on $\mathbb{R}^n$;

            \item Let $v\in L_G(U)$ be a $g$-harmonic function in $U$. Then, there exists $\sigma=\sigma(n,\delta,g_0)\in(0,1)$ such that for any balls $B \Subset U$ and any $\tau \in (0,1)$ holds
        $$\intav{\tau B} \big|\mathbf{V}_G(\nabla v) - \big(\mathbf{V}_G(\nabla v)\big)_{\tau B}\big|^2 \ dx \le \mathrm{C} \tau^\sigma \intav{B} \big|\mathbf{V}_G(\nabla v) - \big(\mathbf{V}_G(\nabla v)\big)_{B}\big|^2 \ dx,$$
        \end{enumerate}
    where $\mathrm{C}=\mathrm{C}(n,\delta,g_0)>0$.
\end{lemma}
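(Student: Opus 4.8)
The plan is to obtain both assertions from the structural machinery of \cite{DSV}: part $(a)$ from the radial form of $\mathbf{V}_G$ together with the monotonicity inequality \eqref{SSMI} and the power bounds in $(g_1)$, and part $(b)$ from the uniform ellipticity \eqref{UE} of \eqref{G-harmonic} combined with Lieberman's interior $C^{1,\alpha}$ estimates \cite{L}.

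For $(a)$, I would first record that $\mathbf{V}_G$ is radial: writing $\psi(t)\coloneqq\sqrt{t\,g(t)}$, one has $\mathbf{V}_G(z)=\psi(|z|)\,z/|z|$ and $|\mathbf{V}_G(z)|=\psi(|z|)$. Since \eqref{Ga} forces $g'>0$, the map $t\mapsto t\,g(t)$ is strictly increasing, hence so is $\psi$, and $(g_1)$ gives $\psi(t)\to0$ as $t\to0^+$ and $\psi(t)\to+\infty$ as $t\to+\infty$; thus $\psi\colon[0,\infty)\to[0,\infty)$ is a continuous increasing bijection, so $\mathbf{V}_G$ is a homeomorphism of $\mathbb{R}^n$ with $\mathbf{V}_G^{-1}(w)=\psi^{-1}(|w|)\,w/|w|$. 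The quantitative modulus of continuity would then follow from the two-sided comparison
$$
|\mathbf{V}_G(z_1)-\mathbf{V}_G(z_2)|^{2}\;\simeq\;\frac{g(|z_1|+|z_2|)}{|z_1|+|z_2|}\,|z_1-z_2|^{2},
$$
valid for all $z_1,z_2\in\mathbb{R}^n$ with constants depending only on $\delta,g_0$: the bound from above is contained in \eqref{SSMI}, once one notes that its right-hand side is comparable to the shifted modular $\tfrac{g(|z_1|+|z_2|)}{|z_1|+|z_2|}|z_1-z_2|^2$, while the matching lower bound follows from convexity of $G$ and monotonicity of $\mathbf{V}_G$ by the same standard computation. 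Combining this comparison with the power estimates $c\min\{t^{\delta},t^{g_0}\}\le g(t)\le C\max\{t^{\delta},t^{g_0}\}$ (again $(g_1)$) then yields that $\mathbf{V}_G$ and $\mathbf{V}_G^{-1}$ are Hölder continuous with an exponent $\gamma=\gamma(\delta,g_0)>0$, with the constants controlled on bounded sets; this is the content of \cite[Lemma 2.10]{DSV}.

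For $(b)$, the idea is that $g$-harmonic functions inherit the interior gradient regularity of uniformly elliptic quasilinear equations. By \eqref{UE} the equation \eqref{G-harmonic} is uniformly elliptic wherever $|\nabla v|$ is bounded and bounded away from $0$, so Lieberman's a priori estimates \cite{L} --- together with the De Giorgi--Nash--Moser iteration needed to absorb the degenerate/singular set $\{\nabla v=0\}$ --- give that $\nabla v$, hence $\mathbf{V}_G(\nabla v)$ by $(a)$, is locally Hölder continuous with a universal exponent. The decay estimate is then obtained by the usual Campanato scheme: for $g$-harmonic $v$ on a ball $B\Subset U$ of radius $r$ one first establishes the a priori excess bound
$$
r^{2\sigma}\,\bigl[\mathbf{V}_G(\nabla v)\bigr]_{C^{0,\sigma}(\tfrac12 B)}^{2}\;\le\;\mathrm{C}\intav{B}\bigl|\mathbf{V}_G(\nabla v)-(\mathbf{V}_G(\nabla v))_{B}\bigr|^{2}\,dx,
$$
by a compactness/energy argument that uses \eqref{211} and \eqref{212} to manipulate the excess and \eqref{SSMI} for the comparison; for $\tau\in(0,\tfrac12]$ one then bounds the oscillation of $\mathbf{V}_G(\nabla v)$ over $\tau B$ by $[\mathbf{V}_G(\nabla v)]_{C^{0,\sigma}(\tfrac12 B)}^{2}\,(\tau r)^{2\sigma}$, which produces the gain $\tau^{2\sigma}$, and the remaining range $\tau\in(\tfrac12,1)$ is absorbed into $\mathrm{C}$. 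Choosing $\sigma$ to be half the Hölder exponent of $\mathbf{V}_G(\nabla v)$ gives the asserted inequality, which is exactly \cite[Theorem 6.4]{DSV}.

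The main obstacle in both parts is the uniformity of all exponents and constants across the class $\mathcal{G}(\delta,g_0)$: in $(a)$ one must verify that the comparison constants, and the constant $\mathrm{C}$ in \eqref{SSMI}, depend only on $\delta$ and $g_0$, which is precisely what $(g_1)$ and \eqref{Ga} guarantee; in $(b)$ the delicate point is upgrading Lieberman's qualitative $C^{1,\alpha}$ bound for $g$-harmonic functions into the quantitative excess-decay estimate above, valid up to the possibly large set $\{\nabla v=0\}$. Since both difficulties are already settled in \cite{L,DSV}, the proof reduces to checking that every $G\in\mathcal{G}(\delta,g_0)$ satisfies the hypotheses of those works, which is immediate from Statement \ref{Statement} and \eqref{UE}.
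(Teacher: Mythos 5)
The paper offers no proof of this lemma: it is quoted verbatim as a known result from \cite[Lemma~2.10 and Theorem~6.4]{DSV}, so the only comparison available is between your sketch and the argument in that reference. Your reconstruction is faithful to it — part $(a)$ via the radial representation $\mathbf{V}_G(z)=\sqrt{|z|\,g(|z|)}\,z/|z|$, the two-sided comparison of $|\mathbf{V}_G(z_1)-\mathbf{V}_G(z_2)|^2$ with the shifted modular (one direction of which is \eqref{SSMI}), and the power bounds from $(g_1)$; part $(b)$ via Lieberman's interior $C^{1,\alpha}$ estimates for $g$-harmonic functions together with a Campanato excess-decay scheme — and you correctly reduce the only remaining work to verifying that $G\in\mathcal{G}(\delta,g_0)$ meets the hypotheses of \cite{DSV} and \cite{L}, which Statement~\ref{Statement} and \eqref{UE} supply.
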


In the next lemma, as well as throughout the paper, we shall refer to $u^*$, a scalar function, as the $g$-harmonic replacement of $u$ in $B_\mathcal{S}(kx_0)\subset U$, meaning the unique $g$-harmonic function in $B_\mathcal{S}(kx_0)$ with the same trace as $u$ on $\partial B_\mathcal{S}(kx_0)$.

\begin{lemma}\label{controlVg}
    Assume that $G\in \mathcal{G}(\delta,g_0)$. Consider ${\bf u}=(u_1,\dots, u_m)$ a $(\kappa,\beta)$-almost-minimizer of $\mathcal{J}_G$ in $U$, and let $x_0 \in U$ and $B_\mathcal{S}(kx_0) \Subset U$. If $u_i^*$ is the $g$-harmonic replacement of $u_i$ in $B_\mathcal{S}(kx_0)$, for each $i \in \{1,\dots,m\}$, then
        $$
        \int_{B_\mathcal{S}(kx_0)} \sum_{i=1}^m \big|\mathbf{V}_G(\nabla u_i) - \mathbf{V}_G(\nabla u_i^*)\big|^2 \ dx \le \mathrm{C}\kappa s^\beta \int_{B_\mathcal{S}(kx_0)} \sum_{i=1}^m G(|\nabla u_i|) \ dx + \mathrm{C}\lambda s^n,
        $$
    where $\mathbf{V}_G$ is defined in \eqref{defVg}, and $\mathrm{C}=\mathrm{C}(n,\delta,g_0)>0$.
\end{lemma}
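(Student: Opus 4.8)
The plan is to estimate the left-hand side by exploiting the monotonicity inequality \eqref{SSMI} together with the almost-minimizing inequality \eqref{VarIneqAlmMin}, applied with the competitor obtained by replacing each component $u_i$ with its $g$-harmonic replacement $u_i^*$. First, for each fixed $i$, apply \eqref{SSMI} with $z_1 = \nabla u_i^*$ and $z_2 = \nabla u_i$ and integrate over $B_{\mathcal{S}}(kx_0)$; this gives
\[
\mathrm{C}\int_{B_{\mathcal{S}}(kx_0)}\bigl|\mathbf{V}_G(\nabla u_i)-\mathbf{V}_G(\nabla u_i^*)\bigr|^2\,dx \le \int_{B_{\mathcal{S}}(kx_0)}\Bigl(G(|\nabla u_i|)-G(|\nabla u_i^*|)\Bigr)\,dx - \int_{B_{\mathcal{S}}(kx_0)}\frac{G'(|\nabla u_i^*|)}{|\nabla u_i^*|}\,\nabla u_i^*\cdot(\nabla u_i-\nabla u_i^*)\,dx.
\]
Since $u_i^*$ is $g$-harmonic in $B_{\mathcal{S}}(kx_0)$ and $u_i - u_i^* \in W_0^{1,G}(B_{\mathcal{S}}(kx_0))$ (same trace on the boundary), the weak formulation of \eqref{G-harmonic} makes the last integral vanish. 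Summing over $i=1,\dots,m$, we are left with
\[
\mathrm{C}\int_{B_{\mathcal{S}}(kx_0)}\sum_{i=1}^m\bigl|\mathbf{V}_G(\nabla u_i)-\mathbf{V}_G(\nabla u_i^*)\bigr|^2\,dx \le \int_{B_{\mathcal{S}}(kx_0)}\sum_{i=1}^m G(|\nabla u_i|)\,dx - \int_{B_{\mathcal{S}}(kx_0)}\sum_{i=1}^m G(|\nabla u_i^*|)\,dx.
\]

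Next I would bound the right-hand side using that ${\bf u}$ is an almost-minimizer and ${\bf u}^* = (u_1^*,\dots,u_m^*)$ is an admissible competitor on $B_{\mathcal{S}}(kx_0)$ (it agrees with ${\bf u}$ on the boundary). From \eqref{VarIneqAlmMin},
\[
\int_{B_{\mathcal{S}}(kx_0)}\Bigl(\sum_{i=1}^m G(|\nabla u_i|)+\lambda\chi_{\{|{\bf u}|>0\}}\Bigr)dx \le (1+\kappa s^\beta)\int_{B_{\mathcal{S}}(kx_0)}\Bigl(\sum_{i=1}^m G(|\nabla u_i^*|)+\lambda\chi_{\{|{\bf u}^*|>0\}}\Bigr)dx,
\]
so, discarding the nonnegative term $\lambda\chi_{\{|{\bf u}|>0\}}$ on the left and bounding $\chi_{\{|{\bf u}^*|>0\}}\le 1$ on the right,
\[
\sum_{i=1}^m\int_{B_{\mathcal{S}}(kx_0)} G(|\nabla u_i|)\,dx - \sum_{i=1}^m\int_{B_{\mathcal{S}}(kx_0)} G(|\nabla u_i^*|)\,dx \le \kappa s^\beta \sum_{i=1}^m\int_{B_{\mathcal{S}}(kx_0)} G(|\nabla u_i^*|)\,dx + \lambda|B_{\mathcal{S}}(kx_0)|.
\]
Since $|B_{\mathcal{S}}(kx_0)| = |B_1| s^n$, the additive error term is already of the form $\mathrm{C}\lambda s^n$. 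It remains to replace $\sum_i\int G(|\nabla u_i^*|)$ by $\sum_i\int G(|\nabla u_i|)$ in the factor multiplying $\kappa s^\beta$; this follows from the minimality of the $g$-harmonic replacement itself—$u_i^*$ minimizes $v\mapsto \int_{B_{\mathcal{S}}(kx_0)} G(|\nabla v|)\,dx$ over the affine class $u_i + W_0^{1,G}(B_{\mathcal{S}}(kx_0))$—hence $\int G(|\nabla u_i^*|)\le \int G(|\nabla u_i|)$ for each $i$. Combining the two chains of inequalities yields exactly the asserted estimate, with a constant depending only on the $\mathrm{C}$ from \eqref{SSMI} (which depends on $n,\delta,g_0$).

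The main obstacle I anticipate is making the vanishing of the cross term rigorous: one needs $u_i - u_i^* \in W_0^{1,G}(B_{\mathcal{S}}(kx_0))$ to be a legitimate test function in the weak Euler–Lagrange equation for the $g$-harmonic replacement, which requires knowing that the nonlinear operator $\xi\mapsto g(|\xi|)\xi/|\xi|$ maps $L_G$ gradients into the dual Orlicz space $L_{\widetilde G}$ and that the pairing is well-defined; this is where the $\Delta_2\cap\nabla_2$ structure guaranteed by $G\in\mathcal{G}(\delta,g_0)$ (via Statement \ref{Statement}) is used, together with Young's inequality in Orlicz form. A secondary point worth a careful line is ensuring that the $g$-harmonic replacement $u_i^*$ exists and is unique in $W^{1,G}$—this is standard from convexity and the direct method once $G\in\Delta_2\cap\nabla_2$—and that \eqref{SSMI} is applied pointwise a.e.\ with the correct orientation of $z_1,z_2$ so that the sign of the cross term matches the one killed by $g$-harmonicity.
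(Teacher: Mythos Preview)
Your proposal is correct and follows essentially the same argument as the paper's proof: apply \eqref{SSMI} with $z_1=\nabla u_i^*$, $z_2=\nabla u_i$, use the weak $g$-harmonicity of $u_i^*$ tested against $u_i-u_i^*$ to eliminate the cross term, then invoke the almost-minimality inequality with competitor $\mathbf{u}^*=(u_1^*,\dots,u_m^*)$ and the energy-minimality of each $u_i^*$ to pass from $\int G(|\nabla u_i^*|)$ back to $\int G(|\nabla u_i|)$. The technical caveats you flag (legitimacy of $u_i-u_i^*$ as a test function, existence/uniqueness of the replacement) are standard under the $\Delta_2\cap\nabla_2$ structure implied by $G\in\mathcal{G}(\delta,g_0)$ and do not require additional argument.
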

\begin{proof}
    Initially, observe that $\displaystyle \int_{B_\mathcal{S}(kx_0)} \frac{G^{\prime}\big(|\nabla u_i^*|\big)}{|\nabla u_i^*|} \nabla u_i^* \cdot \nabla \phi\ dx = 0$ for any function $\phi \in W_0^{1,G}(B_\mathcal{S}(kx_0))$, and that $u_i^*$ coincides with $u_i$ on $\partial B_\mathcal{S}(kx_0)$ in the trace sense. Thus, taking $\phi = u_i - u_i^*$ and invoking \eqref{SSMI}, it follows that
        \begin{eqnarray}\label{part1}
            \int_{B_\mathcal{S}(kx_0)} \sum_{i=1}^m \big|\mathbf{V}_G(\nabla u_i) - \mathbf{V}_G(\nabla u_i^*)\big|^2 \ dx &=& \int_{B_\mathcal{S}(kx_0)} \sum_{i=1}^m \big|\mathbf{V}_G(\nabla u_i) - \mathbf{V}_G(\nabla u_i^*)\big|^2 \ dx \nonumber \\
            && + \int_{B_\mathcal{S}(kx_0)} \sum_{i=1}^m\dfrac{G^{\prime}(|\nabla u_i^*|)}{|\nabla u_i^*|} \nabla u_i^* \cdot \nabla (u_i - u_i^*) \ dx \nonumber \\
            &\le& C \int_{B_\mathcal{S}(kx_0)} \sum_{i=1}^m \Big(G(|\nabla u_i|) - G(|\nabla u_i^*|)\Big) \ dx.
        \end{eqnarray}
    Now, set ${\bf v} = (u_1^*, \dots, u_m^*)$ and use it in the definition of almost minimizer of ${\bf u}$ to get
        \begin{eqnarray}\label{part2}
            \int_{B_\mathcal{S}(kx_0)} \sum_{i=1}^m \Big( G(|\nabla u_i|) - G(|\nabla u_i^*|) \Big) \ dx &\le& \kappa s^\beta \int_{B_\mathcal{S}(kx_0)} \sum_{i=1}^m G(|\nabla u_i^*|) \ dx + \mathrm{C}\lambda s^n \nonumber \\
            &\le& \kappa s^\beta \int_{B_\mathcal{S}(kx_0)} \sum_{i=1}^m G(|\nabla u_i|) \ dx + \mathrm{C}\lambda s^n,
        \end{eqnarray}
    where in the last inequality we use the minimality of $u_i^*$ in the functional $\displaystyle \mathcal{I}(v) = \int G(|\nabla v|)dx$. Finally, the result follows by combining \eqref{part1} and \eqref{part2}.
\end{proof}

\section{H\"{o}lder regularity estimates}\label{Sec3}

This section is devoted to proving the initial regularity result for almost-minimizers of the functional $\mathcal{J}_G$. More precisely, we show that such functions exhibit local $C^{0,\alpha}$-regularity for every $\alpha \in (0,\delta/g_0]$. Additionally, we establish that the gradient of such functions is (locally)  H\"{o}lder continuous away from the free boundary.

%\begin{theorem}\label{Holdereg}
%    Let $G$ be an $\mathcal{N}$-function satisfying \eqref{Ga}. Consider ${\bf u}=(u_1,\dots, u_m)$ a $(\kappa,\beta)$-almost-minimizer of $\mathcal{J}_G$ in $\Omega$, with some positive constant $\kappa \le \kappa_0$ and exponent $\beta>0$. Then, ${\bf u} \in C_{loc}^{0,\alpha}(\Omega; \mathbb{R}^m)$, for any $0<\alpha \le \frac{\delta}{g_0}$. More precisely, for any $\Omega' \Subset \Omega$, there exists a constant $C=C(n,m,\beta,\kappa_0,\delta,g_0)>0$ such that
%       $$\| {\bf u}\|_{C^{0,\alpha}(\Omega';\mathbb{R}^m)} \le CG^{-1}\left( \sum_{i=1}^m \xi_1\Big(|\nabla u_i|_{L_G(\Omega)}\Big)+\lambda\right).$$
%\end{theorem}
\begin{proof}[Proof of Theorem \ref{Holdereg}]
    In order to use the Lemma \ref{lemreg}, let us estimate the integral $\displaystyle \intav{B_r(x_0)} G(|\nabla {\bf u}|) \ dx$, where $B_r(x_0) \Subset \Omega$, with $r \le 1$. To do so, as before, for each $i \in \{1,\dots,m\}$ set $u_i^*$ the $g$-harmonic replacement of $u_i$ in $B_r(x_0)$. 
    Moreover, for any $u_i^*$ we have the following estimate (see, for instance, \cite[Lemma 5.8]{DSV})
        \begin{equation}\label{31}
            \int_{B_{s_1}(x_0)} G(|\nabla u_i^*|) \ dx \le \mathrm{C} \left(\dfrac{s_1}{s_2}\right)^n\int_{B_{s_2}(x_0)}G(|\nabla u_i^*|) \ dx,
        \end{equation}
    for a universal $\mathrm{C}>0$, and any $0<s_1 < s_2 \le 1$. 
    
    Now, set $\tau \in (0,1)$, and by definition of $\mathbf{V}_G$ in \eqref{defVg} and $(G_3)$ we have
        \begin{eqnarray*}
            \int_{B_{\tau r}(x_0)} \sum_{i=1}^m \big|\mathbf{V}_G(\nabla u_i) - \mathbf{V}_G(\nabla u_i^*)\big|^2 dx &\ge& \int_{B_{\tau r}(x_0)} \sum_{i=1}^m \Big(G^{\prime}(|\nabla u_i|)|\nabla u_i| - G^{\prime}(|\nabla u_i^*|)|\nabla u_i^*|\Big) dx \\ 
            &\ge& (\delta+1) \int_{B_{\tau r}(x_0)} \sum_{i=1}^m G(|\nabla u_i|)dx  \\
            &&- (g_0+1) \int_{B_{\tau r}(x_0)} \sum_{i=1}^m G(|\nabla u_i^*|)dx,
        \end{eqnarray*}
    which implies
        \begin{eqnarray*}
            \int_{B_{\tau r}(x_0)} \sum_{i=1}^m G(|\nabla u_i|)dx &\le& \dfrac{g_0+1}{\delta+1} \left(\int_{B_{\tau r}(x_0)} \sum_{i=1}^m \big|\mathbf{V}_G(\nabla u_i) - \mathbf{V}_G(\nabla u_i^*)\big|^2 dx \right. \\
            && \left. \hspace{3.5cm}+ \int_{B_{\tau r}(x_0)} \sum_{i=1}^m G(|\nabla u_i^*|)dx\right).
        \end{eqnarray*}
    Thus, Lemma \ref{controlVg}, \eqref{31}, and the minimality of $u_i^*$ yield
        \begin{eqnarray}\label{32}
            \int_{B_{\tau r}(x_0)} \sum_{i=1}^m G(|\nabla u_i|) dx &\le&  \mathrm{C}\kappa r^\beta \int_{B_r(x_0)} \sum_{i=1}^m G(|\nabla u_i|) dx + \mathrm{C}\lambda r^n + \mathrm{C} \tau^n \int_{B_r(x_0)}\sum_{i=1}^mG(|\nabla u_i^*|)dx \nonumber \\
            &\le& \mathrm{C}\big(\kappa r^\beta + \tau^n\big)\int_{B_r(x_0)}\sum_{i=1}^mG(|\nabla u_i|)dx + \mathrm{C} \lambda r^n \nonumber \\
            &\le& \tau^{n+\alpha-1} \Big(r^\beta c_*\kappa_0\tau^{1-\alpha-n} + c_*\tau^{1-\alpha}\Big)\int_{B_r(x_0)}\sum_{i=1}^mG(|\nabla u_i|)dx + \mathrm{C} \lambda r^n,
        \end{eqnarray}
    for any $\alpha\in(0,1)$, and some $c_*>1$ (without loss of generality). Now, if we get $\tau\in(0,1)$ such that $c_*\tau^{1-\alpha} \le \frac{1}{2}$, and $0<r \le R_0 \le 1$ such that $R_0^\beta c_*\kappa_0 \tau^{1-\alpha-n} \le \frac{1}{2}$, \eqref{32} becomes
        $$
        \int_{B_{\tau r}(x_0)} \sum_{i=1}^m G(|\nabla u_i|) dx \le \tau^{n+\alpha-1} \int_{B_r(x_0)}\sum_{i=1}^mG(|\nabla u_i|)dx + \mathrm{C} \lambda r^n.$$
    Notice that, by the previous inequality, changing $\tau>0$ if necessary
        \begin{eqnarray*}
            \int_{B_{\tau^2 r}(x_0)} \sum_{i=1}^m G(|\nabla u_i|) dx &\le& \tau^{n+\alpha-1} \int_{B_{\tau r}(x_0)}\sum_{i=1}^mG(|\nabla u_i|)dx + \mathrm{C} \lambda (\tau r)^n \\
            &\le& \tau^{n+\alpha-1} \left(\tau^{n+\alpha-1} \int_{B_r(x_0)}\sum_{i=1}^mG(|\nabla u_i|)dx + \mathrm{C} \lambda r^n\right) + \mathrm{C}\lambda r^n\tau^n \\
            &=& \tau^{2(n+\alpha-1)} \int_{B_r(x_0)}\sum_{i=1}^mG(|\nabla u_i|)dx + \mathrm{C} \lambda r^n \big(\tau^{n+\alpha-1}+\tau^n\big).
        \end{eqnarray*}
    Inductively, for any $k \in \mathbb{N}$, we get
        \begin{equation}\label{path}
            \int_{B_{\tau^k r}(x_0)} \sum_{i=1}^m G(|\nabla u_i|) dx \le \tau^{k(n+\alpha-1)} \int_{B_r(x_0)}\sum_{i=1}^mG(|\nabla u_i|)dx + \mathrm{C}\lambda r^n \dfrac{\tau^{k(n+\alpha-1)}-\tau^{kn}}{\tau^{n+\alpha-1}-\tau^n}.
        \end{equation}
    Next, by the choices of $\tau$ and $R_0$ we get
        $$\tau^{n+\alpha-1} - \tau^n > 2R_0^\beta c_*\kappa_0 - \left(\dfrac{1}{2c_*}\right)^{\frac{n}{1-\alpha}} = \mathrm{C}(R_0,\beta,n,\alpha)\left(c_*-\left(\dfrac{1}{c_*}\right)^{\frac{n}{1-\alpha}}\right)>0,$$
    since $c_*>1$. Thus, we may estimate \eqref{path} by 
        \begin{equation}\label{33}
            \int_{B_{\tau^k r}(x_0)} \sum_{i=1}^m G(|\nabla u_i|) dx \le \tau^{k(n+\alpha-1)} \int_{B_r(x_0)}\sum_{i=1}^mG(|\nabla u_i|)dx + \mathrm{C}\lambda \big(\tau^k r\big)^{n+\alpha-1}.
        \end{equation}
    Now, consider a natural number $k$ such that $\tau^{k+1}r \le s \le \tau^kr$. Then, \eqref{33} leads us to
        \begin{eqnarray}\label{34}
            \int_{B_s(x_0)} \sum_{i=1}^m G(|\nabla u_i|) \ dx &\le& \int_{B_{\tau^k r}(x_0)} \sum_{i=1}^m G(|\nabla u_i|)\ dx \nonumber \\
            &\le& \tau^{1-\alpha-n}\tau^{(k+1)(n+\alpha-1)} \int_{B_r(x_0)}\sum_{i=1}^mG(|\nabla u_i|)dx \nonumber \\
            && \hspace{2cm}+ \mathrm{C}\lambda\tau^{1-\alpha-n} \big(\tau^{k+1} r\big)^{n+\alpha-1} \nonumber \\
            &\le& \mathrm{C} \left(\dfrac{s}{r}\right)^{n+\alpha-1}\int_{B_r(x_0)}\sum_{i=1}^mG(|\nabla u_i|)dx + \mathrm{C}\lambda s^{n+\alpha-1},
        \end{eqnarray}
    for any $0<s<r\le R_0 \le 1$, with $R_0^\beta c_*\kappa_0 \tau^{1-\alpha-n} \le \frac{1}{2}$. Now, we need to prove a similar inequality, but without this restriction, which means for any $0<s<r\le 1$. Suppose first $0<s\le R_0 <r \le 1$. In this case, by \eqref{34}
        \begin{eqnarray*}
            \int_{B_s(x_0)} \sum_{i=1}^m G(|\nabla u_i|) \ dx &\le& \mathrm{C} \left(\dfrac{s}{R_0}\right)^{n+\alpha-1}\int_{B_{R_0}(x_0)}\sum_{i=1}^mG(|\nabla u_i|)dx + \mathrm{C}\lambda s^{n+\alpha-1} \\
            &\le& \mathrm{C} \left(\dfrac{s}{r}\right)^{n+\alpha-1} \left(\dfrac{r}{R_0}\right)^{n+\alpha-1}\int_{B_r(x_0)}\sum_{i=1}^mG(|\nabla u_i|)dx + \mathrm{C}\lambda s^{n+\alpha-1} \\
            &\le& \dfrac{\mathrm{C}}{R_0^{n+\alpha-1}} \left(\dfrac{s}{r}\right)^{n+\alpha-1}\int_{B_r(x_0)}\sum_{i=1}^mG(|\nabla u_i|)dx + \mathrm{C}\lambda s^{n+\alpha-1}
        \end{eqnarray*}
    While, if $0<R_0<s <r\le1$ we have
        \begin{eqnarray*}
            \int_{B_s(x_0)} \sum_{i=1}^m G(|\nabla u_i|) \ dx &\le& \left(\dfrac{s}{R_0}\right)^{n+\alpha-1}\int_{B_s(x_0)} \sum_{i=1}^m G(|\nabla u_i|) \ dx + \mathrm{C}\lambda s^{n+\alpha-1} \\
            &\le& \dfrac{1}{R_0^{n+\alpha-1}} \left(\dfrac{s}{r}\right)^{n+\alpha-1}\int_{B_r(x_0)} \sum_{i=1}^m G(|\nabla u_i|) \ dx + \mathrm{C}\lambda s^{n+\alpha-1}.
        \end{eqnarray*}
    In either case, inequality \eqref{34} is thereby established for all radii satisfying $0<s<r\le1$, as required. As a consequence, applying a standard covering argument and $(G_1)$, we obtain that for any $\alpha \in (0,1)$
        \begin{eqnarray}\label{hf}
            \intav{B_r(x_0)} \sum_{i=1}^m G(|\nabla u_i|) \ dx &\le& \mathrm{C} \left(\int_{\Omega} \sum_{i=1}^mG\big(|\nabla u_i|\big) \ dx + \lambda\right) r^{\alpha-1} \nonumber \\
            &\le& \mathrm{C} \left( \sum_{i=1}^m \xi_1\Big(|\nabla u_i|_{L_G(\Omega)}\Big) + \lambda \right)r^{\alpha-1},
        \end{eqnarray}
    from which combined with \eqref{equivprop} we deduce
        $$\intav{B_r(x_0)} G(|\nabla {\bf u}|)\ dx \le \mathrm{C} \left( \sum_{i=1}^m \xi_1\Big(|\nabla u_i|_{L_G(\Omega)}\Big) + \lambda \right)r^{\alpha-1},$$
    and the conclusion follows by choosing $\alpha \in (0,\delta/g_0]$ and invoking Lemma \ref{lemreg}.
\end{proof}

\subsection{\texorpdfstring{$C^{1, \alpha}$}{C1,α} estimates far away from the free boundary}

In this subsection, we obtain H\"{o}lder gradient estimates for $(\kappa,\beta)$-almost-minimizers of \eqref{DefFunctional} far away from the free boundary. 

For the next result, we recall that, given ${\bf u} = (u_1,\dots,u_m)$, we define $\displaystyle P_{\bf u} \coloneqq \bigcup_{i=1}^m \{u_i>0\}$.

\begin{proof}[Proof of Theorem \ref{gradHold}]
    The main idea is to prove that $\mathbf{V}_G(\nabla u_i)$ is locally H\"{o}lder continuous in $P_{\bf u}$, where $\mathbf{V}_G$ is defined in \eqref{defVg}. Then, we apply Lemma \ref{L22} $(a)$.

    First, consider $B_r(x_0) \Subset P_{\bf u}$, and similar to the previous proof, for any $i \in \{1,\dots,m\}$, define $u_i^*$ as the $g$-harmonic replacement of $u_i$ in $B_r(x_0)$. Thus, by Lemma \ref{controlVg}
        \begin{equation}\label{36}
            \intav{B_r(x_0)} \sum_{i=1}^m \big|\mathbf{V}_G(\nabla u_i) - \mathbf{V}_G(\nabla u_i^*)\big|^2 \ dx \le \mathrm{C}\kappa r^\beta\left( \intav{B_r(x_0)} \sum_{i=1}^m G(|\nabla u_i|) \ dx + \mathrm{C}\lambda r^n\right).
        \end{equation}
    Mimicking the proof of \eqref{hf} in Theorem \ref{Holdereg}, we get
        \begin{equation}\label{sumilux}
            \intav{B_r(x_0)} \sum_{i=1}^m G(|\nabla u_i|) \ dx \le \mathrm{C} \left(\sum_{i=1}^m \int_\Omega G \big(|\nabla u_i|\big)\ dx + \lambda\right) r^{\alpha-1},
        \end{equation}
    where $C=C(\delta,g_0,n,\kappa_0,\beta)>0$, and $\alpha\in(0,1)$, which plugged in \eqref{36} gives us
        \begin{equation}\label{I}
            \intav{B_r(x_0)} \sum_{i=1}^m \big|\mathbf{V}_G(\nabla u_i) - \mathbf{V}_G(\nabla u_i^*)\big|^2 \le \mathrm{C}\kappa r^\beta\left(\sum_{i=1}^m \int_\Omega G \big(|\nabla u_i|\big)\ dx + \lambda\right) r^{\alpha-1}.
        \end{equation}
    Next, Lemma \ref{L22} $(b)$, \eqref{211} with $L=0$, and $(G_3)$ lead us to
        \begin{eqnarray}\label{II}
            \intav{B_s(x_0)}\sum_{i=1}^m\big|\mathbf{V}_G(\nabla u_i^*) - \big(\mathbf{V}_G(\nabla u_i^*)\big)_{x_0,s}\big|^2 dx &\le& \mathrm{C} \left(\dfrac{s}{r}\right)^\sigma \intav{B_r(x_0)}\sum_{i=1}^m G(|\nabla u_i^*|) \ dx \nonumber \\
            &\stackrel{\eqref{sumilux}}{\le}& \mathrm{C} \left(\dfrac{s}{r}\right)^\sigma \left(\sum_{i=1}^m \int_\Omega G \big(|\nabla u_i|\big)\ dx + \lambda\right) r^{\alpha-1}, 
        \end{eqnarray}
    for any $0<s< r$, and some $\sigma>0$, we also used the minimality of $u_i^*$ in the last inequality. Finally, by \eqref{212} and \eqref{I} we obtain
        \begin{eqnarray}\label{III}
            \intav{B_s(x_0)}\sum_{i=1}^m\big|\big(\mathbf{V}_G(\nabla u_i^*)\big)_{x_0,s} - \big(\mathbf{V}_G(\nabla u_i)\big)_{x_0,s}\big|^2 dx &\le& \mathrm{C} \intav{B_s(x_0)}\sum_{i=1}^m\big|\mathbf{V}_G(\nabla u_i^*) - \mathbf{V}_G(\nabla u_i)\big|^2 dx \nonumber \\
            &\le& \mathrm{C}\kappa r^\beta \left(\sum_{i=1}^m \int_\Omega G \big(|\nabla u_i|\big)\ dx + \lambda\right) r^{\alpha-1}.
        \end{eqnarray}    
    
    Thus, by combining \eqref{I}-\eqref{III}, we get
        \begin{eqnarray}\label{campanatoVG}
            \intav{B_s(x_0)}\sum_{i=1}^m\big|\mathbf{V}_G(\nabla u_i) - \big(\mathbf{V}_G(\nabla u_i)\big)_{x_0,s}\big|^2 dx &\le& \intav{B_s(x_0)}\sum_{i=1}^m\big|\mathbf{V}_G(\nabla u_i) - \mathbf{V}_G(\nabla u_i^*)\big|^2 dx \nonumber \\
            &&+ \intav{B_s(x_0)}\sum_{i=1}^m\big|\mathbf{V}_G(\nabla u_i^*) - \big(\mathbf{V}_G(\nabla u_i^*)\big)_{x_0,s}\big|^2 dx \nonumber \\
            &&+ \intav{B_s(x_0)}\sum_{i=1}^m\big|\big(\mathbf{V}_G(\nabla u_i^*)\big)_{x_0,s} - \big(\mathbf{V}_G(\nabla u_i)\big)_{x_0,s}\big|^2 dx \nonumber \\
            &\le& \mathrm{C} \left(\sum_{i=1}^m \int_\Omega G \big(|\nabla u_i|\big)\ dx + \lambda\right) \left( \left(\dfrac{r}{s}\right)^n r^{\beta+\alpha-1} + \left(\dfrac{s}{r}\right)^\sigma r^{\alpha-1}\right). \nonumber\\
            &\le& \mathrm{C} \left(\sum_{i=1}^m \int_\Omega G \big(|\nabla u_i|\big)\ dx + \lambda\right) s^{\gamma},
        \end{eqnarray}
    where $\gamma = \frac{1-\alpha}{1+\theta} \in (0,1)$. To derive the last inequality, set $s=r^{1+\theta}$ with $\theta \coloneqq \frac{\beta}{\sigma+n}>0$, and observe that              \begin{eqnarray}\label{alphatogamma}
            \left(\dfrac{r}{s}\right)^n r^{\beta+\alpha-1} + \left(\dfrac{s}{r}\right)^\sigma r^{\alpha-1} &=& s^{\frac{-\theta n + \beta+\alpha-1}{1+\theta}} + s^{\frac{\theta\sigma + \alpha-1}{1+\theta}} \nonumber\\
            &=& 2s^{\frac{\theta\sigma+\alpha-1}{1+\theta}}
        \end{eqnarray}
    where we used the identity $-\theta n +\beta = \theta\sigma$. Now, assume that $\theta\sigma<2$. Since, by hypothesis, $g_0 = \delta + \varepsilon$ for some small $\varepsilon>0$ choose
        $$0<\varepsilon < (2-\theta\sigma)\delta\theta\sigma,$$
    which implies that
        $$\dfrac{\delta}{g_0} = \dfrac{\delta}{\delta+\varepsilon}>1-\dfrac{\theta\sigma}{2}.$$
    Hence, we may choose $\alpha$ such that $1-\frac{\theta\sigma}{2}<\alpha<\frac{\delta}{g_0}$, and thus obtain
        $$\dfrac{1-\alpha}{1+\theta}<\dfrac{\theta\sigma+\alpha-1}{1+\theta}.$$
    In the case where $\theta\sigma \ge 2$, the conclusion still holds without additional assumptions. Since $s \in (0,1)$, this inequality, combined with \eqref{alphatogamma}, yields the desired estimate \eqref{campanatoVG}.

    Therefore, following a similar argument as in the proof of Lemma \ref{lemreg}, and applying the classical Campanato space embedding theorem (see, e.g., \cite[Chapter III]{G}), we conclude that $\mathbf{V}_G(\nabla u_i)$ is locally $C^{0,\gamma}$ regular in $P_{\bf u}$.  The result then follows directly from Lemma \ref{L22} $(a)$.
\end{proof}

\begin{remark}\label{obs1}
    At the end of the previous proof, it becomes clear that the structural assumption $g_0 = \delta + \varepsilon$ is no longer necessary when the condition $\theta \sigma \ge 2$ is satisfied. This observation allows us to relax the assumptions on the ellipticity constants, provided that the exponent $\beta$ appearing in the definition of a $(\kappa, \beta)$-almost-minimizer is greater than the threshold $2\left(\dfrac{n}{\sigma} + 1\right)$, with $\sigma \in (0,1)$ given in the Lemma \ref{L22} $(b)$, which is equivalent to $\theta\sigma \ge 2$, by the definition of $\theta$. The next statement makes this refinement precise.
\end{remark}

\begin{corollary}\label{corgradhold}
    Suppose $G\in \mathcal{G}(\delta,g_0)$, and let ${\bf u} = (u_1, \dots, u_m)$ be a $(\kappa,\beta)$-almost-minimizer of $\mathcal{J}_G$ in $\Omega$, with some positive constant $\kappa \le \kappa_0$ and exponent $\beta \ge 2\left(\dfrac{n}{\sigma} + 1\right)$, where $0<\sigma<1$ is given in Lemma \ref{L22} $(b)$. Then ${\bf u}$ exhibits local $C^{1,\gamma}$-regularity in $P_{\bf u}$. More precisely, for any open set $\Omega^{\prime} \Subset P_{\bf u}$, there exists an exponent $\gamma = \gamma(\delta, g_0, n, \beta) > 0$ and a constant $\mathrm{C} = \mathrm{C}(\Omega^{\prime}, \delta, g_0, n, \kappa_0, \beta) > 0$ such that
        $$\|{\bf u}\|_{C^{1,\gamma}(\Omega^{\prime};\mathbb{R}^m)} \le \mathrm{C}\Big(\vert\nabla {\bf u}\vert_{L^G(\Omega;\mathbb{R}^m)} + \lambda\Big).$$
\end{corollary}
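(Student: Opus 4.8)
The plan is to re-run the proof of Theorem~\ref{gradHold} essentially verbatim; the key observation is that the hypothesis $\beta \ge 2\big(\tfrac{n}{\sigma}+1\big)$ is nothing but a reformulation of the condition $\theta\sigma \ge 2$, where $\theta \coloneqq \tfrac{\beta}{\sigma+n}$ as in that proof, and --- as anticipated in Remark~\ref{obs1} --- this is exactly the regime in which the structural restriction $g_0 = \delta+\varepsilon$ becomes superfluous. Thus I would fix a ball $B_r(x_0) \Subset P_{\bf u}$ and, for each $i \in \{1,\dots,m\}$, let $u_i^*$ be the $g$-harmonic replacement of $u_i$ in $B_r(x_0)$. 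Combining Lemma~\ref{controlVg} with the energy decay \eqref{sumilux} --- which, crucially, is valid for \emph{every} $\alpha\in(0,1)$, since it only relies on \eqref{hf} --- controls $\intav{B_r(x_0)}\sum_{i=1}^m\big|\mathbf{V}_G(\nabla u_i)-\mathbf{V}_G(\nabla u_i^*)\big|^2\,dx$; feeding this into the oscillation decay for $g$-harmonic functions from Lemma~\ref{L22}$(b)$ (applied to each $u_i^*$), together with \eqref{211}--\eqref{212} and the triangle inequality, exactly along the chain of estimates culminating in \eqref{campanatoVG}, I would arrive at
\[
\intav{B_s(x_0)}\sum_{i=1}^m\big|\mathbf{V}_G(\nabla u_i)-\big(\mathbf{V}_G(\nabla u_i)\big)_{x_0,s}\big|^2\,dx \le \mathrm{C}\Big(\sum_{i=1}^m\int_\Omega G(|\nabla u_i|)\,dx+\lambda\Big)\Big(\big(\tfrac{r}{s}\big)^n r^{\beta+\alpha-1}+\big(\tfrac{s}{r}\big)^\sigma r^{\alpha-1}\Big)
\]
for all $0<s<r$ and any $\alpha\in(0,1)$.

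The next step is the decisive one. Specializing $s=r^{1+\theta}$ and using the identity $-\theta n+\beta=\theta\sigma$, the last bracket equals $2\,s^{(\theta\sigma+\alpha-1)/(1+\theta)}$. Since $\theta\sigma\ge 2$ by hypothesis, for \emph{any} $\alpha\in(0,1)$ one has $2(1-\alpha)<2\le\theta\sigma$, hence $\tfrac{1-\alpha}{1+\theta}\le\tfrac{\theta\sigma+\alpha-1}{1+\theta}$; as $s\in(0,1)$, this upgrades the previous estimate to
\[
\intav{B_s(x_0)}\sum_{i=1}^m\big|\mathbf{V}_G(\nabla u_i)-\big(\mathbf{V}_G(\nabla u_i)\big)_{x_0,s}\big|^2\,dx \le \mathrm{C}\Big(\sum_{i=1}^m\int_\Omega G(|\nabla u_i|)\,dx+\lambda\Big)\,s^{\gamma},\qquad \gamma\coloneqq\tfrac{1-\alpha}{1+\theta}\in(0,1).
\]
Note that no lower bound on $\delta/g_0$ was used anywhere: the condition on $\beta$ has entirely taken over the role previously played by the assumption on the ellipticity ratio. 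For definiteness I would fix $\alpha\in(0,\delta/g_0)$, just as in the proof of Theorem~\ref{gradHold}, pinning down a concrete exponent $\gamma=\gamma(\delta,g_0,n,\beta)>0$ (the dependence on $\delta,g_0$ entering only through $\sigma$).

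To conclude, with this Campanato-type decay in hand for each component $\mathbf{V}_G(\nabla u_i)$, I would finish exactly as in Theorem~\ref{gradHold}: the classical Campanato embedding (cf.\ \cite[Chapter III]{G}) yields $\mathbf{V}_G(\nabla u_i)\in C^{0,\gamma}_{loc}(P_{\bf u})$, and the $\gamma$-Hölder continuity of $\mathbf{V}_G^{-1}$ from Lemma~\ref{L22}$(a)$ transfers this to $\nabla u_i$, so ${\bf u}\in C^{1,\gamma}_{loc}(P_{\bf u};\mathbb{R}^m)$; a standard covering argument over $\Omega^{\prime}\Subset P_{\bf u}$ then produces the stated quantitative bound, whose right-hand side may be displayed equivalently via the modular $\sum_i\int_\Omega G(|\nabla u_i|)\,dx$ or the Luxemburg norm $|\nabla{\bf u}|_{L_G(\Omega;\mathbb{R}^{nm})}$ by means of $(G_1)$ and \eqref{equivprop}. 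Honestly, there is no genuinely new difficulty beyond Theorem~\ref{gradHold}: the only points demanding care are the exponent bookkeeping --- verifying $\theta\sigma\ge 2\Longleftrightarrow\beta\ge 2(n/\sigma+1)$ and the inequality $\tfrac{1-\alpha}{1+\theta}\le\tfrac{\theta\sigma+\alpha-1}{1+\theta}$ for an admissible $\alpha$ --- together with the minor passage between the modular and Luxemburg-norm forms of the final estimate.
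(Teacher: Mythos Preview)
Your proposal is correct and follows precisely the approach intended by the paper: as Remark~\ref{obs1} indicates, the corollary is obtained by re-running the proof of Theorem~\ref{gradHold} verbatim and observing that the hypothesis $\beta \ge 2(n/\sigma+1)$ is exactly $\theta\sigma \ge 2$, in which case the inequality $\tfrac{1-\alpha}{1+\theta}\le\tfrac{\theta\sigma+\alpha-1}{1+\theta}$ holds for every $\alpha\in(0,1)$ without any restriction on $\delta/g_0$. Your exponent bookkeeping is accurate, and your closing remark on the passage between the modular and Luxemburg-norm forms of the right-hand side is the only additional point beyond the paper's own argument.
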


\section{Lipschitz regularity estimates}\label{Sec4}

In this section, we will prove that $(\kappa,\beta)$-almost-minimizers of the functional $\mathcal{J}_G$. To begin, let us prove a Caccioppoli-type inequality, which is one of the main steps towards our goal.

\begin{proposition}[{\bf Caccioppoli-type inequality}]\label{CaccioIne}
    Assume that $G\in \mathcal{G}(\delta,g_0)$, and let ${\bf u} = (u_1, \dots, u_m)$ be an $(\kappa,\beta)$-almost-minimizer of $\mathcal{J}_G$ in $\Omega$, with some positive constant $\kappa \le \kappa_0$ and exponent $\beta>0$. Then, for any concentric balls $B_s(x_0) \subset B_t(x_0) \Subset \Omega$, with $0<s<t<\infty$, and $\vartheta\in \mathbb{R}$, it holds
        $$
        \sum_{i=1}^m \int_{B_{s}(x_0)} G(|\nabla u_i|) \ dx \le \mathrm{C} \sum_{i=1}^m\left(\int_{B_t(x_0)} G\left(\dfrac{|u_i - \vartheta|}{t-s}\right) \ dx +\lambda |B_t(x_0)|\right), 
        $$
    for some universal constant $\mathrm{C} = \mathrm{C}(n,m,\kappa_0,\beta,\delta,g_0)>0$.
\end{proposition}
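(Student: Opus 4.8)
The plan is to prove the Caccioppoli-type inequality by a standard cutoff-and-comparison argument adapted to the almost-minimality hypothesis and the Orlicz setting. Fix the concentric balls $B_s(x_0) \subset B_t(x_0) \Subset \Omega$ and a constant $\vartheta \in \mathbb{R}$. Choose a Lipschitz cutoff function $\eta \in C_c^\infty(B_t(x_0))$ with $0 \le \eta \le 1$, $\eta \equiv 1$ on $B_s(x_0)$, and $|\nabla \eta| \le \frac{\mathrm{C}}{t-s}$. For each component define the competitor $v_i \coloneqq u_i - \eta(u_i - \vartheta) = (1-\eta)u_i + \eta\vartheta$, set $\mathbf{v} \coloneqq (v_1, \dots, v_m)$, and note that $\mathbf{v} = \mathbf{u}$ on $\partial B_t(x_0)$, so $\mathbf{v}$ is an admissible competitor in the almost-minimality inequality \eqref{VarIneqAlmMin} on the ball $B_t(x_0)$.

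First I would write out \eqref{VarIneqAlmMin} for this competitor: $\mathcal{J}_G(\mathbf{u}; B_t(x_0)) \le (1+\kappa t^\beta)\mathcal{J}_G(\mathbf{v}; B_t(x_0))$. Since $0 < t < \infty$ and we are in the regime of small radii (or, more carefully, absorbing $\kappa t^\beta \le \kappa_0 t^\beta$ into the universal constant over the relevant range), the gradient-energy part of the inequality reads, after dropping the $\chi$-term on the left and bounding $\chi_{\{|\mathbf{v}|>0\}} \le 1$ on the right,
\[
\sum_{i=1}^m \int_{B_t(x_0)} G(|\nabla u_i|)\,dx \le (1+\kappa_0 t^\beta)\sum_{i=1}^m \int_{B_t(x_0)} G(|\nabla v_i|)\,dx + \mathrm{C}\lambda|B_t(x_0)|.
\]
Next I compute $\nabla v_i = (1-\eta)\nabla u_i - (u_i - \vartheta)\nabla\eta$ and estimate $G(|\nabla v_i|)$ using convexity together with property $(G_2)$ from Statement \ref{Statement}: writing $|\nabla v_i| \le (1-\eta)|\nabla u_i| + |u_i-\vartheta||\nabla\eta|$ and applying $(G_2)$ (the $\Delta_2$-type subadditivity) gives $G(|\nabla v_i|) \le 2^{g_0}(g_0+1)\big[G((1-\eta)|\nabla u_i|) + G(|u_i-\vartheta||\nabla\eta|)\big] \le \mathrm{C}\big[(1-\eta)G(|\nabla u_i|) + G(\tfrac{\mathrm{C}|u_i-\vartheta|}{t-s})\big]$, where in the last step I used $G((1-\eta)s) \le (1-\eta)G(s)$ from \eqref{ine} (convexity, $1-\eta \in [0,1]$) and $G(|\nabla\eta|\,|u_i-\vartheta|) \le G(\tfrac{\mathrm{C}}{t-s}|u_i-\vartheta|)$ together with the $\Delta_2$-property $(g_1)$ to pull the constant out. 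Crucially $1-\eta \equiv 0$ on $B_s(x_0)$, so $\int_{B_t} (1-\eta)G(|\nabla u_i|)\,dx = \int_{B_t\setminus B_s}(1-\eta)G(|\nabla u_i|)\,dx \le \int_{B_t\setminus B_s}G(|\nabla u_i|)\,dx$.

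Substituting back, and writing $\mathrm{C}' = 2^{g_0}(g_0+1)(1+\kappa_0 t^\beta)$ times the structural constants, I obtain
\[
\sum_{i=1}^m\int_{B_t(x_0)} G(|\nabla u_i|)\,dx \le \mathrm{C}'\sum_{i=1}^m\int_{B_t(x_0)\setminus B_s(x_0)} G(|\nabla u_i|)\,dx + \mathrm{C}\sum_{i=1}^m\int_{B_t(x_0)} G\!\left(\tfrac{|u_i-\vartheta|}{t-s}\right)dx + \mathrm{C}\lambda|B_t(x_0)|.
\]
This is the classic "hole-filling" situation: adding $\mathrm{C}'\sum_i\int_{B_s} G(|\nabla u_i|)$ to both sides yields $(1)\sum_i\int_{B_t} G(|\nabla u_i|) \le \frac{\mathrm{C}'}{\mathrm{C}'+1}\sum_i\int_{B_t} G(|\nabla u_i|) + (\text{good terms})$, so since $\frac{\mathrm{C}'}{\mathrm{C}'+1} < 1$ I can absorb and conclude. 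The main obstacle I anticipate is the bookkeeping in the hole-filling step combined with the non-homogeneity of $G$: one must be careful that the constant $\mathrm{C}'$ stays independent of $s,t$ and of the scale (which is why one works with the $\Delta_2$/$\nabla_2$ estimates $(g_1)$, $(G_2)$ from Statement \ref{Statement} rather than with powers), and that the $\kappa t^\beta$ factor does not degrade the absorption — handled by noting $1+\kappa_0 t^\beta$ is bounded on the bounded domain $\Omega$, or by a Widman-type iteration on concentric balls if one wants the cleanest constant. A secondary technical point is justifying that $v_i \ge 0$ is not required here since Proposition \ref{CaccioIne} uses the almost-minimality \eqref{VarIneqAlmMin} directly (which allows arbitrary competitors agreeing on the boundary), not minimality over $\mathcal{K}$; and that the passage from $B_t$ to $B_s$ on the left-hand side at the end is immediate since $B_s \subset B_t$.
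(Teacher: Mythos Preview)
Your overall strategy matches the paper's proof: the same cutoff competitor $v_i=(1-\eta)u_i+\eta\vartheta$, the almost-minimality comparison, the convexity estimate $G(|\nabla v_i|)\le \mathrm{C}[(1-\eta)G(|\nabla u_i|)+G(|u_i-\vartheta|/(t-s))]$, and the hole-filling idea. However, the final absorption step as you describe it does not work, and the Widman iteration is not an optional refinement --- it is the whole point.

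After adding $\mathrm{C}'\sum_i\int_{B_s}G(|\nabla u_i|)$ to both sides of your displayed inequality you do \emph{not} obtain $\int_{B_t}\le \frac{\mathrm{C}'}{\mathrm{C}'+1}\int_{B_t}+(\text{good})$; the left side becomes $\int_{B_t}+\mathrm{C}'\int_{B_s}$ and the right becomes $\mathrm{C}'\int_{B_t}+(\text{good})$, which only yields
\[
\int_{B_s}\sum_i G(|\nabla u_i|)\;\le\;\gamma\int_{B_t}\sum_i G(|\nabla u_i|)\;+\;\int_{B_t}\sum_i G\!\left(\tfrac{|u_i-\vartheta|}{t-s}\right)dx+\lambda|B_t|,\qquad \gamma<1.
\]
The ``good'' term still carries $(t-s)^{-1}$ inside $G$, so you cannot simply absorb. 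This is exactly the paper's intermediate inequality \eqref{induc1}, and from here the paper runs the standard iteration on a telescoping sequence of radii $\rho_0=s<\rho_1<\cdots\to t$ with increments $(1-\tau)\tau^j(t-s)$, using the $\Delta_2$ bound $G(|u_i-\vartheta|/((1-\tau)\tau^j(t-s)))\le (g_0+1)\tau^{-j(g_0+1)}G(|u_i-\vartheta|/((1-\tau)(t-s)))$ and choosing $\tau$ close enough to $1$ that $\gamma\tau^{-(g_0+1)}<1$. Summing the geometric series eliminates the energy term on the right. Without this step (or an equivalent iteration lemma, e.g.\ Giaquinta--Giusti), your argument stops at $\phi(s)\le\gamma\phi(t)+h(s,t)$, which is not the claimed Caccioppoli inequality.
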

\begin{proof}
For simplicity of notation, for any $r>0$ let us consider $B_r(x_0)$ solely as $B_r$. Consider $\eta \in C_0^\infty(B_t)$, such that
    \begin{equation}\label{eta}
    \left\{
       \begin{array}{l}
           0\le \eta\le1 \\
           \eta \equiv 1 \ \mbox{in} \ B_s\\
           |\nabla \eta|\le \dfrac{2}{t-s}.
        \end{array}
    \right.\end{equation}
Define also $\varphi_i = \eta \big(u_i - \vartheta\big)$, which satisfies $\nabla \varphi_i =\nabla u_i$ in $B_s$. Then, for any $i \in \{1,\dots,m\}$
    \begin{eqnarray*}
        \int_{B_t} G(|\nabla \varphi_i|) \ dx &=& \int_{B_t} \Big( G(|\nabla u_i|) + \lambda\chi_{\{u_i>0\}}\Big)\ dx + \int_{B_t} \Big(G(|\nabla \varphi_i|) - G(|\nabla u_i|) -\lambda\chi_{\{u_i>0\}}\Big)\ dx \\
        &\le& \int_{B_t} \Big( G(|\nabla u_i|) + \lambda\chi_{\{u_i>0\}}\Big)\ dx + \int_{B_t\setminus B_s} \Big(G(|\nabla \varphi_i|) - G(|\nabla u_i|) -\lambda\chi_{\{u_i>0\}}\Big) \ dx \\
        &\le& \int_{B_t} \Big( G(|\nabla u_i|) + \lambda\chi_{\{u_i>0\}}\Big)\ dx + \int_{B_t\setminus B_s} \Big(G(|\nabla \varphi_i|) + G(|\nabla u_i|) +\lambda\Big) \ dx,
    \end{eqnarray*}
which leads us to
    \begin{eqnarray}\label{BsG}
        \int_{B_s}G(|\nabla u_i|) \ dx &=& \int_{B_t} G(|\nabla \varphi_i|) \ dx - \int_{B_t \setminus B_s} G(|\nabla \varphi_i|) \ dx \nonumber \\
        &\le& \int_{B_t} \Big( G(|\nabla u_i|) + \lambda\chi_{\{u_i>0\}}\Big)\ dx + \int_{B_t\setminus B_s} G(|\nabla u_i|) \ dx + \lambda|B_t|,
    \end{eqnarray}
for any $i \in \{1,\dots,m\}$. Now, let $v_i = u_i - \varphi_i = \vartheta + (1-\eta)(u_i-\vartheta)$, which clearly satisfies $v_i = \vartheta$ in $B_s$, and $v_i=u_i$ on $\partial B_t$. Thus, by the definition of $(\kappa,\beta)$-almost-minimizers, we have
    \begin{eqnarray*}
        \int_{B_t} \left(\sum_{i=1}^m G(|\nabla u_i|) + \lambda \chi_{\{|{\bf u}|>0\}}\right)\ dx &\le& \big(1+\kappa t^\beta\big) \int_{B_t} \left(\sum_{i=1}^m G(|\nabla v_i|) + \lambda \chi_{\{|{\bf v}|>0\}}\right)\ dx \\
        &\le& \big(1+\kappa t^\beta\big) \int_{B_t} \left(\sum_{i=1}^m G(|\nabla v_i|) + \lambda\right)\ dx.
    \end{eqnarray*}
Taking the sum from $1$ to $m$ in \eqref{BsG} and plugging the last inequality into it, we arrive at
    \begin{equation}\label{BsG2}
        \int_{B_s} \sum_{i=1}^m G(|\nabla u_i|) \ dx \le \int_{B_t\setminus B_s} \sum_{i=1}^m G(|\nabla u_i|) \ dx + \mathrm{C}\int_{B_t} \sum_{i=1}^m G(|\nabla v_i|) \ dx + \mathrm{C}\lambda|B_t|.
    \end{equation}
Next, by $(G_2)$ and \eqref{ine} we have
    \begin{eqnarray*}
        G(|\nabla v_i|) &=& G\big(|(1-\eta)\nabla u_i - (u_i - \vartheta)\nabla \eta|\big) \\
        &\le& (1-\eta) G(|\nabla u_i|) + \mathrm{C}G\left(\dfrac{|u_i - \vartheta|}{t-s}\right),
    \end{eqnarray*}
which combined with \eqref{BsG2} and \eqref{eta} yield
    \begin{eqnarray}\label{BsG3}
        \int_{B_s} \sum_{i=1}^m G(|\nabla u_i|) \ dx &\le& \int_{B_t\setminus B_s} \sum_{i=1}^m G(|\nabla u_i|) \ dx + \mathrm{C}\int_{B_t} \sum_{i=1}^m \left[(1-\eta) G(|\nabla u_i|) + G\left(\dfrac{|u_i - \vartheta|}{t-s}\right)\right] \ dx \nonumber \\
        &&\hspace{10cm}+ \mathrm{C}\lambda|B_t| \nonumber \\
        &=& \mathrm{C} \left( \int_{B_t\setminus B_s} \sum_{i=1}^m G(|\nabla u_i|) \ dx + \int_{B_t} \sum_{i=1}^mG\left(\dfrac{|u_i - \vartheta|}{t-s}\right) \ dx + \lambda |B_t|\right).
    \end{eqnarray}
    
We now use the ``hole-filling'' method; that is, we add to both sides the quantity
    $$
    \mathrm{C}\sum_{i=1}^m\int_{B_s} G(|\nabla u_i|)\ dx,$$
and divide by $C+1$. Then, we discover that
    \begin{equation}\label{induc1}
        \int_{B_s} \sum_{i=1}^m G(|\nabla u_i|) \ dx \le \gamma \int_{B_t} \sum_{i=1}^mG(|\nabla u_i|)\ dx + \int_{B_t} \sum_{i=1}^mG\left(\dfrac{|u_i - \vartheta|}{t-s}\right) \ dx + \lambda |B_t|,
    \end{equation}
where $\gamma \coloneqq \frac{\mathrm{C}}{\mathrm{C}+1}<1$, for any $0<s<t<\infty$ with $B_t\Subset\Omega$.

Now, fix $s<t$ and consider the sequence
    $$\rho_0 \coloneqq s \quad \mbox{and} \quad \rho_{j+1} = (1-\tau)\tau^j(t-s) + \rho_j, \quad j =0,1,2,\dots,$$
where $\tau \in (0,1)$ is to be chosen later. Notice that $\rho_j$ is a increasing sequence, and $\rho_j < t$ for any $j=0,1,2,\dots$. Then, applying \eqref{induc1}, we obtain
    \begin{eqnarray*}
        \int_{B_s} \sum_{i=1}^m G(|\nabla u_i|) \ dx &\le& \gamma \int_{B_{\rho_1}} \sum_{i=1}^m G(|\nabla u_i|) \ dx + \int_{B_{\rho_1}} \sum_{i=1}^mG\left(\dfrac{|u_i - \vartheta|}{(1-\tau)(t-s)}\right) \ dx + \lambda |B_t| \\
        &\le& \gamma \left(\gamma \int_{B_{\rho_2}} \sum_{i=1}^m G(|\nabla u_i|) \ dx + \int_{B_{\rho_2}} \sum_{i=1}^mG\left(\dfrac{|u_i - \vartheta|}{(1-\tau)\tau(t-s)}\right) \ dx + \lambda |B_t|\right) \\
        && \hspace{4cm}+ \int_{B_{\rho_1}} \sum_{i=1}^mG\left(\dfrac{|u_i - \vartheta|}{(1-\tau)(t-s)}\right) \ dx + \lambda |B_t| \\
        &\le& \gamma^2 \int_{B_{\rho_2}} \sum_{i=1}^m G(|\nabla u_i|) \ dx + \int_{B_t} \sum_{i=1}^mG\left(\dfrac{|u_i - \vartheta|}{(1-\tau)(t-s)}\right) \ dx \\
        && \hspace{3cm}+ \gamma\int_{B_t} \sum_{i=1}^mG\left(\dfrac{|u_i - \vartheta|}{(1-\tau)\tau(t-s)}\right) \ dx + \lambda|B_t|\Big(\gamma+1\Big)\\
        &\le&\gamma^2 \int_{B_{\rho_2}} \sum_{i=1}^m G(|\nabla u_i|) \ dx + \int_{B_t} \sum_{i=1}^mG\left(\dfrac{|u_i - \vartheta|}{(1-\tau)(t-s)}\right) \ dx \\
        && \hspace{2cm}+ \gamma(g_0+1)\tau^{-(g_0+1)}\int_{B_t} \sum_{i=1}^mG\left(\dfrac{|u_i - \vartheta|}{(1-\tau)(t-s)}\right) \ dx + \lambda|B_t|\Big(\gamma+1\Big)
    \end{eqnarray*}  
Inductively, we find
    \begin{eqnarray*}
         \int_{B_s} \sum_{i=1}^m G(|\nabla u_i|) \ dx &\le& \gamma^j\int_{B_{\rho_j}} \sum_{i=1}^m G(|\nabla u_i|) \ dx + (g_0+1) \sum_{k=0}^{j-1}(\gamma\tau^{-(g_0+1)})^k\int_{B_t} \sum_{i=1}^mG\left(\dfrac{|u_i - \vartheta|}{(1-\tau)(t-s)}\right) \ dx \\
        && \hspace{4cm}\lambda|B_t|\sum_{k=0}^{j-1}\gamma^k\\
        &\le& \gamma^j\int_{B_{\rho_j}} \sum_{i=1}^m G(|\nabla u_i|) \ dx + \dfrac{(g_0+1)^2}{(1-\tau)^{g_0+1}} \sum_{k=0}^{j-1}(\gamma\tau^{-(g_0+1)})^k\int_{B_t} \sum_{i=1}^mG\left(\dfrac{|u_i - \vartheta|}{t-s}\right) \ dx \\
        && \hspace{4cm}\lambda|B_t|\sum_{k=0}^{j-1}\gamma^k.
    \end{eqnarray*}
Finally, choosing $\tau=\tau(g_0) \in (0,1)$ in a such way that $\gamma\tau^{-(g_0+1)}<1$, and passing to the limit of $j\to \infty$ we get
    $$\int_{B_s} \sum_{i=1}^m G(|\nabla u_i|) \ dx \le \dfrac{(g_0+1)^2}{(1-\tau)^{g_0+1}(1-\gamma\tau^{-(g_0+1)})}\int_{B_t} \sum_{i=1}^mG\left(\dfrac{|u_i - \vartheta|}{t-s}\right) \ dx + \dfrac{1}{1-\gamma}\lambda|B_t|,$$
which proves the proposition.
\end{proof}

Next, we have the following.

\begin{corollary}\label{MainCorollary}
    Suppose $G\in \mathcal{G}(\delta,g_0)$, and let ${\bf u} = (u_1, \dots, u_m)$ be a $(\kappa,\beta)$-almost-minimizer of $\mathcal{J}_G$ in $\Omega$, with some positive constant $\kappa \le \kappa_0$ and exponent $0<\beta<1$. Furthermore, assume that $g_0=\delta+\varepsilon$, where $\varepsilon>0$ is given in Theorem \ref{gradHold}, and $B_1({\bf 0}) \Subset \{{\bf |u|}>0\}$. Then,
        $$
        |\nabla {\bf u}({\bf 0})|\le \mathrm{C}\cdot \Big( \|{\bf u}\|_{L^\infty(B_1({\bf0}),\mathbb{R}^m)} + \lambda\Big),
        $$
    for a universal constant $\mathrm{C}=\mathrm{C}(n,m,\kappa_0,\beta,\delta,g_0)>0$.
\end{corollary}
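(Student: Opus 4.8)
The plan is to combine the interior gradient bound for almost-minimizers away from the free boundary (Theorem~\ref{gradHold}) with the Caccioppoli-type inequality (Proposition~\ref{CaccioIne}), and then to linearize the resulting non-homogeneous estimate by means of the $\Delta_2\cap\nabla_2$ structure of $G$, i.e. the concavity and subadditivity of $G^{-1}$. First I would localize. Since $\overline{B_1(\mathbf{0})}\subset\{|\mathbf{u}|>0\}$, the ball $B_1(\mathbf{0})$ is compactly contained in $\Omega$ and $\mathbf{u}$ is continuous, hence bounded, on $\overline{B_1(\mathbf{0})}$ by Theorem~\ref{Holdereg}; moreover $\mathbf{u}|_{B_{3/4}(\mathbf{0})}$ is again a $(\kappa,\beta)$-almost-minimizer of $\mathcal{J}_G$ in $B_{3/4}(\mathbf{0})$ whose non-coincidence set is all of $B_{3/4}(\mathbf{0})$ (recall $u_i\ge0$, so $P_{\mathbf{u}}=\{|\mathbf{u}|>0\}$). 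Thus Theorem~\ref{gradHold} applies with $\Omega=B_{3/4}(\mathbf{0})$ and $\Omega'=B_{1/2}(\mathbf{0})\Subset B_{3/4}(\mathbf{0})$; using it in the sup-norm form $\|\nabla\mathbf{u}\|_{L^\infty(\Omega')}\le G^{-1}\big(\mathrm{C}(\sum_i\int_\Omega G(|\nabla u_i|)\,dx+\lambda)\big)$ furnished by its proof (the Campanato bound on $\mathbf{V}_G(\nabla u_i)$ combined with property $(G_3)$), we get that $\nabla\mathbf{u}(\mathbf{0})$ is well defined and
$$
|\nabla\mathbf{u}(\mathbf{0})|\ \le\ \mathrm{C}\,G^{-1}\!\left(\sum_{i=1}^{m}\int_{B_{3/4}(\mathbf{0})}G(|\nabla u_i|)\,dx+\lambda\right).
$$

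Next I would bound the gradient modular by the sup-norm of $\mathbf{u}$. Applying Proposition~\ref{CaccioIne} on the concentric balls $B_{3/4}(\mathbf{0})\subset B_1(\mathbf{0})\Subset\Omega$ with $\vartheta=0$ (legitimate since $u_i\ge0$; in general one takes $\vartheta=(u_i)_{B_1(\mathbf{0})}$, at the cost of a harmless extra factor), and using $t-s=\tfrac14$, $|u_i|\le|\mathbf{u}|\le\|\mathbf{u}\|_{L^\infty(B_1(\mathbf{0}))}$, the monotonicity of $G$ and property $(G_1)$ (the $\Delta_2$-bound $G(4\rho)\le(g_0+1)4^{g_0+1}G(\rho)$), it follows that
$$
\sum_{i=1}^{m}\int_{B_{3/4}(\mathbf{0})}G(|\nabla u_i|)\,dx\ \le\ \mathrm{C}\sum_{i=1}^{m}\Big(\int_{B_1(\mathbf{0})}G\big(4|u_i|\big)\,dx+\lambda|B_1(\mathbf{0})|\Big)\ \le\ \mathrm{C}\Big(G\big(\|\mathbf{u}\|_{L^\infty(B_1(\mathbf{0}))}\big)+\lambda\Big),
$$
with $\mathrm{C}=\mathrm{C}(n,m,\kappa_0,\beta,\delta,g_0)$.

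Inserting the last estimate into the previous display and invoking the concavity of $G^{-1}$ with $G^{-1}(0)=0$ — whence $G^{-1}(\mathrm{C}t)\le\mathrm{C}\,G^{-1}(t)$ for $\mathrm{C}\ge1$ and $G^{-1}(a+b)\le G^{-1}(a)+G^{-1}(b)$ — together with the identity $G^{-1}(G(t))=t$, one arrives at
$$
|\nabla\mathbf{u}(\mathbf{0})|\ \le\ \mathrm{C}\,G^{-1}\!\Big(\mathrm{C}\,G\big(\|\mathbf{u}\|_{L^\infty(B_1(\mathbf{0}))}\big)+\mathrm{C}\lambda\Big)\ \le\ \mathrm{C}\Big(\|\mathbf{u}\|_{L^\infty(B_1(\mathbf{0}))}+G^{-1}(\lambda)\Big),
$$
which is the asserted bound; here $G^{-1}(\lambda)$ plays the role of the ``$\lambda$'' in the statement and is absorbed into the constant after the standard normalization $\lambda\le1$ (or by rescaling), and the constant depends only on $n,m,\kappa_0,\beta,\delta,g_0$, as required.

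I expect the single genuinely delicate point to be precisely this last linearization: for the weakly coupled $p$-Laplacian of \cite{BFS24} the corresponding corollary is immediate from $p$-homogeneity, whereas here Theorem~\ref{gradHold} and Proposition~\ref{CaccioIne} combine only into a bound of the shape $G^{-1}\!\big(\mathrm{C}\,G(\cdot)+\mathrm{C}\lambda\big)$, and turning this into a right-hand side that is \emph{linear} in $\|\mathbf{u}\|_{L^\infty(B_1(\mathbf{0}))}$ — without the constant degenerating — is exactly where the convexity of $G$ and the $\Delta_2$/$\nabla_2$ bounds available because $G\in\mathcal{G}(\delta,g_0)$ with $g_0=\delta+\varepsilon$ must be used.
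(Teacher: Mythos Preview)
Your approach coincides with the paper's: apply Theorem~\ref{gradHold} on a ball compactly contained in $\{|\mathbf{u}|>0\}$ and then Proposition~\ref{CaccioIne} with $\vartheta=0$ to replace the gradient modular by $G(\|\mathbf{u}\|_{L^\infty})$. The only difference is that the paper uses Theorem~\ref{gradHold} exactly as stated (without $G^{-1}$) and simply stops at $|\nabla\mathbf{u}(\mathbf{0})|\le\mathrm{C}\big(G(\|\mathbf{u}\|_{L^\infty(B_1)})+\lambda\big)$, omitting the linearization step you carry out via the concavity of $G^{-1}$; since the corollary is only invoked later (in the proof of Theorem~\ref{MainThm03}) with $\|\mathbf{u}\|_{L^\infty}$ already universally bounded, this discrepancy is immaterial for the applications.
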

\begin{proof}
    By applying Theorem \ref{gradHold}, we obtain the following local  H\"{o}lder estimate for the gradient:
        $$\|{\bf u}\|_{C^{1,\gamma}(B_{1/4}({\bf 0});\mathbb{R}^m)} \le \mathrm{C}\left( \sum_{i=1}^m\int_{B_{1/2}({\bf 0})} G\big(\vert\nabla u_i\vert\big) \ dx + \lambda\right).$$
    On the other hand, making use of the Caccioppoli-type inequality stated in Proposition \ref{CaccioIne} with $\vartheta = 0$, we derive the estimate
        \begin{eqnarray*}
        \|{\bf u}\|_{C^{1,\gamma}(B_{1/4}({\bf 0});\mathbb{R}^m)} &\le& \mathrm{C}\left( \sum_{i=1}^m\int_{B_{1}({\bf 0})} G\big(\vert u_i\vert\big) \ dx + \lambda\right) \\
        &\le& \mathrm{C}\left( |B_1({\bf 0})|\sum_{i=1}^m G\big(\Vert u_i\Vert_{L^\infty(B_1({\bf 0}),\mathbb{R}^m)}\big) + \lambda\right) \\
        &\le& \mathrm{C} \Big(G\big(\Vert {\bf u}\Vert_{L^\infty(B_1({\bf 0}),\mathbb{R}^m)}\big)  + \lambda\Big),
        \end{eqnarray*}
    where in the last inequality we used the monotonicity of $G$, $(G_2)$ and the subadditivity of the norm. This concludes the proof of the corollary.
\end{proof}

\subsection{Linear growth at free boundary points}

The following theorem establishes the compactness of the non-degenerate subclass $\mathcal{G}_{\nu}(\delta, g_0)$ (we refer the reader to Definition \ref{defclasses}) with respect to the appropriate topologies.

\begin{theorem}[{\bf Compactness in $\mathcal{G}_\nu(\delta, g_0, \varepsilon_0)$ - \cite[Theorem 6.1]{BM}}]\label{Thm-Compactness-G_j}
Let $0 < \nu \leq 1$ and $\tau, \mathfrak{L}_0 > 0$. Define
\[
\mathscr{F}_\tau(\rho_0, \mathfrak{L}_0) := \left\{ 
G \in \mathcal{G}_{\nu}(\delta, g_0, \rho_0) \;:\; G(\tau) \leq \mathfrak{L}_0
\right\}.
\]
If $\{G_j\}_{j \in \mathbb{N}} \subset \mathscr{F}_\tau(\rho_0, L)$, then there exists a subsequence, still denoted by $\{G_j\}_{j \in \mathbb{N}}$, and a function $G \in \mathcal{G}(\delta, g_0) \cap C^{2,\beta_0}((0, \infty))$ such that
\[
G_j \to G \text{ in the } C^{2,\gamma} \text{topology on compact subsets of } (0, \infty) \text{ for every } \gamma \in (0, \beta_0),
\]
and
\[
G_j \to G \text{ in the } C^{1} \text{ topology on compact subsets of } [0, \infty).
\]
\end{theorem}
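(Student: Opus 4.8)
The plan is a two-scale Arzel\`{a}--Ascoli argument. First I would establish uniform $C^{2,\nu}$ bounds for the $G_j$ on every compact subinterval of $(0,\infty)$, extract a subsequence converging in $C^{2,\gamma}$ on compact subsets of $(0,\infty)$ for $\gamma<\nu$, then obtain uniform equicontinuity of $\{G_j\}$ and $\{G_j'\}$ up to the origin in order to upgrade (along a further subsequence) to $C^1$-convergence on compact subsets of $[0,\infty)$, and finally pass to the limit in the defining inequalities of Definition \ref{defclasses} to check that the limit lies in $\mathcal{G}(\delta,g_0)$, with $\beta_0=\nu$. The main obstacle will be converting the \emph{averaged} control \eqref{Qg} on $\mathcal{Q}_{g_j}$ into a genuine uniform H\"{o}lder modulus for $G_j''=g_j'$ (this is exactly what separates $C^2$ from $C^{2,\gamma}$ convergence), together with the related nuisance that the interior $C^2$ estimates degenerate as $t\to0$ since they involve $1/t$.

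\emph{Step 1: normalization and uniform $C^2$ bounds on compacts of $(0,\infty)$.} Each $G_j\in\mathcal{G}_\nu(\delta,g_0,\rho_0)$, so $G_j(1)\ge\rho_0$; combining the constraint $G_j(\tau)\le\mathfrak{L}_0$ with property $(G_1)$ of Statement \ref{Statement} (used with $s=\tau$, $t=1$) gives $G_j(1)\le (g_0+1)\mathfrak{L}_0/\xi_0(\tau)=:M_0$, hence $\rho_0\le G_j(1)\le M_0$ uniformly, and by $(g_3)$ also $\rho_0\le g_j(1)\le (g_0+1)M_0$. Property $(g_1)$ then gives, on any $[a,b]\Subset(0,\infty)$, two-sided bounds $0<c(a,b)\le g_j(t)\le C(a,b)$ independent of $j$, and (via $(g_3)$) the same for $G_j$; since \eqref{Ga} reads $g_j'(t)=\mathcal{Q}_{g_j}(t)\,g_j(t)/t$ with $\delta\le\mathcal{Q}_{g_j}\le g_0$, the second derivatives $G_j''=g_j'$ are uniformly bounded on $[a,b]$. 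Thus $\{G_j\}$ is bounded in $C^2([a,b])$ uniformly in $j$.

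\emph{Step 2: the $C^{2,\nu}$ estimate and interior convergence.} Writing $g_j'(s)=\mathcal{Q}_{g_j}(s)\,g_j(s)/s$, I would control each factor in $C^{0,\nu}([a,b])$: condition \eqref{Qg} yields $|\mathcal{Q}_{g_j}(t+\varpi)-\mathcal{Q}_{g_j}(t)|\le\int_{t}^{t+\varpi}|\mathcal{Q}_{g_j}'|\,ds\le (\mathrm{C}_0/a^\nu)\,\varpi^\nu$ for $t,t+\varpi\in[a,b]$, so $\mathcal{Q}_{g_j}$ is uniformly $C^{0,\nu}([a,b])$; the uniform Lipschitz bound from Step 1 makes $g_j$ uniformly $C^{0,\nu}([a,b])$; and $s\mapsto 1/s$ is smooth there. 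Hence $\{G_j''\}$ is bounded in $C^{0,\nu}([a,b])$, i.e. $\{G_j\}$ is bounded in $C^{2,\nu}([a,b])$ uniformly in $j$. By the compact embedding $C^{2,\nu}([a,b])\hookrightarrow C^{2,\gamma}([a,b])$ for $\gamma<\nu$ and a diagonal extraction over an exhaustion $(0,\infty)=\bigcup_k[a_k,b_k]$, a subsequence of $\{G_j\}$ converges in $C^{2,\gamma}$ on compact subsets of $(0,\infty)$ to a function $G$, which inherits the uniform $C^{2,\nu}$ bounds; thus $G\in C^{2,\nu}_{\mathrm{loc}}((0,\infty))$ and we set $\beta_0=\nu$.

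\emph{Step 3: convergence up to $0$ and identification of the limit.} From $(G_1)$ and $(g_1)$ one has, for $t\le1$ and all $j$, $G_j(t)\le(g_0+1)\,\xi_1(t)\,G_j(1)\le \mathrm{C}\,t^{\delta+1}$ and $g_j(t)\le g_j(1)\,t^{\delta}\le \mathrm{C}\,t^{\delta}$ with $\mathrm{C}$ independent of $j$; in particular $G_j(0)=g_j(0)=0$, and these decay estimates together with the interior uniform Lipschitz bound of Step 1 make $\{g_j\}$ and $\{G_j\}$ uniformly bounded and equicontinuous on each $[0,R]$. Arzel\`{a}--Ascoli and a further diagonalization give a subsequence with $g_j\to g$ and $G_j\to G$ uniformly on compact subsets of $[0,\infty)$; since $G_j(t)=\int_0^t g_j$, the limits satisfy $G'=g$, which is precisely $C^1$-convergence on compacts of $[0,\infty)$. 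Finally, $G$ is convex as a pointwise limit of convex functions, $G(0)=0$ while $G(t)\ge \rho_0 t^{\delta+1}/(g_0+1)>0$ for $t>0$ (passing to the limit in the lower bound from $(g_3)$ and $(g_1)$), $G(t)/t\le \mathrm{C}\,t^\delta\to0$ as $t\to0$ and $G(t)/t\ge \rho_0 t^\delta/(g_0+1)\to+\infty$ as $t\to+\infty$, so $G$ is an $\mathcal{N}$-function (Definition \ref{Def-N-function}); moreover $g=G'\in C^0([0,\infty))\cap C^1((0,\infty))$ by Steps 2--3, and passing to the limit in \eqref{Ga} (legitimate since $g_j\to g$, $g_j'\to g'$ pointwise on $(0,\infty)$ and $g(s)>0$ there) gives $\delta\le g'(s)s/g(s)\le g_0$. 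Hence $G\in\mathcal{G}(\delta,g_0)\cap C^{2,\beta_0}((0,\infty))$, as claimed.
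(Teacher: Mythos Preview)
The paper does not supply its own proof of this theorem: it is quoted verbatim as \cite[Theorem 6.1]{BM} and used as a black box in the blow-up analysis of Proposition \ref{Prop4.2}, so there is nothing in the paper against which to compare your argument line by line.

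That said, your outline is the natural one and is essentially what any proof of such a statement must do. The normalization in Step~1 is correct (you use $(G_1)$ and $(g_3)$ properly to pin down $G_j(1)$ and $g_j(1)$), and the key observation in Step~2 --- that the integral bound \eqref{Qg} converts into a genuine $C^{0,\nu}$ modulus for $\mathcal{Q}_{g_j}$ on any $[a,b]\Subset(0,\infty)$, which then propagates to $g_j'=G_j''$ via the factorization $g_j'=\mathcal{Q}_{g_j}\cdot g_j/s$ --- is exactly the point of introducing the class $\mathcal{G}_\nu$. The identification $\beta_0=\nu$ is consistent with this. Your Step~3 is slightly terse where you claim equicontinuity of $\{g_j\}$ on $[0,R]$: the interior Lipschitz constant from Step~1 blows up like $1/a$ as $a\to0$, so you should make explicit the standard $\varepsilon/3$ splitting (use the uniform decay $g_j(t)\le Ct^{\delta}$ near $0$ to handle small arguments, and the interior Lipschitz bound away from $0$). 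With that detail filled in, the argument is complete and matches the strategy one would expect in the cited reference.
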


\medskip

Many estimates for almost-minimizers of functionals of the type $\mathcal{J}_G$ involve universal constants, namely constants that depend only on the ellipticity parameters $\delta$ and $g_0$. In some cases, these constants also rely on the value of $G(1)$. 

For this reason, our almost-minimizers must admit rescalings that normalize the value $G(1)$. This property is essential to enable a compactness argument based on the previous theorem, which will be employed in the blow-up analysis presented in the next proposition.

For $\varrho > 0$, we define the following normalized rescalings:
\[
G_{\varrho}(t) := \frac{G(\varrho t)}{\varrho g(\varrho)} 
\quad \text{and} \quad 
G^*_{\varrho}(t) := G(\varrho t).
\]
The next proposition collects key properties of these rescalings.

\begin{proposition}[{\bf Scaling properties - \cite[Proposition 6.1]{BM}}]\label{prop:scaling} The following properties hold:
\begin{itemize}
    \item[(S-1)] If $G \in \mathcal{G}(\delta, g_0)$, then $G_{\varrho} \in \mathcal{G}\left(\delta, g_0, (1 + g_0)^{-1}\right)$ and $G_{\varrho}(1) \leq 1$;
    \item[(S-2)] If $G \in \mathcal{G}_{\nu}(\delta, g_0)$, then $G_{\varrho} \in \mathcal{G}_{\nu}\left(\delta, g_0, (1 + g_0)^{-1}\right)$ and $G_{\varrho}(1) \leq 1$;
    \item[(S-3)] If $G \in \mathcal{G}(\delta, g_0, \rho_0)$ and $\varrho \geq 1$, then $G^*_{\varrho} \in \mathcal{G}(\delta, g_0, \rho_0)$;
    \item[(S-4)] If $G \in \mathcal{G}_{\nu}(\delta, g_0, \rho_0)$ and $\varrho \geq 1$, then $G^*_{\varrho} \in \mathcal{G}_{\nu}(\delta, g_0, \rho_0)$.
\end{itemize}
\end{proposition}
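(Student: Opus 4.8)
The plan is to verify each of the four assertions by a single chain-rule computation together with the structural estimate $(g_3)$ of Statement \ref{Statement}, the key point being that the logarithmic derivative $\mathcal{Q}_g(s)=g'(s)s/g(s)$ is invariant under both rescalings. First I would differentiate: one checks at once that $G_\varrho' = g_\varrho$ with $g_\varrho(t) := g(\varrho t)/g(\varrho)$, and $(G^*_\varrho)' = g^*_\varrho$ with $g^*_\varrho(t) := \varrho\, g(\varrho t)$. Since $g\in C^0([0,\infty))\cap C^1((0,\infty))$ by \eqref{Lieberman-Cond01}, so are $g_\varrho$ and $g^*_\varrho$, whence \eqref{Lieberman-Cond01} holds for $G_\varrho$ and $G^*_\varrho$. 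The $\mathcal{N}$-function axioms of Definition \ref{Def-N-function} transfer immediately under the substitution $s=\varrho t$: one has $G_\varrho(t)=0\iff t=0$ and $G^*_\varrho(t)=0\iff t=0$ because $\varrho>0$, while $G_\varrho(t)/t = g(\varrho)^{-1}\,G(\varrho t)/(\varrho t)$ and $G^*_\varrho(t)/t = \varrho\, G(\varrho t)/(\varrho t)$ inherit the limits in item $(iii)$ from those of $G$; convexity follows since $g_\varrho$ and $g^*_\varrho$ are strictly increasing, a consequence of $\mathcal{Q}_{g_\varrho},\mathcal{Q}_{g^*_\varrho}\ge\delta>0$ together with positivity of $g_\varrho,g^*_\varrho$ on $(0,\infty)$.

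The heart of the matter is the scale invariance of the logarithmic derivative. A direct computation gives
$$\mathcal{Q}_{g_\varrho}(t) = \frac{g_\varrho'(t)\,t}{g_\varrho(t)} = \frac{\varrho\, g'(\varrho t)\,t}{g(\varrho t)} = \mathcal{Q}_g(\varrho t), \qquad \mathcal{Q}_{g^*_\varrho}(t) = \frac{(g^*_\varrho)'(t)\,t}{g^*_\varrho(t)} = \frac{\varrho^2 g'(\varrho t)\,t}{\varrho\, g(\varrho t)} = \mathcal{Q}_g(\varrho t).$$
Since $G\in\mathcal{G}(\delta,g_0)$ means $\delta\le\mathcal{Q}_g(s)\le g_0$ for every $s>0$, the same bound holds verbatim for $\mathcal{Q}_{g_\varrho}$ and $\mathcal{Q}_{g^*_\varrho}$; thus \eqref{Ga} is preserved with the same constants, so $G_\varrho,G^*_\varrho\in\mathcal{G}(\delta,g_0)$. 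To identify the third (``$\rho_0$'') parameter I would evaluate at $t=1$: for $G_\varrho$ one has $G_\varrho(1)=G(\varrho)/(\varrho\, g(\varrho))$, and applying $(g_3)$ at $s=\varrho$, namely $\varrho g(\varrho)/(g_0+1)\le G(\varrho)\le\varrho\, g(\varrho)$, yields $(1+g_0)^{-1}\le G_\varrho(1)\le1$, which is precisely (S-1). For $G^*_\varrho$ with $\varrho\ge1$, monotonicity of the $\mathcal{N}$-function $G$ gives $G^*_\varrho(1)=G(\varrho)\ge G(1)\ge\rho_0$, which is (S-3); this is the only place where the hypothesis $\varrho\ge1$ is used.

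For (S-2) and (S-4) it remains to check that \eqref{Qg} survives the rescaling. Using $\mathcal{Q}_{g_\varrho}(s)=\mathcal{Q}_g(\varrho s)$ from above, hence $\mathcal{Q}_{g_\varrho}'(s)=\varrho\,\mathcal{Q}_g'(\varrho s)$, the change of variables $u=\varrho s$ gives, for all $t,\varpi>0$,
$$\int_t^{t+\varpi} |\mathcal{Q}_{g_\varrho}'(s)|\, ds = \int_{\varrho t}^{\varrho t + \varrho\varpi} |\mathcal{Q}_g'(u)|\, du \le \frac{\mathrm{C}_0}{(\varrho t)^\nu}(\varrho\varpi)^\nu = \frac{\mathrm{C}_0}{t^\nu}\,\varpi^\nu,$$
so \eqref{Qg} holds for $G_\varrho$ with the same $\mathrm{C}_0$ and $\nu$; the identical computation applies to $g^*_\varrho$, using $\mathcal{Q}_{g^*_\varrho}(s)=\mathcal{Q}_g(\varrho s)$. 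Combined with the previous paragraph this yields (S-2) and (S-4).

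I do not expect a serious obstacle here: the whole argument is the chain rule plus one change of variables, with all the structural content absorbed into the invariance $\mathcal{Q}_{g_\varrho}=\mathcal{Q}_{g^*_\varrho}=\mathcal{Q}_g(\varrho\,\cdot)$ and the two-sided bound $(g_3)$. The only point deserving a little care is checking the $\mathcal{N}$-function conditions $(ii)$–$(iii)$ for the rescalings rather than taking them for granted, but as indicated this reduces to the substitution $s=\varrho t$ and the corresponding properties of $G$.
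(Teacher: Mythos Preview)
Your argument is correct. The paper does not actually prove this proposition at all: it is stated with an attribution to \cite[Proposition 6.1]{BM} and used as a black box. Your self-contained verification via the chain-rule identity $\mathcal{Q}_{g_\varrho}(t)=\mathcal{Q}_{g^*_\varrho}(t)=\mathcal{Q}_g(\varrho t)$, the two-sided bound $(g_3)$ for $G_\varrho(1)$, and the change of variables in \eqref{Qg} is exactly the elementary computation one would expect, and every step checks out. In particular, the cancellation of the $\varrho^\nu$ factors in the integral estimate is the point that makes \eqref{Qg} scale-invariant, and you handle it cleanly. There is nothing to compare against within the paper; your proof simply supplies what the paper outsources to the reference.
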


\bigskip
In the next proposition, we employ the following notation:
\[
u_{r, \mathrm{T}}(x) := \frac{u(r x)}{\mathrm{T}}.
\]

\begin{proposition}\label{Prop4.2}
Let $\{G_j\}_{j \in \mathbb{N}}$ be a sequence such that $G_j  \in \mathcal{G}_{\nu}(\delta, g_0, \rho_0)$, for any $j\in\mathbb{N}$. Consider $\mathbf{u}^j = (u^j_1, \dots, u^j_m)$ a sequence of bounded  $(\kappa,\beta)$-almost-minimizers of $\mathcal{J}_{G_j}$ in $B_2$. Define
\[
\mathbf{v}^j(x) := \mathbf{u}^j_{r_j, \mathrm{T}_j}(x) = \frac{\mathbf{u}^j(r_j x)}{\mathrm{T}_j}, \quad \text{in } B_{2R},
\]
where $0 < R < \frac{1}{r_j}$, $r_j \to 0$ as $j \to \infty$, and $\mathrm{T}_j > 0$. Then, $\mathbf{v}^j = (v^j_1, \dots, v^j_m)$ is an almost-minimizer (concerning its own boundary values) of the scaled functional
\[
\widehat{\mathcal{J}}_{\widehat{G}_j}(\mathbf{v}^j; U) := \int_U \left(\sum_{i=1}^m \widehat{G}_j(|\nabla v^j_i|) +  \widehat{\lambda}_j \chi_{\{|\mathbf{v}^j| > 0\}}\right) \, dx,
\]
with constant $\widehat{\kappa} = \kappa r_j^\beta$ and exponent $\widehat{\beta} = \beta$, where 
$$
\widehat{G}_j(t) \coloneqq G_{\sigma_j}(t) = \frac{G_j(\sigma^{-1}_j t)}{\mathfrak{a}_j}, \,\,\,\sigma_j := \frac{r_j}{\mathrm{T}_j}, \quad \widehat{\lambda}_j  \coloneqq \frac{\lambda_j}{\mathfrak{a}_j},\quad \mathfrak{a}_j:= \sigma^{-1}_j g_j(\sigma^{-1}_j) \quad  \text{and} \quad G^{\prime}_j(t) = g_j(t).
$$

Additionally, if $\|\mathbf{v}^j\|_{L^\infty(B_{2R})} \leq \mathrm{M}$
for any fixed $0 < R < \frac{1}{r_j}$ and for some constant $\mathrm{M} = \mathrm{M}(R) > 0$, then, up to a subsequence, the following holds:
\begin{enumerate}[(i)]
    \item $v^j_i \rightharpoonup v^\infty_i$ weakly in $W^{1,G}(B_R)$, and also strongly in $C^{0, \alpha}(B_R)$ for any $\alpha < \frac{\delta}{g_0}$.    
    \item Furthermore, if $\sigma_j = \frac{r_j}{\mathrm{T}_j} \to 0$ and $G_j \to G$ (in an appropriate topology) as $j \to \infty$, then  $\nabla v_i^j (x) \to \nabla v_i^\infty(x)$ a.e. in $B_R$, as $j\to \infty$, and $v^\infty_i$ is a $g$-harmonic function in $B_R$ with $G^{\prime}(t) = g(t)$.
\end{enumerate}
\end{proposition}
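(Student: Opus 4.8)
\medskip
\noindent\emph{Proof strategy.} The plan is to treat the three assertions in order: the scaling identity, the weak/strong compactness under the uniform $L^\infty$ bound, and the a.e.\ convergence of gradients together with the $g$-harmonicity of the limit.

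\smallskip
\noindent\emph{Step 1: $\mathbf{v}^j$ is an almost-minimizer.} From $\nabla v^j_i(x)=\sigma_j\nabla u^j_i(r_jx)$, the definitions $\widehat G_j(t)=G_j(\sigma^{-1}_jt)/\mathfrak{a}_j$, $\widehat\lambda_j=\lambda_j/\mathfrak{a}_j$, and the substitution $y=r_jx$, one checks the scaling identity
\[
\widehat{\mathcal{J}}_{\widehat G_j}\big(\mathbf{w};B_\rho(y_0)\big)=\frac{1}{\mathfrak{a}_j\,r_j^{\,n}}\,\mathcal{J}_{G_j}\big(\tilde{\mathbf{w}};B_{r_j\rho}(r_jy_0)\big),\qquad\tilde{\mathbf{w}}(y)\coloneqq\mathrm{T}_j\,\mathbf{w}(y/r_j),
\]
valid for any ball $B_\rho(y_0)\Subset B_{2R}$. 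Given a competitor $\mathbf{w}$ for $\mathbf{v}^j$ on $B_\rho(y_0)$ with $0<\rho<1$, the field $\tilde{\mathbf{w}}$ is admissible for $\mathbf{u}^j$ on $B_{r_j\rho}(r_jy_0)\Subset B_2$ (this is where $R<1/r_j$ and $r_j\rho<1$ are used), so the $(\kappa,\beta)$-almost-minimality of $\mathbf{u}^j$ combined with the identity gives \eqref{VarIneqAlmMin} for $\mathbf{v}^j$ with $\widehat\kappa=\kappa r_j^{\,\beta}$ and $\widehat\beta=\beta$. Here I would also record the structural facts used below: by (S-2) of Proposition \ref{prop:scaling}, $\widehat G_j\in\mathcal{G}_\nu(\delta,g_0,(1+g_0)^{-1})$, hence $(1+g_0)^{-1}\le\widehat G_j(1)\le1$; and, since $\sigma^{-1}_j\ge1$ for $j$ large, $(g_3)$ yields $\mathfrak{a}_j=\sigma^{-1}_jg_j(\sigma^{-1}_j)\ge G_j(\sigma^{-1}_j)\ge G_j(1)\ge\rho_0$, so $\widehat\lambda_j\le\lambda/\rho_0$ is bounded, while by $(g_1)$ one has $\mathfrak{a}_j\ge\rho_0\,\sigma_j^{-(1+\delta)}\to\infty$ and hence $\widehat\lambda_j\to0$ whenever $\sigma_j\to0$.

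\smallskip
\noindent\emph{Step 2: proof of $(i)$.} Assuming $\|\mathbf{v}^j\|_{L^\infty(B_{2R})}\le\mathrm{M}$, I would apply the Caccioppoli inequality of Proposition \ref{CaccioIne} to $\mathbf{v}^j$ on concentric balls $B_{3R/2}\subset B_{7R/4}\Subset B_{2R}$ with $\vartheta=0$; bounding $\widehat G_j(4|v^j_i|/R)\le\widehat G_j(4\mathrm{M}/R)\le(g_0+1)\xi_1(4\mathrm{M}/R)\widehat G_j(1)$ by $(G_1)$ and the normalization, together with $\widehat\lambda_j\le\lambda/\rho_0$, yields $\sup_j\sum_i\int_{B_{3R/2}}\widehat G_j(|\nabla v^j_i|)\,dx\le\mathrm{C}$. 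Since $(g_1)$ forces $\widehat G_j(t)\ge c\min\{t^{\delta+1},t^{g_0+1}\}$ with $c=c(\delta,g_0)>0$ (again using $\widehat G_j(1)\ge(1+g_0)^{-1}$), this gives a uniform bound for $v^j_i$ in $W^{1,\delta+1}(B_{3R/2})$, so up to a subsequence $v^j_i\rightharpoonup v^\infty_i$ weakly in $W^{1,\delta+1}(B_R)$. On the other hand $\widehat\kappa\le\kappa_0$, so Theorem \ref{Holdereg} applied on $B_{3R/2}$ with the function $\widehat G_j$ (whose relevant constants, and the value of $\widehat G_j^{-1}$ on bounded arguments, are uniform in $j$ because $\widehat G_j(1)\in[(1+g_0)^{-1},1]$ and the energy is bounded) gives a uniform bound for $\mathbf{v}^j$ in $C^{0,\delta/g_0}(\overline{B_R};\mathbb{R}^m)$; by Arzel\`{a}--Ascoli, along a subsequence $v^j_i\to v^\infty_i$ in $C^{0,\alpha}(\overline{B_R})$ for every $\alpha<\delta/g_0$, and the two limits agree.

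\smallskip
\noindent\emph{Step 3: proof of $(ii)$.} Assume in addition $\sigma_j\to0$, so $\widehat\kappa\to0$ and $\widehat\lambda_j\to0$ by Step 1. Since $\{\widehat G_j\}\subset\mathscr{F}_1\big((1+g_0)^{-1},1\big)$, Theorem \ref{Thm-Compactness-G_j} provides (up to a subsequence) $\widehat G_j\to G\coloneqq\widehat G$ in $C^{2,\gamma}_{loc}((0,\infty))$ and in $C^{1}_{loc}([0,\infty))$ with $\widehat G\in\mathcal{G}(\delta,g_0)\cap C^{2,\beta_0}((0,\infty))$ -- this is the convergence intended by ``$G_j\to G$''. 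Let $w^j_i$ be the $\widehat g_j$-harmonic replacement of $v^j_i$ in $B_R$, $\widehat g_j\coloneqq\widehat G_j'$. Lemma \ref{controlVg} and the energy bound of Step 2 give
\[
\int_{B_R}\sum_{i=1}^m\big|\mathbf{V}_{\widehat G_j}(\nabla v^j_i)-\mathbf{V}_{\widehat G_j}(\nabla w^j_i)\big|^2\,dx\le\mathrm{C}\,\widehat\kappa\,R^\beta\!\int_{B_R}\sum_{i=1}^m\widehat G_j(|\nabla v^j_i|)\,dx+\mathrm{C}\,\widehat\lambda_jR^n\longrightarrow0,
\]
with $\mathbf{V}_{\widehat G_j}$ as in \eqref{defVg}. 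By minimality $\int_{B_R}\widehat G_j(|\nabla w^j_i|)\le\int_{B_R}\widehat G_j(|\nabla v^j_i|)$, so $w^j_i$ is bounded in $W^{1,\delta+1}(B_R)$; combining this with \eqref{31}, Lemma \ref{L22}$(b)$ and Lemma \ref{L22}$(a)$ (all with constants uniform in $j$ by the normalization), one gets uniform $C^{1,\alpha}_{loc}(B_R)$ bounds, so along a subsequence $w^j_i\to w^\infty_i$ in $C^{1}_{loc}(B_R)$. Passing to the limit in the weak formulation of the $\widehat g_j$-Laplace equation, using $\widehat g_j\to g\coloneqq\widehat G'$ uniformly on compact subsets of $[0,\infty)$, shows $w^\infty_i$ is $g$-harmonic in $B_R$, and the uniform $W^{1,\delta+1}(B_R)$ bound, continuity of the trace, and $v^j_i\to v^\infty_i$ in $C^0(\overline{B_R})$ force $w^\infty_i=v^\infty_i$ on $\partial B_R$. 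Finally, by Lemma \ref{L22}$(a)$ the inverses $\mathbf{V}_{\widehat G_j}^{-1}$ are $\gamma$-H\"{o}lder with $\gamma$ and constant depending only on $\delta,g_0$ (using again $\widehat G_j(1)\in[(1+g_0)^{-1},1]$), so $|\nabla v^j_i-\nabla w^j_i|\le\mathrm{C}\,|\mathbf{V}_{\widehat G_j}(\nabla v^j_i)-\mathbf{V}_{\widehat G_j}(\nabla w^j_i)|^{\gamma}$ pointwise; H\"{o}lder's inequality and the displayed limit then give $\nabla v^j_i-\nabla w^j_i\to0$ in $L^1(B_R)$, hence a.e.\ along a subsequence, and with $\nabla w^j_i\to\nabla w^\infty_i$ a.e.\ we obtain $\nabla v^j_i\to\nabla w^\infty_i$ a.e.\ in $B_R$. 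Since also $\nabla v^j_i\rightharpoonup\nabla v^\infty_i$, the a.e.\ and weak limits coincide, so $\nabla v^\infty_i=\nabla w^\infty_i$ and, with the matching traces, $v^\infty_i=w^\infty_i$, which is $g$-harmonic; in particular $\nabla v^j_i\to\nabla v^\infty_i$ a.e.\ in $B_R$.

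\smallskip
\noindent\emph{Main obstacle.} Apart from the routine Step 1, the real work is Step 3: because the Young functions $\widehat G_j$ vary with $j$, one cannot appeal to a single convexity/monotonicity estimate but must keep \emph{every} constant uniform in $j$ -- the Lieberman bounds $(g_1)$--$(G_1)$, the comparison inequality \eqref{SSMI} hidden in Lemma \ref{controlVg}, and above all the H\"{o}lder modulus of $\mathbf{V}_{\widehat G_j}^{-1}$ from Lemma \ref{L22}$(a)$ -- which is exactly what the normalization $\widehat G_j(1)\in[(1+g_0)^{-1},1]$ supplied by Proposition \ref{prop:scaling}(S-2) makes possible. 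A secondary delicate point is passing to the limit in the degenerate/singular equation for the replacements $w^j_i$ while the vector field $z\mapsto\widehat g_j(|z|)\,z/|z|$ is only converging in $C^1_{loc}$, and matching the boundary traces of $w^j_i$ and $v^j_i$ in the limit.
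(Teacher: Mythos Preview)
Your proof is correct and follows the paper's overall architecture: the scaling identity, then Caccioppoli plus the uniform H\"older estimates for compactness, then the harmonic-replacement comparison via $\mathbf{V}_{\widehat G_j}$ in Step~3. Two technical choices in Step~3 differ from the paper and are worth flagging. First, to convert $\int_{B_R}|\mathbf{V}_{\widehat G_j}(\nabla v_i^j)-\mathbf{V}_{\widehat G_j}(\nabla w_i^j)|^2\,dx\to0$ into a.e.\ gradient convergence, the paper appeals to the modular inequality $\widehat G_j(|a-b|)\le C\,|\mathbf{V}_{\widehat G_j}(a)-\mathbf{V}_{\widehat G_j}(b)|^2$ (via \cite[Theorem 2.3]{MW08}) and then the embedding into $W^{1,\delta+1}$, whereas you use the uniform H\"older modulus of $\mathbf{V}_{\widehat G_j}^{-1}$ from Lemma~\ref{L22}(a) followed by H\"older's inequality; both routes are valid once the normalization $\widehat G_j(1)\in[(1+g_0)^{-1},1]$ from Proposition~\ref{prop:scaling} is in hand. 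Second, for the $g$-harmonicity of $v_i^\infty$, the paper passes to the limit \emph{directly in the almost-minimality inequality} for $\mathbf{v}^j$ by dominated convergence (building an $L^1$ majorant for $\widehat G_j(|\nabla v_i^j|)$ out of the $C^1_{loc}$ bounds on the replacements together with \eqref{convmodstrong}), while you instead pass to the limit in the weak PDE satisfied by the replacements $w_i^j$ and then identify $w_i^\infty=v_i^\infty$ through the trace argument; the paper's route avoids the trace-matching step, yours avoids the majorant construction. As a minor further difference, the paper obtains the $C^{1,\alpha}_{loc}$ compactness of the replacements by citing Lieberman \cite[Theorem 1.7]{L} directly rather than assembling it from \eqref{31} and Lemma~\ref{L22}.
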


\begin{proof} We split the proof into three steps:
\begin{enumerate}
    \item[\checkmark] $\mathbf{v}^j$ is an almost-minimizer of the functional $\widehat{\mathcal{J}}_{\widehat{G}_j}$

    {\bf Claim:} $\mathbf{v}^j$ is an almost-minimizer of the functional $\widehat{\mathcal{J}}_{\widehat{G}_j}$, with constant $\widehat{\kappa}_j = \kappa r_j^\beta$ and exponent $\widehat{\beta} = \beta$, i.e.,
\begin{equation}\label{eq:almost_min}
\widehat{\mathcal{J}}_{\widehat{G}_j}\left(\mathbf{v}^j; B_\rho(x_0)\right) \leq \left(1 + \widehat{\kappa}_j \rho^{\widehat{\beta}}\right) \widehat{\mathcal{J}}_{\widehat{G}_j}\left(\mathbf{w}; B_\rho(x_0)\right),
\end{equation}
for every ball $B_\rho(x_0) \subset B_{\frac{1}{r_j}}$ and for every $\mathbf{w} \in W^{1,G}(B_\rho(x_0); \mathbb{R}^m)$ satisfying $\mathbf{w} = \mathbf{v}^j$ on $\partial B_\rho(x_0)$.

Indeed, by the almost-minimality property of $\mathbf{u}^j$, we have
\[
\mathcal{J}_{G_j}\left(\mathbf{u}^j; B_{r_j \rho}(y_0)\right) \leq \left(1 + \kappa (r_j \rho)^\beta\right) \mathcal{J}_{G_j}\left(\mathbf{w}^j; B_{r_j \rho}(y_0)\right),
\]
where $y_0 = r_j x_0$ and $\mathbf{w}^j(x) := \mathrm{T}_j \mathbf{w}\left(\frac{x}{r_j}\right)$. 

Now, by expanding the definition of $\mathcal{J}_{G_j}$, this inequality reads
\begin{equation}\label{eq:almost_min_original}
\int \limits_{B_{r_j \rho}(y_0)} \left(\sum_{i=1}^m G_j(|\nabla u^j_i(y)|) + \lambda_j \chi_{\{|\mathbf{u}^j| > 0\}}\right)\,dy \leq \left(1 + \kappa r_j^\beta \rho^\beta\right)\int \limits_{B_{r_j \rho}(y_0)} \left(\sum_{i=1}^m G_j(|\nabla w^j_i(y)|) + \lambda_j \chi_{\{|\mathbf{w}^j| > 0\}}\right)\,dy.
\end{equation}

Next, by applying the change of variables $y = r_j x$, we obtain
$$
\begin{array}{rcl}
   \displaystyle \widehat{\mathcal{J}}_{\widehat{G}_j}\left(\mathbf{v}^j; B_\rho(x_0)\right) &  =  & \displaystyle\int_{B_\rho(x_0)} \left(\sum_{i=1}^m \widehat{G}_j(|\nabla v^j_i|) +  \widehat{\lambda}_j \chi_{\{|\mathbf{v}^j| > 0\}}\right) \, dx\\
     & = &  \displaystyle \frac{1}{\mathfrak{a}_j} \int_{B_\rho(x_0)} \left(\sum_{i=1}^m G_j(\sigma_j^{-1}|\nabla v^j_i|) +  \lambda_j \chi_{\{|\mathbf{v}^j| > 0\}}\right) \, dx\\
     & = &  \displaystyle \frac{1}{\mathfrak{a}_j} \int_{B_\rho(x_0)} \left(\sum_{i=1}^m G_j(|\nabla u^j_i(r_jx)|) +  \lambda_j \chi_{\{|\frac{\mathbf{u}^j}{\mathrm{T}_j}| > 0\}}(r_j x)\right) \, dx\\
     & = & \displaystyle \frac{r_j^{-n}}{\mathfrak{a}_j} \int_{B_{r_j \rho}(y_0)} \left(\sum_{i=1}^m G_j(|\nabla u^j_i(y)|) +  \lambda_j \chi_{\{|\mathbf{u}^j| > 0\}}(y)\right) \, dy.
\end{array}
$$
Similarly, we have
$$
\widehat{\mathcal{J}}_{\widehat{G}_j}\left(\mathbf{w}; B_\rho(x_0)\right) = \displaystyle  \frac{r_j^{-n}}{\mathfrak{a}_j}\int_{B_{r_j \rho}(y_0)} \sum_{i=1}^m G_j(|\nabla w^j_i(y)|) + \lambda_j \chi_{\{|\mathbf{w}^j| > 0\}}(y)\,dy.
$$

%in the right-hand side of \eqref{eq:almost_min_original}, we obtain
%$$
%\begin{array}{rcl}
%   \displaystyle \int_{B_{r_j \rho}(y_0)} \sum_{i=1}^m G_j(|\nabla w^j_i(y)|) + \lambda_j \chi_{\{|\mathbf{w}^j| > 0\}}(y)\,dy & = & \displaystyle  r_j^n \int_{B_\rho(x_0)} \sum_{i=1}^m G_j(| \nabla w^j_i(r_j x)|) + \lambda_j \chi_{\{|\mathbf{w}^j|> 0\}}(r_j x)\,dx %\\
%     &  = & \displaystyle  r_j^n \mathfrak{a}_j\int_{B_\rho(x_0)}  \sum_{i=1}^m G_j(\sigma_j^{-1}|\nabla w_i(x)|)dx\\
%     & + &   \displaystyle r_j^n \mathfrak{a}_j\int_{B_\rho(x_0)} \lambda_j \chi_{\{|\mathbf{w}|>0\}}(x)\,dx,
%\end{array}
%$$
%\begin{align*}
%&\int_{B_{r_j \rho}(y_0)} \sum_{i=1}^m \widehat{G}_j(|\nabla w_j^i(y)|) + \lambda_j \chi_{\{|w_j| > 0\}}(y)\,dy \\
%&= r_j^n \int_{B_\rho(x_0)} \sum_{i=1}^m \widehat{G}_j(\left| \nabla w_j^i(r_j x) \right|) + \lambda_j \chi_{\{|w_j|> 0\}}(r_j x)\,dx \\
%&= r_j^n \int_{B_\rho(x_0)} \sigma^{-1}_j g_j(\sigma^{-1}_j) \sum_{i=1}^m \widehat{G}_j(|\nabla w^i(x)|) + \lambda_j \chi_{\{|w|>0\}}(x)\,dx,
%\end{align*}
%and a similar formula holds for $\mathbf{u}^j$ and $\mathbf{v}^j = \mathbf{u}^j_{r_j, \mathrm{T}_j}$ replacing $w_j$ and $w$.

Therefore, by substituting such identities into \eqref{eq:almost_min_original}, we deduce
$$
\displaystyle \int_{B_\rho(x_0)} \sum_{i=1}^m \widehat{G}_j(|\nabla v_j^i|) + \widehat{\lambda}_j \chi_{\{|v_j| > 0\}}\,dx \leq \left(1 + \kappa r_j^\beta \rho^\beta\right)\int_{B_\rho(x_0)} \sum_{i=1}^m \widehat{G}_j(|\nabla w^i|) + \widehat{\lambda}_j \chi_{\{|w| > 0\}}\,dx,
$$
which is precisely the desired result \eqref{eq:almost_min}.
    \item[\checkmark] Proof of item (i)

    Using the assumption $\|\mathbf{v}^j\|_{L^\infty(B_{2R})} \leq \mathrm{M}$ together with Proposition \ref{CaccioIne}, we obtain uniform $W^{1,G}(B_R)$ estimates for $v^j_i$, for $j$ sufficiently large. Consequently, passing to a subsequence if necessary, we obtain
\[
v^j_i \rightharpoonup v^\infty_i \quad \text{weakly in } \quad W^{1,G}(B_R).
\]
Moreover, by applying the uniform H\"{o}lder estimates from Theorem \ref{Holdereg}, it follows that
\[
v^j_i \to v^\infty_i \quad \text{in } \quad  C^{0,\alpha}(B_R),
\]
for any $\alpha \in  \left(0, \frac{\delta}{g_0}\right)$, thereby concluding the proof of part (i).

 \item[\checkmark] Proof of item (ii)

Define $z^j_i$ to be the $\widehat{g}_j$-harmonic replacement of $v^j_i$ in $B_R$, where $\widehat{g}_j=\widehat{G}^{\prime}_j$. Then, by Lemma \ref{controlVg}, we have
\begin{equation}\label{eq:harmonic_approx}
\int_{B_R} \left| \mathbf{V}_{\widehat{G}_j}\left(\nabla v^j_i\right) - \mathbf{V}_{\widehat{G}_j}\left(\nabla z^j_i\right) \right|^2 dx \leq \mathrm{c} \kappa r_j^\beta R^\beta \int_{B_R} \sum_{i=1}^m \widehat{G}_j(|\nabla v^j_i|) dx + \mathrm{c} \widehat{\lambda}_j R^n.
\end{equation}
%where $\sigma_j = \frac{r_j}{\mathrm{T}_j}$.

Further, by $(G_3)$ applied to $\widehat{G}_j$ and \cite[see Theorem 2.3]{MW08} we have
    \begin{eqnarray*}
        \int_{B_R} \left| \mathbf{V}_{\widehat{G}_j}\left(\nabla v^j_i\right) - \mathbf{V}_{\widehat{G}_j}\left(\nabla z^j_i\right) \right|^2 dx &\ge& \mathrm{C}\int_{B_R} \Big(\widehat{G}^{\prime}_j\big(|\nabla v_i^j|\big)|\nabla v_i^j| - \widehat{G}^{\prime}_j\big(|\nabla z_i^j|\big)|\nabla z_i^j| \Big)\, dx \\
        &\ge& \mathrm{C}\int_{B_R} \Big(\widehat{G}_j\big(|\nabla v_i^j|\big) - \widehat{G}_j\big(|\nabla z_i^j|\big) \Big)\, dx \\ 
        &\ge& \mathrm{C} \int_{B_R} \widehat{G}_j\big(|\nabla v_i^j - \nabla z_i^j|\big)\, dx,
    \end{eqnarray*}
where $C>0$ depends only on $n,\delta$, and $g_0$. Thus, passing to the limit $j \to \infty$, by \eqref{eq:harmonic_approx} we deduce that, up to a subsequence,
    \begin{equation}\label{convmodstrong}
        \int_{B_R} \widehat{G}_j\big(|\nabla v_i^j-\nabla z_i^j|\big)\, dx \to 0,
    \end{equation}
as $j \to \infty$, for each $i=1,\dots,m$. Since $\widehat{g}_j$ satisfies the growth condition given by inequality \eqref{Ga}, with the same constants $\delta$ and $g_0$, it is well known that the embedding $W^{1,\widehat{G}_j} \hookrightarrow W^{1,\delta+1}$ is continuous. As a consequence, and using the fact that $v_i^j - z_i^j = 0$ on $\partial B_R$ (in the trace sense), we conclude that
\begin{equation}\label{strongc}
v^j_i - z^j_i \to 0 \quad \text{in } W^{1,\delta+1}(B_R).
\end{equation}
This strong convergence implies that
    \begin{equation}\label{aeconvgrad}
        \nabla \big(v_i^j - z_i^j\big) (x) \to 0\quad \mbox{a.e. in} \ B_R,
    \end{equation}
passing to a subsequence, if necessary. Next, since $z_i^j$ is $\widehat{g}_j$-harmonic we may apply the classical Lierberman's result (see \cite[Theorem 1.7]{L}) to ensure that there exists $\mathrm{C}=\mathrm{C}(n,\delta,g_0)>0$ such that $\|z_i^j\|_{C_{loc}^{1,\alpha_0}(B_R)}\le \mathrm{C}$, for some $\alpha_0\in(0,1)$. Then, we can find $z_i^\infty \in C_{loc}^{1,\alpha_0/2}(B_R)$ such that, up to a subsequence, $\nabla z_i^j \to \nabla z_i^\infty$ uniformly on compact sets of $B_R$. By the strong convergence \eqref{strongc} in $W^{1,\delta+1}(B_R)$, we get that $z_i^\infty \equiv v_i^\infty$. Therefore, \eqref{aeconvgrad} yields
    $$\nabla v_i^j(x) \to \nabla v_i^\infty(x), \quad \mbox{a.e.}\ x\in B_R,$$
as $j\to \infty$, proving the first part of item (ii). 

For the second part, we must prove that $v_i^\infty$ is $g$-harmonic. To do so, we use the fact that ${\bf v}^j$ is a $(\kappa,\beta)$-almost-minimizer of $\widehat{\mathcal{J}}_{\widehat{G}_j}$ to get

\begin{equation}\label{almostdef}
    \int_{B_1} \left(\sum_{i=1}^m \widehat{G}_j\big(|\nabla v_i^j|\big) + \hat{\lambda}_j\chi_{\{|{\bf v}^j|>0\}}\right) \, dx \le \big(1+\kappa r_j^\beta\big)  \int_{B_1} \left(\sum_{i=1}^m \widehat{G}_j\big(|\nabla (v_i^j+\varphi_i)|\big) + \hat{\lambda}_j\chi_{\{|{\bf v}^j+{\bf \varphi}|>0\}}\right) \, dx,
\end{equation}
for any $\varphi \in C_0^\infty(B_1,\mathbb{R}^m)$. Next, by the $C^{1,\frac{\alpha_0}{2}}$-estimate of $z_i^j$ we know that $\nabla z_i^j$ is uniformly bounded in $B_1$. Then, by uniform convergence
    \begin{equation}\label{convstrongnabla}
        \int_{B_1} \big| \widehat{G}_j\big(|\nabla z_i^j|\big) - \widehat{G}_j\big(|\nabla v_i^\infty|\big)\big| \, dx \to 0,
    \end{equation}
as $j \to \infty$, for any $i=1,\dots,m$. Further, by $(G_2)$ we obtain
    $$\widehat{G}_j\big(|\nabla v_i^j|\big) \le 2^{g_0}(1+g_0)\Big(\widehat{G}_j\big(|\nabla \big(v_i^j-z_i^j\big)|\big) + \widehat{G}_j\big(|\nabla z_i^j|\big)\Big),$$
which implies by \eqref{convmodstrong} and \eqref{convstrongnabla} that there exists $h \in L^1(B_1,\mathbb{R}^m)$ such that
    \begin{equation}\label{dominated}
        \widehat{G}_j\big(|\nabla v_i^j|\big) \le h \quad \mbox{a.e. in} \ B_1,
    \end{equation}
for each $i=1,\dots,m$. Thus, Lebesgue's Dominated Convergence Theorem ensures that
    $$\int_{B_1} \sum_{i=1}^m \widehat{G}_j\big(|\nabla v_i^j|\big) \, dx \to \int_{B_1} \sum_{i=1}^m G\big(|\nabla v_i^\infty|\big) \, dx \quad \mbox{as} \ j \to \infty.$$
Furthermore, by \eqref{dominated} we get
    $$\widehat{G}_j\big(|\nabla \big(v_i^j+\varphi_i\big)(x)|\big) \le h_\varphi(x),$$
a.e. in $B_1$, where $\displaystyle h_\varphi \coloneqq \mathrm{C}(g_0)\big[h + (1+\sup_{B_1} |\nabla \varphi|)^{1+g_0}\big] \in L^1(B_1,\mathbb{R}^m)$. Thus, we may apply Lebesgue's Dominated Convergence Theorem again to get
    $$\int_{B_1} \sum_{i=1}^m \widehat{G}_j\big(|\nabla \big(v_i^j+\varphi_i\big)|\big) \, dx \to \int_{B_1} \sum_{i=1}^m G\big(|\nabla \big(v_i^\infty+\varphi_i\big)|\big) \, dx \quad \mbox{as} \ j \to \infty.$$
In addition, we can see easily that
    $$\int_{B_1} \hat{\lambda}_j \chi_{\{|{\bf v}^j+{\bf \varphi}|>0\}} \, dx \to 0 \quad \mbox{and} \quad \int_{B_1} \hat{\lambda}_j \chi_{\{|{\bf v}^j|>0\}}\to 0,$$
both as $j \to \infty$. Therefore, passing to the limit of $j\to\infty$ in \eqref{almostdef}, we have
    $$\int_{B_1} \sum_{i=1}^m G\big(|\nabla v_i^\infty|\big) \, dx \le \int_{B_1} \sum_{i=1}^m G\big(|\nabla \big(v_i^\infty+\varphi_i\big)|\big) \, dx,$$
for any $\varphi \in C_0^\infty(B_1,\mathbb{R}^m)$, where  $g(t) = G^{\prime}(t)$ and $\displaystyle G(t) = \lim_{j \to \infty}\widehat{G}_j(t)$ (according Theorem \ref{Thm-Compactness-G_j}), which proves the $g$-harmonicity of ${\bf v}^\infty$, as desired.

 This completes the proof of part (ii) and therefore concludes the proof of the proposition.
 
\end{enumerate}
\end{proof}

\medskip
Before obtaining the Lipschitz continuity of almost-minimizers of $\mathcal{J}_{G}$, we are going to present the following key result concerning the linear growth of almost-minimizers at free boundary points.

\begin{proposition}\label{MainProp02}
Assume that $G \in \mathcal{G}_{\nu}(\delta, g_0, \rho_0)$. Consider ${\bf u}=(u_1,\dots, u_m)$ a $(\kappa,\beta)$-almost-minimizer of $\mathcal{J}_{G}$ in $B_1(x_0)$, where $x_0 \in \mathfrak{F}(\mathbf{u})$. Furthermore, suppose that
\[
\sup_{B_1(x_0)} |\mathbf{u}| \leq \mathrm{M}_0.
\]
Then, there exists a universal constant $\mathrm{C} \geq 1$, depending only on $\mathrm{M}_0$, such that
\begin{equation}\label{Lips decay}
|\mathbf{u}(x)| \leq \mathrm{C} \mathrm{M}_0 |x - x_0|
\end{equation}
for all $x \in B_r(x_0)$ and any $0 < r < 1$.
\end{proposition}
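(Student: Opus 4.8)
The plan is to argue by contradiction via a blow-up/compactness scheme, exploiting the compactness of the non-degenerate class $\mathcal{G}_\nu(\delta,g_0,\rho_0)$ furnished by Theorem~\ref{Thm-Compactness-G_j} together with the convergence result of Proposition~\ref{Prop4.2}. Suppose the estimate fails. Then for every $k\in\mathbb{N}$ there exist $G_k\in\mathcal{G}_\nu(\delta,g_0,\rho_0)$, a $(\kappa,\beta)$-almost-minimizer $\mathbf{u}^k$ of $\mathcal{J}_{G_k}$ in $B_1(x_0^k)$ with $x_0^k\in\mathfrak{F}(\mathbf{u}^k)$ and $\sup_{B_1(x_0^k)}|\mathbf{u}^k|\le \mathrm{M}_0$, together with a radius $r_k\in(0,1)$ and a point $\bar x_k\in B_{r_k}(x_0^k)$ such that $|\mathbf{u}^k(\bar x_k)| > k\,\mathrm{M}_0\,|\bar x_k-x_0^k|$. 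After translating so that $x_0^k=0$, introduce the quantity
\[
\mathrm{S}(\rho)\coloneqq \sup_{B_\rho} |\mathbf{u}^k|,
\]
and use a standard ``doubling/largest radius'' argument (as in \cite{BFS24}, \cite{daSSV}): since $\mathbf{u}^k$ vanishes (componentwise) at the free boundary point $0$ and $\mathbf{u}^k$ is continuous (Theorem~\ref{Holdereg}), one can select radii $\rho_k\to 0$ realizing the maximal ratio, i.e. $\mathrm{S}(\rho_k)/\rho_k \eqqcolon \mathrm{T}_k\to\infty$ while $\mathrm{S}(\rho)\le 2\mathrm{T}_k\rho$ for $\rho\in[\rho_k,1]$ (up to constants). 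This is the step that extracts a well-scaled sequence from the blow-up assumption.

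Next, define the rescalings $\mathbf{v}^k(x)\coloneqq \mathbf{u}^k(\rho_k x)/(\mathrm{T}_k\rho_k)$ in $B_{1/\rho_k}$. By Proposition~\ref{Prop4.2}, each $\mathbf{v}^k$ is a $(\widehat\kappa_k,\beta)$-almost-minimizer of the scaled functional $\widehat{\mathcal J}_{\widehat G_k}$ with $\widehat\kappa_k=\kappa\rho_k^\beta\to 0$, and $\widehat{G}_k=G_{\sigma_k}$ with $\sigma_k=1/\mathrm{T}_k\to 0$; moreover $\widehat G_k\in\mathcal{G}_\nu(\delta,g_0,(1+g_0)^{-1})$ by (S-2) and $\widehat G_k(1)\le 1$ by (S-1). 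The choice of $\rho_k$ gives the uniform bound $\|\mathbf{v}^k\|_{L^\infty(B_{2R})}\le \mathrm{C}\,R$ on every fixed ball, hence Proposition~\ref{Prop4.2}(i)--(ii) applies: passing to a subsequence, $G_k\to G_\infty$ in the topology of Theorem~\ref{Thm-Compactness-G_j} (so $\widehat G_k\to G_\infty$ as well, since $\sigma_j\to0$), $v_i^k\to v_i^\infty$ in $C^{0,\alpha}_{loc}$ with $\nabla v_i^k\to\nabla v_i^\infty$ a.e., and each $v_i^\infty$ is $g_\infty$-harmonic in $\mathbb{R}^n$ with $G_\infty'=g_\infty$.

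Now I extract the contradiction. On the one hand, normalization forces $|\mathbf{v}^k(0)|=|\mathbf{u}^k(0)|/(\mathrm{T}_k\rho_k)$; because $0\in\mathfrak{F}(\mathbf{u}^k)$ one has $|\mathbf{u}^k(0)|=0$... so instead the normalization should be anchored at the near-extremal point: one arranges (by the doubling step) that $\sup_{B_1}|\mathbf{v}^k|\ge c>0$ and that $|\mathbf{v}^k|$ has a zero at the (rescaled) free boundary point $0$; the limit $\mathbf{v}^\infty$ is then a non-trivial globally $g_\infty$-harmonic map, each component $v_i^\infty$ bounded by the global linear bound $|\mathbf{v}^\infty(x)|\le \mathrm{C}|x|$ inherited from $\mathrm{S}(\rho)\le 2\mathrm{T}_k\rho$, vanishing at the origin, hence with at most linear growth. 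By Liouville-type rigidity for $g_\infty$-harmonic functions with the uniform ellipticity \eqref{UE} (a $C^{1,\alpha_0}$ Liouville theorem, cf. \cite{L}), a $g_\infty$-harmonic function with sublinear-plus-linear growth vanishing at a point and nonnegative must be... linear, in fact each $v_i^\infty$ is an affine function $\ell_i\cdot x$ with $\ell_i\ge0$ componentwise — but then $\mathbf{v}^\infty$ cannot have a genuine free boundary point at $0$ unless $\mathbf{v}^\infty\equiv0$ near $0$, contradicting either $\sup_{B_1}|\mathbf{v}^\infty|\ge c$ or the strong maximum principle applied to the non-degenerate components. Quantitatively, one shows $|\nabla\mathbf{v}^k(0)|$ is controlled by Corollary~\ref{MainCorollary} on balls $B_\delta$ contained in $\{|\mathbf{v}^k|>0\}$, and the $C^{1,\gamma}$-estimate of Theorem~\ref{gradHold} upgrades a.e.\ gradient convergence to local uniform convergence, so the limit affine profile has $|\nabla\mathbf{v}^\infty(0)|\le \mathrm{C}$, contradicting $\mathrm{T}_k\to\infty$ once one tracks that $|\nabla\mathbf{v}^k|$ near $0$ blows up. I expect the main obstacle to be the doubling step and its interaction with the free-boundary structure of the limit: one must carefully ensure the rescaled sequence retains a free boundary point while remaining bounded and non-degenerate, so that the limit profile is both nontrivial and globally $g_\infty$-harmonic — and then invoke the correct Liouville-type rigidity in the Orlicz setting (uniform ellipticity \eqref{UE} plus Lieberman's gradient bound) to force the affine contradiction. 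Handling the non-homogeneity of $\widehat G_k$ in the passage to the limit (ensuring $\widehat G_k\to G_\infty$ in the sense needed for Proposition~\ref{Prop4.2}(ii)) is the other delicate point, resolved by (S-1)--(S-4) and Theorem~\ref{Thm-Compactness-G_j}.
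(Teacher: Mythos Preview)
Your overall strategy---contradiction, blow-up, compactness via Proposition~\ref{Prop4.2} and Theorem~\ref{Thm-Compactness-G_j}, then a rigidity statement for the limiting $g$-harmonic profile---is the paper's strategy. The differences are in the normalization step and in the mechanism for the final contradiction, and your endgame as written has a genuine gap.

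\medskip
\noindent\textbf{Normalization.} The paper does not use a continuous doubling lemma. It reduces \eqref{Lips decay} to the dyadic statement
\[
\mathcal{S}(k+1,\mathbf{u})\le\max\left\{\frac{\widetilde{\mathrm{C}}\,\mathrm{M}_0}{2^{k+1}},\,\frac{\mathcal{S}(k,\mathbf{u})}{2}\right\},\qquad \mathcal{S}(k,\mathbf{u}):=\sup_{B_{2^{-k}}}|\mathbf{u}|,
\]
and assumes this fails along a sequence $(G_j,\mathbf{u}^j,k_j)$ with $k_j\to\infty$. The rescaling $\mathbf{v}^j(x)=\mathbf{u}^j(2^{-k_j}x)/\mathcal{S}(k_j+1,\mathbf{u}^j)$ then automatically satisfies $|\mathbf{v}^j|\le 2$ in $B_1$, $\mathbf{v}^j(\mathbf{0})=0$, and $\sup_{B_{1/2}}|\mathbf{v}^j|=1$. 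No doubling argument is needed, and the analysis stays on the fixed ball $B_1$.

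\medskip
\noindent\textbf{Contradiction.} The paper's contradiction is via Harnack, not Liouville: each limit component $v_i^\infty$ is nonnegative and $g$-harmonic in $B_{3/4}$, vanishes at the origin, and at least one component has $\sup_{B_{1/2}}v_i^\infty>0$; this violates Theorem~\ref{Harnack-Ineq} directly. No global limit and no Liouville-type classification is invoked.

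\medskip
\noindent\textbf{The gap in your proposal.} Your ``quantitative'' paragraph does not hold up. You propose to apply Corollary~\ref{MainCorollary} on balls $B_\delta\subset\{|\mathbf{v}^k|>0\}$ centered at $0$, but $0\in\mathfrak{F}(\mathbf{v}^k)$ is a free boundary point, so no such ball exists. The claim that $|\nabla\mathbf{v}^k|$ blows up near $0$ is also false: $\nabla\mathbf{v}^k=\nabla\mathbf{u}^k(\rho_k\,\cdot)/\mathrm{T}_k$, and the entire purpose of the rescaling is to make this bounded. If you want to close your argument via Liouville, the correct route is the one you nearly articulate just before that paragraph: Lieberman's $C^{1,\alpha}$ estimate combined with the scaling $v\mapsto v(R\,\cdot)/R$ (which preserves $g$-harmonicity) shows that an entire $g_\infty$-harmonic function with linear growth has constant gradient, hence $v_i^\infty(x)=\ell_i\cdot x$; since $v_i^\infty\ge 0$ on all of $\mathbb{R}^n$ this forces $\ell_i=0$, so $\mathbf{v}^\infty\equiv 0$, contradicting $\sup_{B_1}|\mathbf{v}^\infty|\ge c>0$. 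The remarks about $\mathbf{v}^\infty$ ``having a genuine free boundary point'' and about gradient blow-up are red herrings and should be dropped.
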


\begin{proof}
First, observe that, without loss of generality, we may assume that $x_0 = \mathbf{0}$.  Moreover, by combining discrete iterative techniques with continuous reasoning (see, for instance, \cite{CKS}), it is well established that proving estimate \eqref{Lips decay} is equivalent to verifying the existence of a universal constant $\tilde{\mathrm{C}}>1$ such that, for all $j\in \mathbb{N}$, the following holds
\begin{equation} \label{eq:22}
\mathcal{S}(k + 1, \mathbf{u}) \leq \max \left\{ \frac{\widetilde{\mathrm{C}} \mathrm{M}_0}{2^{k+1}}, \frac{\mathcal{S}(k, \mathbf{u})}{2} \right\},
\end{equation}
where $\displaystyle \mathcal{S}(k, \mathbf{u}) := \sup_{B_{2^{-k}}} |\mathbf{u}|$. Indeed, one can deduce inductively from \eqref{eq:22} that
\[
\mathcal{S}(k, u) \leq \widetilde{\mathrm{C}} \mathrm{M}_0 2^{-k}.
\]
Now, for any $r \in (0, 1]$, choose $k \geq 0$ such that $2^{-(k+1)} < r \leq 2^{-k}$. Hence,
\[
\| \mathbf{u} \|_{L^\infty(B_r)} \leq \| \mathbf{u} \|_{L^\infty(B_{2^{-k}})} = \mathcal{S}(k, \mathbf{u}) \leq \widetilde{\mathrm{C}} \mathrm{M}_0 2^{-k} \leq 2\widetilde{\mathrm{C}} \mathrm{M}_0 2^{-(k+1)} \leq 2\widetilde{\mathrm{C}} \mathrm{M}_0 r,
\]
which establishes the desired estimate.

It remains to prove \eqref{eq:22}. Suppose, by contradiction, that it fails. Then, for each $j \in \mathbb{N}$, there exist $\mathbf{u}^j$ and $G_j \in \mathcal{G}_{\nu}(\delta, g_0,\rho_0)$ and $0 \leq \lambda_j \leq \Lambda< \infty$ in $B_1$ such that %uj is a minimizer o
$\mathbf{u}^j$ is an almost-minimizer of $\mathcal{J}_{G_j}$ in $B_1$, i.e.,
$$
\mathcal{J}_{G_j} (\mathbf{u}^j; U) := \int_U \left(\sum_{i=1}^m G_j(|\nabla u^j_i|)+ \lambda_j \chi_{\{|\mathbf{u}^j| > 0\}}\right) \, dx, \quad \text{with} \quad G^{\prime}_j(t) = g_j(t),
$$
and there exist integers $k_j \to \infty$ as $j \to \infty$, such that
\begin{equation} \label{eq:23}
\mathcal{S}(k_j + 1, \mathbf{u}^j) > \max \left\{ \frac{j\mathrm{M}_0}{2^{k_j + 1}}, \frac{\mathcal{S}(k_j, \mathbf{u}^j)}{2} \right\}.
\end{equation}
Here, $\displaystyle \mathcal{S}(k_j, \mathbf{u}^j) := \sup_{B_{2^{-k_j}}} |\mathbf{u}^j|$ with $\| u_j \|_{L^\infty(B_1)} \leq \mathrm{M}_0$. Moreover, observe that the condition $\| \mathbf{u}^j \|_{L^\infty(B_1)} \leq \mathrm{M}_0$ indeed forces $k_j \geq \frac{\ln(j)}{\ln(2)} -1 \to \infty$ as $j \to \infty$.

Now, we define the rescaled auxiliary function
\[
\mathbf{v}^j(x) := \frac{\mathbf{u}^j(2^{-k_j} x)}{\mathcal{S}(k_j + 1, \mathbf{u}^j)} \quad \text{in } \quad B_{1},
\]
and set
\[
\sigma^{\ast}_j := \frac{2^{-k_j}}{\mathcal{S}(k_j + 1, \mathbf{u}^j)}.
\]
Thus, by \eqref{eq:23}, it follows that $\sigma^{\ast}_j \leq \frac{2}{j\mathrm{M}_0} \to 0$ as $j \to \infty$. Furthermore, once again by \eqref{eq:23} we have
\begin{itemize}
    \item[\checkmark] $0 \leq v^j_i \leq 2$ in $B_{1}$;
    \item[\checkmark] $v^j_i(\mathbf{0}) = 0$;
    \item[\checkmark] $\displaystyle \sup_{B_{1/2}} v^j_i = 1$.
\end{itemize}

Now, consider the rescaled energy functional
\[
\widehat{\mathcal{J}}_{\widehat{G}_j} (\mathbf{v}^j; U) := \int_U \left(\sum_{i=1}^m \widehat{G}_j(|\nabla v^j_i|) +  \widehat{\lambda}_j \chi_{\{|\mathbf{v}^j| > 0\}}\right) \, dx,
\]
where 
$$
\widehat{G}_j(t) \coloneqq G_{\sigma^{\ast}_j}(t) = \frac{G_j((\sigma^{\ast}_j)^{-1} t)}{(\sigma^{\ast}_j)^{-1} g_j((\sigma^{\ast}_j)^{-1})} \quad \text{and} \quad  \widehat{\lambda}_j  \coloneqq \frac{\lambda_j}{(\sigma^{\ast}_j)^{-1} g_j((\sigma^{\ast}_j)^{-1})}.
$$

Now, note that from Proposition \ref{prop:scaling} (Statement (S-2)) we have $\widehat{G}_j \in \mathcal{G}_{\nu}\big(\delta, g_0, (1+g_0)^{-1}\big)$. Moreover,  from Definition \ref{Def-N-function} (Statement (iii)), and Statement \ref{Statement} --Property $(g_3)$--, thus we conclude that $\displaystyle \lim_{j \to \infty} \widehat{\lambda}_j = 0$.

Moreover, since $\| \mathbf{v}^j \|_{L^\infty(B_1)} \leq 2$ by \eqref{eq:23}, it follows from Proposition \ref{Prop4.2} that $\mathbf{v}^j = (v^j_1, \dots, v^j_m)$ is an almost-minimizer of $\widehat{\mathcal{J}}_{\widehat{G}_j}$ with constant $\widehat{\kappa} = \kappa (2^{-k_j})^\beta$ and exponent $\widehat{\beta} = \beta$. 

Hence, passing to the limit as $j \to \infty$, the limiting function $\displaystyle v^\infty_i(x) := \lim_{j \to \infty} v^j_i(x)$ (for each $1\leq i \leq m$) satisfies in the weak sense (according Proposition \ref{Prop4.2} -  (i)-(ii), and  Theorem \ref{Thm-Compactness-G_j})
\[
\Delta_g v^\infty_i = \mbox{div} \left(g(|\nabla v^\infty_i|)\dfrac{\nabla v^\infty_i}{|\nabla v^\infty_i|}\right)=  0 \quad \text{in } B_{3/4} \quad \text{with} \quad g(t) = G^{\prime}(t), \,\,\,\text{and}\quad  \displaystyle G(t) = \lim_{j \to \infty} \widehat{G}_j(t).
\]
On the other hand, the following properties hold:
\begin{itemize}
    \item[\checkmark] $0 \leq v^\infty_i \leq 2$ in $B_{3/4}$;
    \item[\checkmark] $v^\infty_i(\mathbf{0}) = 0$;
    \item[\checkmark] $\displaystyle \sup_{B_{1/2}} v^\infty_i = 1$.
\end{itemize}
Finally, such properties contradict the homogeneous Harnack inequality for $g$-harmonic profiles (see e.g. Theorem \ref{Harnack-Ineq}), thereby completing the proof.
\end{proof}

\subsection{Lipschitz regularity: Proof of Theorem \ref{MainThm03}}

Finally, we are in a position to address the proof of Theorem \ref{MainThm03}.

\begin{proof}[{\bf Proof of Theorem \ref{MainThm03}}] Let $\mathbf{u}$ be an almost minimizer of $\mathcal{J}_G$ in $U$, with some constant $\kappa \leq \kappa_0$ and exponent $\beta$. Consider an open subset $\widetilde{U} \Subset U$, and let
\[
r_0 := \frac{1}{4} \min\left\{2, \operatorname{dist}(\widetilde{U}, \partial U)\right\}.
\]
Define the interior set
\[
U_{r_0} := \left\{x \in U : \operatorname{dist}(x, \partial U) \geq r_0 \right\}.
\]
By Theorem \ref{Holdereg}, we know that $\mathbf{u} \in C^{0, \eta}(U_{r_0}; \mathbb{R}^m)$ for some $\eta \in \left(0, \frac{\delta}{g_0}\right)$. Let
\[
\mathrm{M} := \|\mathbf{u}\|_{L^\infty(U_{r_0}; \mathbb{R}^m)}.
\]

For an arbitrary point $x_0 \in \widetilde{U} \cap \{|\mathbf{u}| > 0\}$, in order to estimate $|\nabla \mathbf{u}(x_0)|$, we distinguish two scenarios:

\medskip

\noindent\textbf{Case I:} If $\mathbf{d} := \operatorname{dist}(x_0, \mathfrak{F}(\mathbf{u})) \leq r_0$:

\smallskip

Choose a point $y_0 \in \partial B_{\mathbf{d}}(x_0) \cap \mathfrak{F}(\mathbf{u})$. Then, according to Proposition \ref{MainProp02}, for any $x \in B_d(x_0)$, it holds that
\[
|\mathbf{u}(x)| \leq \mathrm{C} \mathrm{M} |x - y_0| \leq 2\mathrm{C}\mathrm{M} \mathbf{d}.
\]
Note that $B_{2\mathbf{d}}(y_0) \subset U_{r_0}$, and therefore $|\mathbf{u}| \leq \mathrm{M}$ in $B_{2\mathbf{d}}(y_0)$. Consider the scaled function
\[
\mathbf{u}_{\mathbf{d}}(x) := \frac{\mathbf{u}(x_0 + \mathbf{d} x)}{\mathbf{d}},
\]
which is an almost minimizer of the functional
\[
\mathbf{v} \mapsto \int_{B_1} \left(\sum_{i=1}^m G|\nabla v_i|) + \lambda\chi_{\{|{\bf v}|>0\}}(x)\right) \, dx,
\]
with constant $\kappa_{\mathbf{d}}:=\kappa d^\beta$ and exponent $\beta$, in $B_1$. Furthermore, $|\mathbf{u}_{\mathbf{d}}| \leq 2\mathrm{C} \mathrm{M}$. By Corollary \ref{MainCorollary}, it follows that
\[
|\nabla \mathbf{u}(x_0)| = |\nabla \mathbf{u}_{\mathbf{d}}(\mathbf{0})| \leq \widetilde{\mathrm{C}},
\]
where $\widetilde{\mathrm{C}}>0$ depends only on $\delta, g_0, m, n, \kappa_0 r_0^\beta, \beta, G^{-1}(\lambda)$, and $\mathrm{C\cdot M}$.

\medskip

\noindent\textbf{Case II:} If $\mathbf{d} \geq r_0$:

\smallskip

Define the scaled function
\[
\mathbf{u}_{r_0}(x) := \frac{\mathbf{u}(x_0 + r_0 x)}{r_0}.
\]
Then, $\mathbf{u}_{r_0}$ is an almost minimizer of the same functional with constant $\kappa_{r_0}:=\kappa r_0^\beta$ and exponent $\beta$, in $B_1$. Moreover, it satisfies
\[
\|\mathbf{u}_{r_0}\|_{L^\infty(B_1)} \leq \frac{\mathrm{M}}{r_0}.
\]
Therefore,
\[
|\nabla \mathbf{u}(x_0)| = |\nabla \mathbf{u}_{r_0}(\mathbf{0})| \leq \overline{\mathrm{C}}_0,
\]
where $\overline{\mathrm{C}}_0>0$ depends only on $\delta, g_0, m, n, \kappa_0 r_0^\beta, \beta, G^{-1}(\lambda)$, and $\frac{\mathrm{M}}{r_0}$. This finishes the proof of the Theorem.

\end{proof}

\subsection*{Acknowledgments}

\hspace{0.4cm}  Pedro Fellype Pontes was partially supported by NSFC (W2433017), and BSH (2024-002378). J.V. da Silva has received partial support from CNPq-Brazil under Grant No. 307131/2022-0, FAEPEX-UNICAMP (Project No. 2441/23, Special Calls - PIND - Individual Projects, 03/2023), and Chamada CNPq/MCTI No. 10/2023 - Faixa B - Consolidated Research Groups under Grant No. 420014/2023-3. Minbo Yang was partially supported by  National Natural Science Foundation of China (12471114), and Natural Science Foundation of Zhejiang Province (LZ26A010002).

\end{document}